\newtheorem{thm}{Theorem}[section]
\newtheorem{lem}[thm]{Lemma}
\newtheorem{cor}[thm]{Corollary}
\newtheorem{prop}[thm]{Proposition}
\newtheorem{definition}[thm]{Definition}
\newtheorem{example}[thm]{Example}
\newtheorem{remark}[thm]{Remark}
\numberwithin{equation}{section}
\newcommand{\act}{\mathrm{act}}
\newcommand{\tru}{\mathrm{tru}}
\newcommand{\R}{\mathbb{R}}
\newcommand{\N}{\mathbb{N}}
\newcommand{\cA}{\mathcal{A}}                       
\newcommand{\im}{\mathrm{Im}}
\newcommand{\SH}{\operatorname{SH}}
\newcommand{\CF}{\operatorname{CF}}
\newcommand{\HF}{\operatorname{HF}}
\newcommand{\RFH}{\operatorname{RFH}}
\newcommand{\RFC}{\operatorname{RFC}}
\newcommand{\Spec}{\operatorname{Spec}}
\newcommand{\set}[1]{\left\{ #1\right\}}            
\newcommand{\norm}[1]{\left\lVert#1\right\rVert}    
\begin{document}

\title[Barcode entropy for Reeb flows]{Barcode Entropy for Reeb Flows on contact manifolds with Liouville fillings}

\author{Elijah Fender}
\address[Elijah Fender]{Center for Geometry and Physics\\ Institute for Basic Science (IBS)\\ Pohang 37673, Korea}
\email{fender@ibs.re.kr}

\author{Sangjin Lee}
\address[Sangjin Lee]{Center for Geometry and Physics\\ Institute for Basic Science (IBS)\\ Pohang 37673, Korea}
\email{sangjinlee@ibs.re.kr}

\author{Beomjun Sohn}
\address[Beomjun Sohn]{Seoul National University\\ Department of Mathematical Sciences\\ Research Institute in Mathematics\\ 08826 Seoul, South Korea}
\email{bum2220@snu.ac.kr}

\begin{abstract}
    We study the topological entropy of Reeb flows on contact manifolds with Liouville fillings. With the theory of persistence modules, we define $\SH$-barcode entropy from the symplectic homology of a filling. We prove that the $\SH$-barcode entropy is independent of the choice of the filling and that the barcode entropy provides a lower bound for the topological entropy of the Reeb flow.
\end{abstract}

\maketitle
\setcounter{tocdepth}{1}
\tableofcontents

\section{Introduction}
\label{section introduction}


{\em Persistence modules}, which originated in the study of topological data analysis \cite{Zomorodian-Carlsson05}, were introduced to the study of Hamiltonian dynamics by Polterovich and Shelukhin \cite{Polterovich-Shelukhin16}. After their work, persistence modules have been extensively used in symplectic and contact topology. 
For example, \cite{Usher-Zhang, Stevenson18, Zhang19, Chor-Meiwes21, Kislev-Shelukhin21, Allais22, Kawamoto22, Shelukhin22a, Allais23, Dietzsch23} used persistence modules constructed from various Floer homologies to study Hamiltonian dynamics on symplectic manifolds. In \cite{Stojisavljevic-Zhang21, Shelukhin22b, Shelukhin22c}, the authors studied persistence modules constructed from symplectic homologies of Liouville domains and their applications.
Moreover, \cite{Buhovsky-Humiliere-Seyfaddini21, jannaud21, joly21, LeRoux-Seyfaddini-Viterbo21} studied $C^0$-symplectic topology by defining persistence modules for Hamiltonian homeomorphisms on surfaces.

In the current paper, we will consider persistence modules coming from filtered symplectic homologies of Liouville domains to study the topological entropy of Reeb flows on their contact boundaries. Before stating our results, we begin by offering a brief background and motivation for our study.

Various works have provided examples of contact structures on each of which every Reeb flow has positive topological entropy.
For example, it is shown by Frauenfelder and Schlenk \cite{Frauenfelder-Schlenk06} and Macarini and Schlenk \cite{Macarini-Felix11} that every Reeb flow of the standard contact structure on the unit cotangent bundle of a certain Riemannian manifold, more precisely energy hyperbolic in the sense of \cite{Macarini-Felix11}, has positive topological entropy.
Let us recall that every contact form supporting the standard contact structure of a unit cotangent bundle can be realized as the standard contact form on the boundary of a fiberwise star-shaped domain in the cotangent bundle.
In this setting, the authors studied some exponential growth rate of filtered wrapped Floer homologies on fiberwise star-shaped domains in order to provide a lower bound for the topological entropy of the Reeb flows.
They showed that if this exponential growth rate is positive for one Reeb flow, then the same holds for every Reeb flow.
Using this observation, they proved the positivity of topological entropy for every Reeb flow of the standard contact structure on the unit cotangent bundle of an energy hyperbolic manifold. Similar or related works also appeared in \cite{Alves16a, Alves16b, Meiwes18, Alves-Meiwes19, Alves19, Alves-Colin-Honda19, Dahinden20, Dahinden21, Alves-Pirnapasov22}.



Recently, in order to study the topological entropy of Hamiltonian diffeomorphisms, {\c C\. inel\. i}, Ginzburg, and G\"urel \cite{Cineli-Ginzburg-Gurel21} introduced a Floer-theoretic entropy, called barcode entropy, from a perspective of persistence homology and Lagrangian Floer theory. Then, \cite{Cineli-Ginzburg-Gurel21} showed the barcode entropy bounded above by the topological entropy of Hamiltonian diffeomorphisms.
Extending this approach, Ginzburg, G\"urel, and Mazzucchelli \cite{Ginzburg-Gurel-Mazzucchelli22} defined barcode entropy of geodesic flows on unit-tangent bundles of Riemannian manifolds from the Morse homology of the energy functional on the free loop spaces.
Then, \cite{Ginzburg-Gurel-Mazzucchelli22} proved that the barcode entropy low-bounds the topological entropy of geodesic flows.

There are examples of contact structures that admit a Reeb flow with zero topological entropy, for example, the unit cotangent bundle of $S^2$.
On such contact structures, the lower bounds of the topological entropy of Reeb flows given by \cite{Frauenfelder-Schlenk06, Macarini-Felix11}, which is the exponential growth rate of filtered wrapped Floer homology, should be zero. However, it is known that every contact structure admits a Reeb flow with positive topological entropy.
This means that the exponential growth rate considered in \cite{Frauenfelder-Schlenk06, Macarini-Felix11} may not fully detect the topological entropy of such Reeb flows.

On the other hand, for the case of $S^2$, it is known that there are geodesic flows with positive barcode entropy from \cite[Theorem C]{Ginzburg-Gurel-Mazzucchelli22}. We note that the geodesic flows of Riemannian manifolds are Reeb flows on their unit cotangent bundles. In this example, the barcode entropy in \cite{Ginzburg-Gurel-Mazzucchelli22} may detect more information about the topological entropy of such Reeb flows. This example motivates us to generalize the notion of barcode entropy to Reeb flows.

In this paper, we define a barcode entropy for Reeb flows by generalizing the barcode entropy in \cite{Cineli-Ginzburg-Gurel21, Ginzburg-Gurel-Mazzucchelli22}. More precisely, we define a Floer-theoretic entropy of Liouville-fillable contact manifolds by applying the persistence module theory to filtered symplectic homology. We call the Floer-theoretic entropy {\em symplectic homology barcode entropy}, or simply {\em $\SH$-barcode entropy}. With the $\SH$-barcode entropy, we prove the following results:
\vspace{-2mm}
\begin{itemize}
	\item $\SH$-barcode entropy of a contact manifold is independent of the choice of the filling.
	\item $\SH$-barcode entropy provides a lower bound for the topological entropy of the Reeb flow.
\end{itemize}
\vspace{-2mm}
More detailed results will be given in the next subsection.

\subsection{Results}
Let $(Y,\alpha)$ be a closed non-degenerate contact manifold. Throughout this paper, we assume that $(Y,\alpha)$ can be realized as the boundary of a Liouivlle domain $(W, \lambda)$ with vanishing first Chern class.
We construct a persistence module from the filtered symplectic homology of $(W,\lambda)$.
Then, by the {\em Structure Theorem}, a well-known result in persistence module theory, we transform this persistence module into a {\em barcode}, which is a multiset of intervals. 
Each interval, an element of a barcode, is called a bar. We denote by $B_{\SH}(W, \lambda)$ the barcode constructed from the filtered symplectic homology of $(W,\lambda)$. We define {\em symplectic homology barcode entropy}, or simply {\em $\SH$-barcode entropy}, as the exponential growth rate of the number of not-too-short bars in this barcode under the action filtration level.

Because of its definition, $\SH$-barcode entropy seems to be dependent on the choice of the filling. When a contact manifold admits two different Liouville fillings with vanishing first Chern classes, the corresponding two symplectic homologies can differ greatly. For example, the symplectic homology of a 2-dimensional disc vanishes, while the symplectic homology of a higher genus surface with a single boundary is infinite-dimensional. Even if two different Liouville fillings have the same symplectic homology, the respective barcodes can be considerably different. Nevertheless, we show that the $\SH$-barcode entropy depends only on the contact manifold $(Y,\alpha)$. More precisely,
\begin{thm}[= Theorem \ref{invariance: main thm}]
	\label{thm independence of filling}
    Let $(Y,\alpha)$ be a closed non-degenerate contact manifold with Liouville fillings  $(W_1, \lambda_1)$ and $(W_2, \lambda_2)$ whose first Chern classes vanish. Then, the $\SH$-barcode entropy of $(Y,\alpha)$ defined by the symplectic homology of $(W_1, \lambda_1)$ and that of $(W_2, \lambda_2)$ coincides.
\end{thm}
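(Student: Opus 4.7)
The plan is to reduce the filling-invariance to a statement about the filtered \emph{positive} symplectic homology $\SH_+$, which by construction is generated by Reeb orbits of $(Y,\alpha)$ and is therefore expected to depend only on $(Y,\alpha)$. Recall that, in the standard admissible-Hamiltonian model, for every action level $a$ outside the action spectrum of $(Y,\alpha)$ there is a natural long exact sequence
\[
\cdots \to H_{*+n}(W,\partial W) \to \SH^a_*(W,\lambda) \to (\SH_+)^a_*(W,\lambda) \to H_{*+n-1}(W,\partial W) \to \cdots,
\]
in which the contribution of $H_*(W,\partial W)$ to the filtered complex lies in a bounded action window near the origin, determined by the Morse complex of the filling.

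The first step is to compare the barcodes $B_{\SH}(W,\lambda)$ and $B_{\SH_+}(W,\lambda)$. Combining the Structure Theorem with the long exact sequence above, the two barcodes agree as multisets outside a bounded action window depending only on $(W,\lambda)$; equivalently, they differ by at most finitely many bars, each with both endpoints lying in a bounded interval. Since the $\SH$-barcode entropy is defined as the exponential growth rate, as the action level $a \to \infty$, of the number of bars of length at least $\epsilon$ surviving past $a$, adjusting or deleting finitely many bars with bounded endpoints cannot affect this growth rate. Hence the barcode entropy computed from $\SH(W,\lambda)$ coincides with that computed from $\SH_+(W,\lambda)$.

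The second step is to establish that, for any two Liouville fillings $(W_1,\lambda_1)$ and $(W_2,\lambda_2)$ of $(Y,\alpha)$ with vanishing first Chern class, the filtered persistence modules $\{\SH_+^a(W_i,\lambda_i)\}_{a\in\R}$ are isomorphic. The underlying Floer complexes are generated by Reeb orbits on $(Y,\alpha)$ of action at most $a$, and a neck-stretching argument (or a Viterbo-type transfer combined with the integrated maximum principle) should confine the relevant Floer cylinders to a neighborhood of the contact boundary. This should yield chain-level transfer maps in both directions which respect the action filtration up to a uniform shift and which intertwine the continuation maps defining the persistence module structure. The resulting isomorphism of filtered persistence modules, combined with the previous step, gives the theorem.

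The hard part is carrying out the second step at the \emph{filtered} persistence-module level, rather than only at the level of total homology groups. The standard filling-invariance results for positive symplectic homology are typically stated as isomorphisms of graded groups; here one must upgrade them to isomorphisms of persistence modules, i.e.\ produce chain-level transfer maps that commute with continuation maps and shift the action filtration by at most a uniform constant. In particular, one must verify that all auxiliary data (Hamiltonians, almost complex structures, and the interpolating homotopy between the two fillings) can be arranged so as not to distort the action filtration beyond a bounded shift, since uniform action shifts do not change exponential growth rates and hence do not affect the $\SH$-barcode entropy.
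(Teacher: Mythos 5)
Your Step 2 contains a genuine gap, and it is the heart of the matter. You claim that the filtered persistence modules $\{\SH_+^a(W_i,\lambda_i)\}_{a\in\R}$ of two fillings are isomorphic (up to a uniform action shift), to be proved by neck-stretching or a Viterbo-type transfer. This is false in general: although the generators of the positive complex are Reeb orbits of $(Y,\alpha)$, the Floer cylinders computing its differential and its continuation maps may dip arbitrarily deep into the filling, and they genuinely depend on it. The paper's own motivating example already rules your claim out: $\SH$ of the disc vanishes while $\SH$ of a genus-$g$ surface with one boundary component is infinite dimensional, so by the tautological exact triangle with $H^*(W)$ the positive theories (and a fortiori their barcodes) differ as well. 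Moreover, for two fillings of the same boundary there is no inclusion of Liouville domains, so there are no Viterbo transfer maps ``in both directions''; and neck-stretching only controls curves whose energy is below the Gromov-monotonicity threshold of the collar. This is exactly how the paper proceeds: Lemma \ref{Invariance: Gromov monotonicity} gives a constant $C(Y,\alpha)$ such that any holomorphic curve crossing the collar $[\tfrac12,1]\times Y$ has energy at least $C$, Lemma \ref{Invariance: Crossing Energy} upgrades this to Floer cylinders between non-constant orbits for small Morse perturbations, and Lemma \ref{invariance: short bars are preserved General} (a non-Archimedean linear-algebra statement) converts the resulting agreement of low-energy differentials into Proposition \ref{invariance: short bars SH vers}: only the bars of length less than $C(Y,\alpha)$ with positive left endpoint are shown to coincide, not the whole persistence module.

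The second missing idea is how to handle the long bars, which your proposal implicitly assumes also agree. The paper never proves that; instead it counts. By Lemma \ref{lem convex radial}, every bar endpoint sits at a (reparametrized) Reeb period, there are exactly $\dim H^*(W_j)$ bars with left endpoint $0$, and the total number of endpoints at level $s_h(t)$ is twice the number of closed Reeb orbits of period $t$. Hence, for fixed $\epsilon<C(Y,\alpha)$ and action window $T$, the count of $\epsilon$-long bars is pinned down by the filling-independent quantities (number of Reeb orbits of period $<T$, number of short bars from Proposition \ref{invariance: short bars SH vers}) up to the bounded additive error $\dim H^*(W_j)$, which cannot change an exponential growth rate. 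Your Step 1 (passing between $\SH$ and $\SH_+$ barcode entropies via the exact triangle with $H^*(W)$) is sound and is in fact carried out in the paper's Section \ref{section comparison of barcode entropy} using the bar-counting inequality for exact sequences of moderate persistence modules, though your stronger claim that the two barcodes ``agree as multisets outside a bounded window'' does not follow from the long exact sequence alone; only the count of $\epsilon$-long bars is controlled, up to $\dim H^*(W)$. But without replacing Step 2 by a crossing-energy argument for short bars plus the endpoint-counting argument for long bars, the proof does not go through.
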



The key ingredient of the proof is the existence of a constant $C=C(Y,\alpha)$>0 with
\[B_{\SH}(W_1,\lambda_1)_{+}^{\le C}=B_{\SH}(W_2,\lambda_2)_{+}^{\le C},\]
where $B_{\SH}(W, \lambda)^{\le C}_+$ denotes the multiset of bars with length at most $C$ and with positive left-endpoint in the barcode constructed from the symplectic homology of a Liouville domain $(W,\lambda)$. The constant $C(Y,\alpha)$ only depends on the contact boundary and is independent of the fillings. Using this observation, we will show that the exponential growth rates from two barcodes $B_{\SH}(W_1, \lambda_1), B_{\SH}(W_2, \lambda_2)$ are the same. For a complete proof, see Section \ref{subsection fillings}.

Since the definition of $\SH$-barcode entropy is motivated by \cite{Cineli-Ginzburg-Gurel21, Ginzburg-Gurel-Mazzucchelli22}, it is natural to expect that the results obtained in those papers can be extended to the setting of $\SH$-barcode entropy. Here, we prove the following Theorem which is an analogue of \cite[Theorem A]{Cineli-Ginzburg-Gurel21} and \cite[Theorem A]{Ginzburg-Gurel-Mazzucchelli22}.

\begin{thm}[=Theorem \ref{thm vs topological entropy formal}]
    \label{thm vs topological entropy} 
    Let $(Y,\alpha)$ be a closed non-degenerate contact manifold with some Liouville filling whose first Chern class vanishes. Then, the $\SH$-barcode entropy of $(Y,\alpha)$ is less than or equal to the topological entropy of the Reeb flow of $(Y,\alpha)$.
\end{thm}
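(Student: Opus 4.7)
The proof will follow the strategy pioneered by {\c C\. inel\. i}--Ginzburg--G\"urel in \cite{Cineli-Ginzburg-Gurel21} for Hamiltonian diffeomorphisms and adapted to autonomous flows by Ginzburg--G\"urel--Mazzucchelli in \cite{Ginzburg-Gurel-Mazzucchelli22}. The central idea is to bound the count of sufficiently long bars in $B_{\SH}(W,\lambda)_{+}^{\leq a}$, as a function of the action window $a$, by a count of $\epsilon$-robust Lagrangian intersection points, and then to control this intersection count by a volume-growth estimate via a Crofton-type inequality together with Yomdin's theorem.

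More concretely, I would fix $\epsilon>0$ and $a\gg 1$ and choose a $C^{2}$-small non-degenerate admissible Hamiltonian $H_{a}$ on $\widehat{W}$ which is linear of slope $a$ on the cylindrical end. Its $1$-periodic orbits in positive action correspond, up to a bounded (in $a$) correction from critical points in $W$, to closed Reeb orbits of $(Y,\alpha)$ of period at most $a$, and they compute $B_{\SH}(W,\lambda)_{+}^{\leq a}$. Following \cite{Cineli-Ginzburg-Gurel21}, I would then reinterpret the $1$-periodic orbits as intersection points $\Delta\cap\Gamma_{\phi_{a}}$ of the diagonal with the graph of the time-$1$ map in $(\widehat{W}\times\widehat{W},-\omega\oplus\omega)$, and observe that any bar of length at least $\epsilon$ forces an \emph{$\epsilon$-robust} intersection point---one that persists under all sufficiently small Hamiltonian perturbations. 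A Crofton-type inequality in a Weinstein neighborhood of $\Delta$ bounds the number of such robust intersections, inside a fixed compact piece $K\subset \widehat{W}\times\widehat{W}$, by a constant $C(\epsilon)$ times the intrinsic volume of $\Gamma_{\phi_{a}}\cap K$. Since $\Gamma_{\phi_{a}}$ is essentially swept out by the Reeb flow of $(Y,\alpha)$ over time $a$, this volume is comparable to $\mathrm{Vol}\bigl(\phi^{a}_{\mathrm{Reeb}}(\Sigma)\bigr)$ for a fixed hypersurface $\Sigma\subset Y$. Applying Yomdin's theorem to the smooth flow $\phi^{t}_{\mathrm{Reeb}}$ yields
\[
\limsup_{a\to\infty}\frac{1}{a}\log\mathrm{Vol}\bigl(\phi^{a}_{\mathrm{Reeb}}(\Sigma)\bigr)\leq h_{\mathrm{top}}(\phi_{\mathrm{Reeb}}),
\]
and combining the three estimates will give
\[
\#\bigl\{\text{bars of length}\geq \epsilon\text{ in }B_{\SH}(W,\lambda)_{+}^{\leq a}\bigr\}\leq C(\epsilon)\,e^{(h_{\mathrm{top}}(\phi_{\mathrm{Reeb}})+\epsilon)\,a}.
\]
Taking exponential growth rates in $a$ and then letting $\epsilon\to 0$ will deliver the theorem.

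The main obstacle, I expect, will be to make the Crofton and Yomdin steps interact properly with the non-compact geometry of $\widehat{W}$. In the compact Hamiltonian setting of \cite{Cineli-Ginzburg-Gurel21} the relevant submanifolds live in a fixed manifold and are iterated $k$ times; here the role of ``time'' is played by the slope $a$, and the Reeb orbits responsible for the generators of $\CF^{\leq a}(H_{a})$ sit deeper and deeper into the symplectization as $a$ grows. I therefore need to choose an $a$-dependent compact chart large enough to capture all relevant orbits, control the intrinsic geometry of $\Delta$ and $\Gamma_{\phi_{a}}$ inside it with constants that are uniform in $a$, and verify that the Yomdin bound can be applied with uniform constants over this growing family so as not to spoil the exponential rate. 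A secondary, more routine, point will be to check that the truncation to bars with positive left endpoint cleanly isolates the Reeb-orbit contribution from the Morse part supported in $W$---the same technical input used in Theorem~\ref{thm independence of filling}.
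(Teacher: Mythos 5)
Your proposal follows essentially the same route as the paper's proof: bars of length at least $\epsilon$ are converted into robust intersection points of the diagonal with the graph of the time-one map via a Lagrangian tomograph, these are counted against the volume of the graph by a Crofton-type inequality, and the exponential volume growth is bounded by the topological entropy via Yomdin's theorem. The only point worth correcting concerns your anticipated ``main obstacle'': for convex radial Hamiltonians whose bending region $\partial W\times[1,r_0]$ is fixed, the relevant $1$-periodic orbits do \emph{not} sit deeper in the symplectization as the slope $a$ grows --- they all lie in the fixed compact set $W_{r_0}$ (and move toward $r=1$ as the slope increases), and an action estimate (Lemma~\ref{lem energy bound}) confines the relevant intersection points to a fixed $W_{r_1-\delta}$, so the Crofton and Yomdin constants are uniform in $a$ and no $a$-dependent chart is required.
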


The proof of Theorem \ref{thm vs topological entropy} in this paper follows the same lines as in \cite{Cineli-Ginzburg-Gurel21, Ginzburg-Gurel-Mazzucchelli22}. Recall that a symplectic homology can be derived as the direct limit of Hamiltonian Floer homologies of a sequence of Hamiltonian functions $\set{H_i}_{i\in \N}$. We understand the generators of Hamiltonian Floer homology of $H_i$ as the intersections of two Lagrangian submanifolds,
\[\text{the diagonal  } \Delta_W := \{(p,p) \in W \times W | p \in W\} \text{  and the graph  } \Gamma_{H_i}:= \left\{\left(p,\phi_{H_i}(p)\right) | p \in W\right\},\]
in the product space $W\times W$, where $\phi_{H_i}$ is the Hamiltonian diffeomorphism generated by the Hamiltonian function ${H_i}$.

Using the concept of {\em Lagrangian tomograph} from \cite{Cineli-Ginzburg-Gurel21, Cineli-Ginzburg-Gurel22a}, along with the {\em Crofton inequality}, we relate the number of not-so-short bars with the volume of the graph $\Gamma_{H_i}$. Finally, with Yomdin Theorem \cite{Yomdin}, we show the exponential growth rate of volume of the graphs of $H_i$ is at most the topological entropy. This finishes the proof of Theorem \ref{thm vs topological entropy}. For details, see Section \ref{section SH barcode entropy vs the topological entropy of the Reeb flow}.

\subsection{Further directions} 
In this subsection, we briefly describe possible future directions. 

One possible direction for further research is to give a lower bound of the $\SH$-barcode entropy as an analogue of Theorem B of \cite{Cineli-Ginzburg-Gurel21, Ginzburg-Gurel-Mazzucchelli22}. More precisely, the authors that the barcode entropy is at least equal to the topological entropy of the flow restricted to a compact invariant hyperbolic set for Hamiltonian diffeomorphisms and geodesic flows respectively. The proof relies crossing energy theorem \cite[Theorem 6.1]{Ginzburg-Gurel18} and \cite[Proposition 6.6]{Ginzburg-Gurel-Mazzucchelli22}. If one can prove the Reeb-version of the crossing energy theorem, it will be possible to show that the barcode entropy of a given Reeb flow is at least the topological entropy of the flow restricted to a compact invariant hyperbolic set.




Another possible direction is to study (semi-)continuity properties of the $\SH$-barcode entropy. For the topological entropy, Newhouse \cite{Newhouse89} proved that it is upper semi-continuous on spaces of smooth flows with respect to the $C^\infty$-topology. This result directly implies the upper semi-continuity of the topological entropy of Reeb flows with respect to the $C^\infty$-topology. However, for the barcode entropy, the upper semi-continuity is even unclear on the space of contact forms with respect to the $C^\infty$-topology.

Lastly, one can ask whether the $\SH$-barcode entropy is unbounded above on the space of contact forms with contact volume one. In \cite{Abbondandolo-Alves-Saglam-Schelnk21}, Abbondandolo, Alves, Sa{\u g}lam, and Schlenk proved that for any positive real number $r$, there exists a contact form such that the topological entropy of the corresponding Reeb flow is $r$ and the contact volume is one. Together with Theorem \ref{thm vs topological entropy}, the barcode entropy can be arbitrarily small even with the fixed volume. However, it is yet unclear whether barcode entropy can be also arbitrarily large.

\subsection{Acknowledgment}
We appreciate Viktor Ginzburg for explaining key ideas of \cite{Cineli-Ginzburg-Gurel21} in a seminar talk and a personal conversation. The second and third named authors thank Jungsoo Kang and Otto van Koert for suggestions and valuable comments on the first draft. third named author thanks Matthias Meiwes for helpful discussions and his encouragement.

During this work, the first and second named authors were supported by the Institute for Basic Science (IBS-R003-D1). The research of third named author was supported by
Samsung Science and Technology Foundation SSTF-BA1801-01 and
National Research Foundation of Korea grant NRF-2020R1A5A1016126 and RS-2023-00211186.

\section{Preliminary: Persistence module} 
\label{section persistent homology}
In this section, we review basic definitions/facts about persistence modules.
We will review something we need in the later part of the paper, but there are many references in which a reader can find more details.
For example, see \cite{Bauer-Lesnick15, Bubenik-Vergili18, Polterovich-Rosen-Samvelyan-Zhang, Usher-Zhang, Polterovich-Shelukhin-Stojisavljevic17}, etc. 

\subsection{Non-Archimedean normed vector space}
\label{subsection non-Archimedean normed vector space}

In Section \ref{subsection non-Archimedean normed vector space}, we focus on the notion of {\em non-Archimedean vector space} and then explain its basic notions/properties.
For detailed explanations, the reader can refer to \cite[Sections 2 and 3]{Usher-Zhang}.

Throughout Section \ref{section persistent homology}, we fix a field $\mathbb{k}$ and consider the vector spaces over $\mathbb{k}$. 
\begin{definition}
	\label{def non-Archimedean vector space} 
	\mbox{}
	\begin{enumerate}
		\item 	A {\bf valuation} $\nu$ on $\mathbb{k}$ is a function $\nu : \mathbb{k} \to \mathbb{R} \cup \{\infty\}$ such that 
		\begin{enumerate}
			\item[(V1)] $\nu(a) = \infty$ if and only if $a =0$, 
			\item[(V2)] for any $a, b \in \mathbb{k}$, $\nu(ab) = \nu(a) + \nu(b)$, and
			\item[(V3)] for any $a, b \in \mathbb{k}$, $\nu(a+b) \geq \min \{\nu(a), \nu(b)\}$.
		\end{enumerate}
		\item A valuation $\nu$ is {\bf trivial} if $\nu(a)=0$ for all nonzero $a \in \mathbb{k}$.
		\item Let $\mathbb{k}$ be a field equipped with a valuation $\nu$.
		A {\bf non-Archimedean normed vector space} over $\mathbb{k}$ is a pair $(V,\ell: V \to \mathbb{R}\cup\{-\infty\})$ of a $\mathbb{k}$-vector space $V$ and a function $\ell$ on $V$ such that
		\begin{enumerate}
			\item[(F1)] $\ell(x) = \infty$ if and only if $x =0$, 
			\item[(F2)] for any $a \in \mathbb{k}$ and $x \in V$, $\ell(ax) = \ell(x) - \nu(a)$, and 
			\item[(F3)] for any $x, y \in V$, $\ell(x+y) \leq \max \{\ell(x), \ell(y)\}$.
		\end{enumerate}
	\end{enumerate}
\end{definition}
\begin{remark}
	In the current paper, we assume that every field is equipped with the trivial valuation if we do not specify another valuation.
\end{remark}

Now, we define the notion of {\em orthogonality} on a non-Archimedean vector space.
\begin{definition}
	\label{def orthogonality}
	Let $(V, \ell)$ be a non-Archimedean vector space over $\mathbb{k}$. 
	\begin{enumerate}
		\item A finite ordered collection $\left(v_1, \dots, v_r\right)$ of elements of $V$ is said to be {\bf orthogonal} if, for all $a_1, \dots, a_r \in \mathbb{k}$, the following holds:
		\[\ell\left(\sum_{i=1}^r a_i v_i\right) = \max\left\{\ell(a_iv_i) | i = 1, \dots, r \right\}.\]
		\item An {\bf orthogonalizable space} $(V,\ell)$ is a finite-dimensional non-Archimedean normed vector space $(V,\ell)$ over $\mathbb{k}$ such that there exists an orthogonal basis for $V$.
	\end{enumerate}
\end{definition}

We will use Lemmas \ref{lem svd} and \ref{lem one to one map} in later sections. 
It could be a good idea to skip Lemmas \ref{lem svd},\ref{lem one to one map} and their proofs for the first read and come back when it is needed. 

\begin{lem}[Theorem 3.4 of \cite{Usher-Zhang}]
	\label{lem svd}
	Let $(C, \ell_C)$ and $(D, \ell_D)$ be orthogonalizable $\mathbb{k}$-spaces and let $A: C \to D$ be a linear map with rank $r$. 
	Then, $C$ (resp.\ $D$) admits an orthogonal basis $\{v_1, \dots, v_n\}$ (resp.\ $\{y_1, \dots, y_m\}$) such that 
	\begin{enumerate}
		\item $\{v_{r+1}, \dots, v_n\}$ is an orthogonal basis for $\ker A$, 
		\item $\{w_1, \dots, w_r\}$ is an orthogonal basis for $\mathrm{Im} A$, 
		\item $Av_i = w_i$ for all $i =1, \dots, r$, and
		\item $\ell_C(v_1) - \ell_D(w_1) \leq \dots \leq \ell_C(v_r) - \ell_D(w_r)$.
	\end{enumerate}
\end{lem}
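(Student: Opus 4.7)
The approach will be a non-Archimedean analog of the classical singular value decomposition, proved by induction on the rank $r$. The main quantity is $\mu(v) := \ell_C(v) - \ell_D(Av)$, defined for $v \in C \setminus \ker A$. By axioms (V2) and (F2), $\mu$ is invariant under scaling by nonzero elements of $\mathbb{k}$, so it descends to a well-defined function on the projectivization of $C \setminus \ker A$. At each stage of the induction, I extract a vector $v_i$ achieving the minimum of $\mu$ on an appropriate subspace, arranging so that the sequence $\mu(v_1), \ldots, \mu(v_r)$ is non-decreasing --- this is precisely condition (4).

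The base case $r = 0$ is trivial: $A$ is the zero map, so any orthogonal bases of $C$ and $D$ will do. For the inductive step, the first task is to verify that $\mu$ attains its minimum. Fixing orthogonal bases $\{e_j\}$ of $C$ and $\{f_k\}$ of $D$ and expanding $v = \sum_j a_j e_j$, the ultrametric axiom (F3) combined with orthogonality yields $\ell_C(v) = \max_j(\ell_C(e_j) - \nu(a_j))$, and a parallel formula computes $\ell_D(Av)$ using $\{f_k\}$. The value of $\mu$ depends only on the pair of ``leading index sets'' realizing these maxima, which take only finitely many combinatorial configurations; hence a minimum exists. Let $v_1$ achieve it and set $w_1 := A v_1$.

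Next I would construct orthogonal decompositions $C = \mathbb{k} v_1 \oplus C'$ and $D = \mathbb{k} w_1 \oplus D'$ such that $\ker A \subseteq C'$ and $A(C') \subseteq D'$, so that $A' := A|_{C'} : C' \to D'$ is a linear map between orthogonalizable spaces of rank $r-1$. The existence of an orthogonal complement $C'$ containing $\ker A$ is the standard extension-to-orthogonal-basis property of orthogonalizable spaces; on the $D$-side, given any orthogonal complement $\tilde D'$ to $\mathbb{k} w_1$, one modifies each basis vector of $\tilde D'$ by an appropriate multiple of $w_1$ to produce $D'$ satisfying $A(C') \subseteq D'$. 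Induction applied to $A'$ yields the remaining vectors $v_2, \ldots, v_n$ and $w_2, \ldots, w_m$. Condition (4) follows because each inductively chosen $v_i$ ($i \leq r$) lies in $C' \subseteq C \setminus \ker A$, so $\mu(v_i) \geq \mu(v_1)$ by the global minimality of $\mu(v_1)$; iterating the argument at each level of the induction yields the full chain of inequalities.

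The main obstacle I anticipate is arranging simultaneous compatibility of the two orthogonal splittings: that $C'$ contains $\ker A$, that $D'$ is orthogonal to $\mathbb{k} w_1$, and that $A$ maps $C'$ into $D'$ --- all at once. Because non-Archimedean orthogonal complements are highly non-unique, this requires combining the extremality of $v_1$ with a careful ultrametric analog of Gram--Schmidt orthogonalization; in particular, one must verify that the corrections used to enforce $A(C') \subseteq D'$ do not destroy orthogonality of the chosen bases, a point that rests on the ultrametric inequality (F3). Once this compatibility is secured, the remainder of the argument unwinds as a routine induction.
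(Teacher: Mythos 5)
Your overall strategy---induction on the rank, extracting at each stage a vector minimizing $\mu(v)=\ell_C(v)-\ell_D(Av)$---is viable, and note for context that the paper itself gives no argument for this lemma but defers to Usher--Zhang. (The existence of the minimizer is unproblematic here because the valuation is trivial, so $\ell_C$ and $\ell_D$ take only finitely many values on nonzero vectors; your ``leading index set'' reasoning would need more care over a nontrivially valued field, but that is not the main issue.) The genuine gap is in the \emph{order} of your two splittings. After fixing $v_1$ and an \emph{arbitrary} orthogonal complement $C'$ of $\mathbb{k}v_1$ containing $\ker A$, there may be no orthogonal complement $D'$ of $\mathbb{k}w_1$ with $A(C')\subseteq D'$ at all, so no correction of $\tilde D'$ by multiples of $w_1$ can succeed. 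Concretely, over $\mathbb{k}=\mathbb{Z}_2$ let $C$ have orthogonal basis $\{e_1,e_2\}$ with $\ell_C(e_1)=0$, $\ell_C(e_2)=1$, let $D$ have orthogonal basis $\{f_1,f_2\}$ with $\ell_D(f_1)=0$, $\ell_D(f_2)=-1$, and set $Ae_1=f_1$, $Ae_2=f_1+f_2$ (rank $2$, $\ker A=0$). Then $\mu(e_1)=0$, $\mu(e_2)=1$, $\mu(e_1+e_2)=2$, so $v_1=e_1$, $w_1=f_1$. The subspace $C'=\operatorname{span}(e_2)$ is a perfectly legitimate orthogonal complement of $\mathbb{k}v_1$ containing $\ker A$, but $A(C')=\operatorname{span}(f_1+f_2)$, and the only complement of $\mathbb{k}w_1$ containing it is $\operatorname{span}(f_1+f_2)$ itself, which is \emph{not} orthogonal to $w_1$ since $\ell_D\bigl(f_1+(f_1+f_2)\bigr)=\ell_D(f_2)=-1<0=\max\{\ell_D(f_1),\ell_D(f_1+f_2)\}$. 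So the inductive step, as you have set it up, cannot be completed; the good complement here is $\operatorname{span}(e_1+e_2)$, which your procedure does not single out.

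The repair is to reverse the order and correct on the $C$-side rather than the $D$-side: choose \emph{any} orthogonal complement $D'$ of $\mathbb{k}w_1$ in $D$, set $C':=A^{-1}(D')$, which automatically contains $\ker A$ and satisfies $A(C')\subseteq D'$, and then use minimality of $\mu(v_1)$ to show that $C=\mathbb{k}v_1\oplus C'$ is an orthogonal splitting. Indeed, for $c'\in C'$ one has $A(v_1+c')=w_1+Ac'\neq 0$ and $\ell_D(w_1+Ac')=\max\{\ell_D(w_1),\ell_D(Ac')\}\ge\ell_D(w_1)$ because $Ac'\in D'$ and $D=\mathbb{k}w_1\oplus D'$ is orthogonal; combined with $\mu(v_1+c')\ge\mu(v_1)$ this gives $\ell_C(v_1+c')\ge\ell_C(v_1)$, and the ultrametric inequality then forces $\ell_C(av_1+c')=\max\{\ell_C(av_1),\ell_C(c')\}$. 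Induction applied to $A|_{C'}\colon C'\to D'$ (rank $r-1$) finishes the argument, and condition (4) follows exactly as you say from global minimality of $\mu(v_1)$. Note also that even your unrepaired claim that an orthogonal complement $C'\supseteq\ker A$ of $\mathbb{k}v_1$ exists is not the bare ``extension to an orthogonal basis'' property: one must first check that $\{v_1\}$ together with an orthogonal basis of $\ker A$ is an orthogonal set, which again uses the extremality of $v_1$ (via $\ell_C(v_1+k)\ge\ell_C(v_1)$ for $k\in\ker A$) and not just axiom (F3).
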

\begin{proof}
	See the proof of \cite[Theorem 3.4]{Usher-Zhang}. We omit the proof.
\end{proof}

\begin{lem}
	\label{lem one to one map}
	Let $(V, \ell)$ be a finite-dimensional non-Archimedean normed vector space with two different orthogonal bases 
	\[\{v_1, \dots, v_n\} \text{  and  } \{w_1, \dots, w_n\}.\]
	If $\ell(v_1) \leq \ell(v_2) \leq \dots \leq \ell(v_n)$ and $\ell(w_1) \leq \ell(w_2) \leq \dots \leq \ell(w_n)$, then 
	\[\ell(v_i) = \ell(w_i),\]
	for all $i =1, \dots, n$. 
	In particular, there is a one-to-one map between two bases preserving the non-Archimedean norm. 
\end{lem}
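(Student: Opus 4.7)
The plan is to exploit the filtration by sublevel sets. Define
\[
V^{\le c} := \{x \in V : \ell(x) \le c\}, \qquad c \in \mathbb{R}.
\]
First I would verify that each $V^{\le c}$ is a $\mathbb{k}$-linear subspace of $V$. Closure under addition is immediate from (F3), and closure under scalar multiplication follows from (F2) together with the standing assumption that $\mathbb{k}$ carries the trivial valuation, so that $\ell(ax) = \ell(x)$ for every nonzero $a \in \mathbb{k}$. Thus $\{V^{\le c}\}_{c \in \mathbb{R}}$ is an exhaustive increasing filtration of $V$ by subspaces.

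The next step, which is the heart of the argument, is to read off $\dim V^{\le c}$ from any ordered orthogonal basis. Suppose $\{u_1, \dots, u_n\}$ is an orthogonal basis with $\ell(u_1) \le \cdots \le \ell(u_n)$, and write $x = \sum_{i=1}^n a_i u_i$. By orthogonality and triviality of the valuation,
\[
\ell(x) \;=\; \max\bigl\{\ell(u_i) : a_i \ne 0\bigr\}.
\]
Hence $x \in V^{\le c}$ if and only if $a_i = 0$ for every $i$ with $\ell(u_i) > c$. Equivalently, the vectors $\{u_i : \ell(u_i) \le c\}$ form a basis of $V^{\le c}$, and therefore
\[
\dim_{\mathbb{k}} V^{\le c} \;=\; \#\{i : \ell(u_i) \le c\}.
\]

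Applying this identity to both of the given orthogonal bases $\{v_1, \dots, v_n\}$ and $\{w_1, \dots, w_n\}$, we obtain
\[
\#\{i : \ell(v_i) \le c\} \;=\; \dim_{\mathbb{k}} V^{\le c} \;=\; \#\{i : \ell(w_i) \le c\}
\]
for every $c \in \mathbb{R}$. This means the multisets $\{\ell(v_1), \dots, \ell(v_n)\}$ and $\{\ell(w_1), \dots, \ell(w_n)\}$ have identical counting functions, so they coincide as multisets. Since both sequences $\ell(v_i)$ and $\ell(w_i)$ are nondecreasing in $i$, the equality $\ell(v_i) = \ell(w_i)$ follows for every $i = 1, \dots, n$. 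The final clause of the lemma is then immediate: sending $v_i \mapsto w_i$ is a bijection between the two bases that preserves $\ell$. No step here looks delicate; the only point worth double-checking is that sublevel sets are genuinely $\mathbb{k}$-subspaces, which is exactly where the trivial-valuation convention is used.
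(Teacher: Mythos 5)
Your proof is correct and follows essentially the same route as the paper: both arguments observe that the sublevel-set filtration of $(V,\ell)$ is basis-independent and that its dimensions count basis vectors with norm below each threshold, which forces the two ordered multisets of norms to coincide. Your version just spells out the dimension count that the paper leaves implicit.
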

\begin{proof}
	Let $V_{< r}$ be a subspace of $V$ defined as follows:
	\[V_{< r} := \{v \in V | \ell(v) < r\}.\]
	
	Since $V$ is finite dimensional, there is a unique finite sequence $(r_0 < r_1 < \dots < r_k)$ of real numbers such that 
	\begin{itemize}
		\item $0 = V_{< r_0} \subsetneq V_{< r_1} \subsetneq \dots \subsetneq V_{< r_k} =V$, and 
		\item for any $r \in [r_i, r_{i+1})$, $V_{< r_i} = V_{< r}$.
	\end{itemize}
	We note that the sequence $(r_0, \dots, r_k)$ is an invariant of $(V, \ell)$, not depending on the choice of bases. 
	
	Since the sequence determines $\ell(v_i)$ and $\ell(w_i)$, Lemma \ref{lem one to one map} holds. 
\end{proof}

\subsection{Persistence module}
\label{subsection persistence module}
Let us assume that we have a finite-dimensional, orthogonalizable non-Archimedean normed vector space $(V, \ell)$ equipped with a linear map $\partial: V \to V$ satisfying that
\begin{itemize}
	\item $\partial \circ \partial =0$, and 
	\item for all $v \in V$, $\ell(\partial v) \leq \ell(v)$.  
\end{itemize}
The first condition means that $\partial$ can be seen as a differential map. 
In other words, the {\em homology} vector space of $(V, \partial)$, denoted by $H(V,\partial)$, is defined as 
\[H(V,\partial) := \ker(\partial) / \im(\partial).\] 
Moreover, thanks to the non-Archimedean vector space structure on $V$ and the second condition on $\partial$, $H(V,\partial)$ admits an interesting structure.  
The interesting structure, called the {\em persistence module} structure, is defined as follows: 

\begin{definition}
	\label{def persistence module} 
	A {\bf persistence module} is a pair $(V,\pi)$, where $V$ is a collection of vector spaces $\{V_t\}_{t \in \mathbb{R}}$ and $\pi$ is a collection of linear maps $\{\pi_{s,t}:V_s \to V_t\}_{s \leq t \in \R}$ satisfying the following conditions:
	\begin{enumerate}
		\item For any $s \leq t \leq r$, one has $\pi_{s,r} = \pi_{t,r} \circ \pi_{s,t}$, and 
		\item for all $ t \in \mathbb{R}$, $\pi_{t,t}:V_t \to V_t$ is the identity map.
	\end{enumerate}
\end{definition}

In the main body of the paper, we will see symplectic homology and Hamiltonian Floer homology as persistence modules, and we will investigate their properties. 
In order to prepare the main part, we introduce definitions and properties related to the notion of persistence modules briefly in the rest of Section \ref{section persistent homology}.
We refer the reader to \cite{Bubenik-Vergili18, Polterovich-Rosen-Samvelyan-Zhang} for more information.

First, we give examples of persistence modules.
\begin{example}
	\label{example persistence modules}
	\mbox{}
	\begin{enumerate}
		\item The first example is the example mentioned at the beginning of this subsection. 
		Let us assume that the given triple $(V, \ell, \partial)$ satisfies the above conditions. 
		Let $V^{<a}$ be the subspace of $V$ consisting of all vectors $v$ such that $\ell(v)<a$. 
		Then, the second condition on $\partial$ implies that the restriction of $\partial$ on $V^{<a}$ can be seen as a linear map from $V^{<a}$ to itself. 
		Thus, the following $H^{<a}$ is well-defined:
		\[H^{<a} := \ker(\partial|_{V^{<a}}) / \im(\partial|_{V^{<a}}).\]
		Moreover, for any $a \leq b$, there exists an inclusion map $\pi_{a,b}:V^{<a} \to V^{<b}$. 
		It is easy to observe that $\pi_{a,b}$ is a chain map and induces a linear map from $H^{<a}$ to $H^{<b}$.
		Let $\pi_{a,b}$ denote the induced map on $H^{a}$ again.  
		Now, the following is a persistence module:
		\[H(V):= \left(\{H^{<a}\}_{a \in \R}, \{\pi_{a,b}\}_{a \leq b \in \R}\right).\]
		\item The simplest persistence modules are the interval modules. 
		Every non-empty interval $I \subset \R$ defines a persistence module $\mathbb{k}I$ as follows: 
		\[\mathbb{k}I_t := \begin{cases}
			\mathbb{k} \text{  if  } t \in I, \\
			0 \text{  otherwise}.
		\end{cases}
		\pi_{s,t} := \begin{cases}
			id_\mathbb{k} \text{  if  } s, t \in I, \\
			0 \text{  otherwise}.
		\end{cases}\]
		\item Let $\{I_i | i \in \text{  an index set  } J\}$ be a collection of intervals $I_i \subset \R$.
		Let $\{\left(\pi_i\right)_{s,t}\}_{s \leq t}$ be the collection of linear maps for the interval persistence module $\mathbb{k}I_i$ for all $i \in J$. 
		Then, this collection defines a persistence module $\bigoplus_{i \in J} \mathbb{k}I_i$ as follows: 
		\[\left(\bigoplus_{i\in J} \mathbb{k}I_i\right)_t := \bigoplus_{i\in J} \left(\mathbb{k}I_i\right)_t, \pi_{s,t} := \bigoplus_{i\in J} \left(\pi_i\right)_{s,t}. \]		  
	\end{enumerate}
\end{example} 

Before going further, we define a special class of persistence modules.
\begin{definition}
	\label{def pfd persistence module}
	A {\bf point-wise finite dimensional} persistence module, or simply {\bf pfd} persistence module, is a persistence module $(V, \pi)$ such that, for any $t \in \mathbb{R}$, $V_t$ is of finite dimension,  
\end{definition}

We note that pfd persistence modules are special because of the {\em Structure Theorem}.
The Structure Theorem proves that every pfd persistence module is equivalent to a direct sum of interval modules.
We state the Structure Theorem in Theorem \ref{thm Structure Theorem}.
Before stating it, we first define the notion of equivalence between persistence modules. 

\begin{definition}
	\label{def equivalece}
	Let $(V, \pi)$ and $(V', \pi')$ be two persistence modules.
	\begin{enumerate}
		\item A {\bf morphism} $A: (V, \pi) \to (V', \pi')$ is a family of linear maps 
		\[\{A_t: V_t \to V'_t\}_{t \in \mathbb{R}},\]
		such that the following diagram commutes for all $s \leq t$:
		\[\begin{tikzcd}
			V_s \ar[r, "\pi_{s,t}"] \ar[d, "A_s"'] & V_t \ar[d, "A_t"] \\
			V'_s \ar[r, "\pi'_{s,t}"] & V'_t 
		\end{tikzcd}\]
		\item A morphism $A: (V, \pi) \to (V', \pi')$ is an {\bf isomorphism} if there is a morphism $B: (V', \pi') \to (V, \pi)$ such that $A \circ B$ and $B \circ A$ are the identity morphisms on the corresponding persistence module.
		\item Two persistence modules $(V, \pi)$ and $(V', \pi')$ are {\bf isomorphic} if there is an isomorphism $A: (V, \pi) \to (V', \pi')$. 
	\end{enumerate}
\end{definition}
We note that the composition of morphisms and the identity morphism are naturally defined, thus we used them in Definition \ref{def equivalece} without defining them. 

Now, we can state the Structure Theorem. 
The Structure Theorem is proven by Gabriel \cite{Gabriel72} for a special case and by Crawley-Boevey \cite{Crawley-Boevey15} for the general case. 
See \cite[Theorem 2.7]{Bubenik-Vergili18} for more details. 

\begin{thm}[Structure Theorem]
		\label{thm Structure Theorem}
		Let $(V, \pi)$ be a persistence module.
		If $(V,\pi)$ is a pfd persistence module, i.e., $V_t$ is finite dimensional for each $t \in \mathbb{R}$, then $(V,\pi)$ is isomorphic to a direct sum of interval modules. 	
\end{thm}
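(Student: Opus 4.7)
The plan is to follow the approach of Crawley-Boevey \cite{Crawley-Boevey15}, which generalizes Gabriel's theorem \cite{Gabriel72} from finite type-$A$ quivers to the $\R$-indexed setting. The proof splits into three parts: classification of indecomposable pfd persistence modules, verification of the Krull-Schmidt-Azumaya hypothesis, and construction of the decomposition itself.

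First, I would classify the indecomposables. Given a pfd indecomposable $(M,\pi)$, pick $t_0$ with $M_{t_0} \neq 0$ and a nonzero $x \in M_{t_0}$, and consider
\[ I(x) = \{s \leq t_0 : x \in \im(\pi_{s,t_0})\} \cup \{t \geq t_0 : \pi_{t_0,t}(x) \neq 0\}. \]
One checks that $I(x)$ is indeed an interval, and that a suitable coherent choice of lifts of $x$ to the left yields a sub-persistence-module of $M$ isomorphic to $\mathbb{k}I(x)$. Combining with indecomposability and the pfd hypothesis (the latter ensures that the lifting argument can be carried out by a finite-dimensional linear algebra at each step), one shows $M \cong \mathbb{k}I$ for some interval $I$.

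Second, for each interval module $\mathbb{k}I$, the endomorphism ring is $\mathbb{k}$ itself and hence local. This supplies the hypothesis required by the Krull-Schmidt-Azumaya theorem, giving uniqueness of the decomposition (up to isomorphism and reordering) once its existence is established.

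Third, and this is the technical heart of the argument, one must produce an actual decomposition of $(V,\pi)$ into interval modules. Restricting to any finite subset $t_1 < \cdots < t_n$ of $\R$ reduces to a representation of a finite type-$A$ quiver, which decomposes by Gabriel's theorem. The main obstacle is patching these finite decompositions coherently over the uncountable index set $\R$. Crawley-Boevey's resolution uses a functorial/categorical argument to avoid transfinite difficulties; I would invoke this directly rather than attempt to reprove it, as the adaptation to our setting is immediate once pfd-ness is verified. A subtle point worth emphasizing is that pfd-ness is essential: without it, there exist indecomposables that are not interval modules, so the existence step is not merely a formality but genuinely uses the hypothesis $\dim V_t < \infty$.
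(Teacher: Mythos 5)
The paper does not actually prove this theorem: it cites Gabriel \cite{Gabriel72} for the special case and Crawley-Boevey \cite{Crawley-Boevey15} for the general pfd case, which is exactly where your argument also bottoms out, since your ``technical heart'' (coherently patching the finite type-$A$ decompositions over the uncountable index set $\R$) is delegated to Crawley-Boevey's functorial-filtration construction rather than carried out. Your surrounding outline --- interval classification of indecomposables, locality of $\mathrm{End}(\mathbb{k}I)$ feeding into Krull--Schmidt--Azumaya for uniqueness, and the remark that pfd-ness is genuinely needed for existence --- is the standard and correct framing of the cited proof, so your proposal takes essentially the same route as the paper.
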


Thanks to Theorem \ref{thm Structure Theorem}, we can define the following:
\begin{definition}
	\label{def finite type}
	A pfd persistence module $(V,\pi)$ is 
 \begin{itemize}
     \item  a persistence module of {\bf finite type} if $(V,\pi)$ is equivalent to a direct sum of finitely many interval modules. Equivalently, for all but a finite number of points $t\in \mathbb{R}$ there exists a neighborhood $U$ of $t$ such that $\pi_{s,r}$ is an isomorphism for $s<r$ in $U$.
    \item a persistence module of {\bf locally finite type} if $(V,\pi)$ is equivalent to a direct sum of countably many interval modules and the real endpoints of the intervals form a closed discrete subset of $\R$. Equivalently, for all but a close discrete subset of $\R$, $t\in \mathbb{R}$ there exists a neighborhood $U$ of $t$ such that $\pi_{s,r}$ is an isomorphism for $s<r$ in $U$.
\end{itemize}
\end{definition}

The special classes of persistence modules, i.e., pfd persistence modules and persistence module of (locally) finite types, play an important role in this paper. 
The following remark describes the role roughly:

\begin{remark}
	\label{rmk summary of barcode entropy}
	As mentioned before, we will see a symplectic homology and a Hamiltonian Floer homology as persistence modules. 
	From symplectic geometry, one can show that these homologies are persistence modules of (locally) finite type. This enables us to count the number of intervals satisfying some conditions. One of our main characters, the {\em barcode entropy}, is defined as the exponential growth rate of the number of {\em not-too-short} intervals of the Floer homologies, when a sequence of Hamiltonian Floer homologies is given. 
	The formal definition will be given in Section \ref{section two barcode entropy}.
\end{remark}

We note that there are many other classes of persistence modules. 
See \cite[Section 3.1]{Bubenik-Vergili18}.
We introduce two other classes that we will use in Lemma \ref{lem complete space}.

\begin{definition}
	\label{def cid qtame}
	\mbox{}
	\begin{enumerate}
		\item A {\bf countably interval decomposable}, or simply {\bf cid} persistence module, is a persistence module equivalent to a direct sum of countably many interval modules. 
		We let $\boldsymbol{(cid)}$ denote the set of cid persistence modules. 
		\item A {\bf q-tame} persistence module $(V, \pi)$ is a persistence module such that $\pi_{a,b}$ has a finite rank for any $a < b$. 
		We let $\boldsymbol{(qtame)}$ denote the set of q-tame persistence modules. 
	\end{enumerate}
\end{definition}

We end Section \ref{subsection persistence module} by defining two operations generating a persistence module from another persistence module. 

\begin{definition}
	\label{def two operations}
	\mbox{}
	\begin{enumerate}
		\item Let $f: \mathbb{R} \to \mathbb{R}$ be a strictly increasing function. 
		Let $(V,\pi)$ be a persistence module.
		The {\bf action reparametrization of $\boldsymbol{V}$ with respect to $\boldsymbol{f}$}, denoted by $\boldsymbol{\mathrm{act}(V,f)}$ is a persistence module defined as follows:
		\[\act(V,f) := \left(\{\act(V,f)_t := V_{f(t)}\}_{t \in \R}, \{f(\pi)_{a,b} = \pi_{f(a),f(b)}\}_{a\leq b \in R}\right).\]
		\item Let $V_1$ and $V_2$ be two barcodes. 
		We say {\bf $\boldsymbol{V_2}$ agrees with $\boldsymbol{V_1}$ after (action) reparametrization} if there exists a strictly increasing function $f: \mathbb{R} \to \mathbb{R}$ such that 
		\[V_1 = \act(V_2,f).\]
		\item Let $(V,\pi)$ be a persistence module, and let $T$ be a real number. 
		Then, the {\bf truncation of $\boldsymbol{V}$ at $\boldsymbol{T}$}, denoted by $\boldsymbol{\mathrm{tru}(V,T)}$, is a persistence module 
		\[\tru(V,T) := \left(\{\tru(V,T)_t\}_{t \in \R}, \{\tru(\pi,T)_{a,b}\}_{a \leq b \in \R} \right),\] 
		defined as follows:
		\begin{align*}
			\tru(V,T)_t = \begin{cases}
				V_t \text{  if  } t <T \\ 0 \text{  otherwise}
			\end{cases}, 
			\tru(\pi,T)_{a,b} = \begin{cases}
				\pi_{a,b} \text{  if  } b <T \\ 0 \text{  otherwise}
			\end{cases}.
		\end{align*}
	\end{enumerate} 
\end{definition}

\subsection{Interleaving distance}
\label{subsection interleaving distance}
In Section \ref{subsection persistence module}, we introduced the definition of persistence modules and related notions. 
In the current subsection, we discuss the distance between two persistence modules.

In the literature, there exists a distance function which is called {\em interleaving distance}. 
In order to define the interleaving distance, we need the following definition:
\begin{definition}
	\label{def shift}
	Let $(V, \pi)$ be a persistence module and let $\delta \in \mathbb{R}$. 
	\begin{enumerate}
		\item We define {\bf $\boldsymbol{\delta}$-shift} of $(V, \pi)$ as the persistence module $(V[\delta], \pi[\delta])$ defined as follows:
		\[\left(V[\delta]\right)_t := V_{t+\delta}, \left(\pi[\delta]\right)_{s,t} = \pi_{s+\delta, t+\delta}.\]
		\item For $\delta>0$, we define {\bf the $\boldsymbol{\delta}$-shift morphism} as the following morphism: 
		\[\Phi^\delta_{(V,\pi)}: (V,\pi) \to (V[\delta], \pi[\delta]), \left(\Phi^\delta_{(V,\pi)}\right)_t = \pi_{t,t+\delta}.\]
		\item Let $F: (V,\pi) \to (V',\pi')$ be a morphism between two persistence modules. 
		Then, for $\delta >0$, let us denote the morphism commuting the following diagram by $\boldsymbol{F[\delta]}$:
		\[\begin{tikzcd}
			(V, \pi) \ar[r, "F"] \ar[d, "\Phi^\delta_{(V,\pi)}"'] & (V', \pi') \ar[d, "\Phi^\delta_{(V',\pi')}"]  \\
			(V[\delta], \pi[\delta]) \ar[r, "F\lbrack\delta\rbrack"] & (V'[\delta], \pi'[\delta])
		\end{tikzcd}\]
	\end{enumerate}
\end{definition} 

By using the $\delta$-shifts of persistence modules, we define a distance between two persistence modules in the following way:
\begin{definition}
	\label{def interleaving distance}
	\mbox{}
	\begin{enumerate}
		\item Let $\delta>0$. 
		Two persistence modules $(V,\pi)$ and $(V', \pi')$ are {\bf $\boldsymbol{\delta}$-interleaved} if there exist two morphisms $F: (V,\pi) \to (V'[\delta], \pi'[\delta])$ and $G: (V',\pi') \to (V[\delta], \pi[\delta])$ such that the following diagrams commute:
		\[\begin{tikzcd}
			(V, \pi) \ar[r,"F"] \ar[rr, bend right=20, "\Phi^{2\delta}_{(V,\pi)}"'] & (V'[\delta], \pi'[\delta]) \ar[r, "G\lbrack \delta \rbrack"] & (V[2\delta], \pi[2\delta])
		\end{tikzcd} 
		\begin{tikzcd}
			(V', \pi') \ar[r,"G"] \ar[rr, bend right=20, "\Phi^{2\delta}_{(V',\pi')}"'] & (V[\delta], \pi[\delta]) \ar[r, "F\lbrack \delta \rbrack"] & (V'[2\delta], \pi'[2\delta])
		\end{tikzcd}\]
		\item The {\bf interleaving distance} between two persistence modules $(V,\pi)$ and $(V', \pi')$ is defined as follows: 
		\[d_{int}\left((V, \pi), (V', \pi')\right) := \inf \left\{ \delta >0 | (V,\pi) \text{  and  } (V', \pi') \text{  are $\delta$-interleaved.}\right\}.\]
	\end{enumerate}
\end{definition} 

We note that the interleaving distance defines a {\em pseudometric} structure on the set of persistence modules, not a metric structure.
In other words, one can find two persistence modules $V$ and $W$ such that $V$ and $W$ are not isomorphic, but $d_{int}(V, W)=0$.
The simplest example is a pair of interval modules $\mathbb{k}[0,1]$ and $\mathbb{k}(0,1)$. 
Then, the interleaving distance between them is zero, but they are not isomorphic.

To obtain a metric structure, we consider the set of equivalence classes of the equivalence relation $V \sim W$ if $d_{int}(V, W)=0$. Then, the interleaving distance function gives a metric structure on the set of (equivalence classes of) persistence modules. 
For more details about the (pseudo)metric structure induced from the interleaving distance, we refer the reader to \cite[Section 2.4]{Bubenik-Vergili18}.

For equivalence classes of cid persistence module, there exists a unique element satisfying the lower semi-continuity property: for every $t\in \R$,
\[ V_t = \varinjlim_{s<t} V_s,\]
where the direct system is given by the persistence map $\pi_{s,r}$ for real numbers $s < r < t$. This is a unique element in the equivalence class that is equivalent to a direct sum of intervals modules in forms of $(a,b], (-\infty, b],$ and $(a, \infty)$ for some real numbers $a <b$. We let {\textbf{(lower)}} denote the set of persistence modules with lower semi-continuity properties.

We note that every Persistence module constructed from Floer theories will be countably interval decomposable q-tame persistence module with lower semi-continuity properties. For this class of persistence modules, interleaving distance gives a metric structure. Thus, we can consider a {\em Cauchy sequence} of persistence modules. Lemma \ref{lem complete space} cares about Cauchy sequences of persistence modules. 
\begin{lem}[Theorem 4.25 of \cite{Bubenik-Vergili18}]
	\label{lem complete space}
	Every Cauchy sequence in $(cid) \cap (qtame) \cap (lower)$ has a unique limit point. 
	In other words, the space $(cid) \cap (qtame) \cap (lower)$ is complete. 
\end{lem}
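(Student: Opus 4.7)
The plan is to reduce the completeness statement to a completeness result for barcodes under the bottleneck distance, exploiting the isometry theorem of Chazal--de Silva--Glisse--Oudot (and Bauer--Lesnick), which asserts that for q-tame persistence modules the interleaving distance coincides with the bottleneck distance between the associated barcodes. Because of the lower semi-continuity condition, each equivalence class under $d_{int}=0$ contains a unique representative, so $d_{int}$ restricts to an honest metric on $(cid)\cap(qtame)\cap(lower)$, and uniqueness of the limit is automatic once existence is established.

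Given a Cauchy sequence $\{(V_n,\pi^n)\}$ in $(cid)\cap(qtame)\cap(lower)$, I would first pass to a subsequence with $d_{int}(V_n,V_{n+1})<2^{-n}$. By the Structure Theorem and Definition \ref{def cid qtame}, each $V_n$ is encoded by a barcode $B_n$, and by the isometry theorem there exist $2^{-n}$-matchings $\mu_n\colon B_n\rightharpoonup B_{n+1}$, i.e.\ partial bijections that match bars whose endpoints differ by at most $2^{-n}$ and leave only bars of length at most $2\cdot 2^{-n}$ unmatched. Define a ``thread'' to be a sequence of bars $I_{n_0},I_{n_0+1},\ldots$ with $I_{n+1}=\mu_n(I_n)$. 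Since $\sum_n 2^{-n}<\infty$, the left endpoints $a_n$ and right endpoints $b_n$ of a thread are Cauchy in $\R\cup\{\pm\infty\}$, so they converge to limits $a_\infty\le b_\infty$. Let $B_\infty$ be the multiset of intervals $(a_\infty,b_\infty]$ arising from threads with $b_\infty-a_\infty>0$ (together with the evident conventions for half-infinite threads), and let $V_\infty$ be the corresponding lower semi-continuous direct sum of interval modules.

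Next I would verify that $V_\infty\in (cid)\cap(qtame)\cap(lower)$. Countability of $B_\infty$ follows because each $B_n$ is countable and threads are parametrized by tails of sequences of bars. For q-tameness, fix $a<b$ and $\epsilon\in(0,(b-a)/2)$; only bars of length $>2\epsilon$ in $B_n$ can persist along a thread to contribute an interval in $B_\infty$ containing $[a+\epsilon,b-\epsilon]$, and q-tameness of each $V_n$ together with a uniform matching argument bounds their number independently of $n$, so $\pi^\infty_{a,b}$ has finite rank. Lower semi-continuity is built into the construction by choosing half-open intervals $(a_\infty,b_\infty]$. Finally, to show $d_{int}(V_n,V_\infty)\to 0$, I would compose the matchings $\mu_n,\mu_{n+1},\ldots$ to get a matching $B_n\rightharpoonup B_\infty$ whose error is bounded by $\sum_{k\ge n}2^{-k}=2^{1-n}$, and invoke the isometry theorem in the reverse direction.

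The main obstacle will be the bookkeeping in the thread construction: controlling bars that eventually become matched only after some index, and bars whose endpoints drift to $\pm\infty$ or whose lengths shrink to zero in the limit. These ``boundary'' threads must be discarded or relabeled consistently so that the resulting $B_\infty$ is well-defined as a multiset and still satisfies q-tameness, which is what prevents accumulation of long bars at any interior point. Once the bookkeeping is in place, the estimates above are standard and show both convergence and that the limit lies in the prescribed class; uniqueness then follows from the metric property of $d_{int}$ on $(lower)$.
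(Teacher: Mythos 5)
The paper itself offers no argument here: it simply defers to Theorem 4.25 of Bubenik--Vergili, whose proof works at the level of persistence modules (extract a rapidly Cauchy subsequence, build the limit along the interleaving morphisms, and check that the classes $(cid)$, $(qtame)$, $(lower)$ are preserved). Your proposal is a genuinely different, barcode-theoretic route: convert the $2^{-n}$-interleavings into $2^{-n}$-matchings, follow threads of matched bars, and define the limit module from the limiting endpoints. The outline is sound and the estimates do close up: a bar of $B_n$ of length $>4\cdot 2^{-n}$ remains matched at every later stage (its length at stage $k$ stays above $4\cdot 2^{-k}$), so its thread survives and yields a limit bar of positive length whose endpoints move by at most $\sum_{k\ge n}2^{-k}=2^{1-n}$; a thread born at stage $n_0>n$ starts from a bar unmatched by a $2^{-(n_0-1)}$-matching, hence of length at most $4\cdot 2^{-n_0}$, and its limit bar has length at most $8\cdot 2^{-n_0}\le 4\cdot 2^{-n}$, so it need not be matched in the $2^{1-n}$-matching $B_n\rightharpoonup B_\infty$; and q-tameness of the limit follows because a limit bar containing $[a,b]$ forces, for $n$ large, a bar of $B_n$ containing $[a+2^{1-n},b-2^{1-n}]$, and these are finitely many by q-tameness of $V_n$. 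Two points deserve explicit care in a write-up. First, the Isometry Theorem as stated in the paper (Theorem \ref{thm isometry theorm}) covers only pfd modules, whereas your modules are merely cid and q-tame (they can fail to be pointwise finite dimensional), so you genuinely need the q-tame isometry theorem of Chazal--de Silva--Glisse--Oudot that you invoke, or a direct argument for countably interval-decomposable modules. Second, you should state that threads are identified with their tails and that distinct bars at a given stage generate distinct threads (matchings are partial bijections), so the multiset $B_\infty$ and the matchings into it are well defined; this is exactly the bookkeeping you flag, and it does go through. In exchange for this extra work, your route gives a self-contained proof with an explicit description of the limit barcode, while the paper's citation route gets existence of the limit and membership in $(cid)\cap(qtame)\cap(lower)$ from the reference with no decoration or matching bookkeeping.
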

\begin{proof}
	See \cite[Theorem 4.25]{Bubenik-Vergili18} and its proof. 
\end{proof}

\subsection{Barcodes and the bottleneck distance}
\label{subsection barcodes and the bottleneck distance}
We recall that by the Structure Theorem (Theorem \ref{thm Structure Theorem}), every pfd persistence module is isomorphic to a direct sum of interval modules.
Since we consider an equivalence class of persistence module, not a persistence module itself, every pfd persistence module type is isomorphic to a {\em barcode} defined as follows:
\begin{definition}
	\label{def barcodes}
	\mbox{} 
	\begin{enumerate}
			\item A {\bf barcode} $\boldsymbol{B}$ is a set of intervals in $\R$, i.e., it is a collection 
			\[B = \left\{\text{  an interval  } I_i \subset \R | i \in \text{  an index set  } J\right\}.\] 
			\item The intervals in a barcode are called {\bf bars} of the barcode.
		\end{enumerate}
\end{definition}

For convenience, we identify a barcode $B = \{I_i | i \in J\}$ with a persistence module 
\[\bigoplus_{i \in J} \mathbb{k}I_i.\]

The purpose of introducing barcodes is to define another distance function on the set of pfd persistence modules.
More precisely, we define a {\em bottleneck} distance on the set of barcodes. 
In the later part of this subsection, we will state the Isometry Theorem, claiming that the interleaving distance and the bottleneck distance are the same.
However, even though the two distance functions are the same, they provide different ways of measuring distances between persistence modules, and we will utilize both ways of measuring in the paper.  

To define the bottleneck distance, we need the following preparations:
\begin{itemize}
	\item Given an interval $I$ and a positive number $\delta$, let $I^{-\delta}$ denote the interval obtained from $I$ by expanding by $\delta$, for example, if $I=[a,b]$, then
	\[I^{-\delta}= [a-\delta, b+\delta].\]
	\item For a barcode $B$ and a positive number $\epsilon$, let $B_{\epsilon}$ the set of all bars from $B$ of length greater than $\epsilon$. 
	\item A {\em matching} between two barcodes $B_1=\left\{I_i | i \in J_1\right\}, B_2=\left\{I'_j | j \in J_2\right\}$ is a bijection $\mu: \bigsqcup_{i \in J_1'} I_i \to \bigsqcup_{j \in J_2'} I'_j$, where $J_1' \subset J_1, J_2'\subset J_2$, and where $\bigsqcup$ means a disjoint union.
	In this case, $\bigsqcup_{i \in J_1'} I_i$, or equivalently a barcode $B_1':= \{I_i | i \in J_1'\}$, is the coimage of $\mu$, and $\bigsqcup_{j \in J_2'} I'_j$, or equivalently a barcode $B_2':=\{I_j' | j \in J_2'\}$ is the image of $\mu$. 
\end{itemize}

\begin{definition}
	\label{def bottleneck}
	\mbox{}
	\begin{enumerate}
		\item A {\bf $\boldsymbol{\delta}$-matching} between two countable barcodes $B$ and $C$ is a matching $\mu:B \to C$ such that 
		\begin{itemize}
			\item the coimage of $\mu$ contains $B_{2\delta}$, 
			\item the image of $\mu$ contains $C_{2\delta}$, and 
			\item if a bar $I$ in $B$ and a bar $J$ in $C$ satisfy that $\mu(I) = J$, then $I \subset J^{-\delta}, J \subset I^{-\delta}$.  
		\end{itemize}
		\item The {\bf bottleneck distance} between two countable, proper barcodes $B$ and $C$, denoted by $\boldsymbol{b_{bot}(B,C)}$, is defined to be the infinmum over all $\delta$ such that ther is a $\delta$-matching between $B$ and $C$. 
	\end{enumerate}
\end{definition}

Now, we state the Isometry Theorem, claiming that the interleaving and bottleneck distances are the same for finite barcodes. 

\begin{thm}[Isometry Theorem]
	\label{thm isometry theorm}
	Let $V$ and $W$ be two pfd persistence modules, and let $B(V)$ and $B(W)$ be barcodes such that $V$ and $B(V)$ (resp.\ $W$ and $B(W)$) are isomorphic to each other as persistence modules. 
	Then, 
	\[d_{int}(V,W) = d_{bot}\left(B(V), B(W)\right).\]
\end{thm}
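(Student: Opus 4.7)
The plan is to establish the two inequalities $d_{int}(V,W) \le d_{bot}(B(V),B(W))$ and $d_{bot}(B(V),B(W)) \le d_{int}(V,W)$ separately, as is standard for the Isometry Theorem.

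For the easy direction $d_{int} \le d_{bot}$, I would build an interleaving out of a matching. Fix $\delta > 0$ and a $\delta$-matching $\mu: B(V) \to B(W)$. Using the Structure Theorem (Theorem \ref{thm Structure Theorem}), choose isomorphisms $V \cong \bigoplus_i \mathbb{k} I_i$ and $W \cong \bigoplus_j \mathbb{k} J_j$. It suffices to treat three cases separately and then take a direct sum of the resulting interleavings: (i) a matched pair with $I \subset J^{-\delta}$ and $J \subset I^{-\delta}$, where the natural identity-where-nonzero scalar maps $\mathbb{k}I \to \mathbb{k}J[\delta]$ and $\mathbb{k}J \to \mathbb{k}I[\delta]$ verify the interleaving diagrams of Definition \ref{def interleaving distance} by inspection; (ii) an unmatched bar of $B(V)$, which by hypothesis has length $\le 2\delta$, so the shift morphism $\Phi^{2\delta}$ on $\mathbb{k}I$ vanishes and $(\mathbb{k}I, 0)$ is trivially $\delta$-interleaved; (iii) the symmetric unmatched bar of $B(W)$.

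For the hard direction $d_{bot} \le d_{int}$ (algebraic stability), I would follow the induced-matching strategy of Bauer--Lesnick. Suppose $V$ and $W$ are $\delta$-interleaved via $F: V \to W[\delta]$ and $G: W \to V[\delta]$ satisfying $G[\delta] \circ F = \Phi^{2\delta}_V$ and $F[\delta] \circ G = \Phi^{2\delta}_W$. The core idea is that any morphism $\phi: M \to N$ between pfd persistence modules induces a canonical partial matching $\chi_\phi: B(M) \to B(N)$ built from rank functions of the persistence maps: for each threshold $t$, one compares the ranks of $\pi^M_{s,t}$ and of $\pi^N_{s+\delta,t+\delta} \circ \phi_s$, and the difference identifies which bars of $M$ born near $s$ are carried by $\phi$ to bars of $N$. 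By the Structure Theorem these ranks faithfully encode bar multiplicities, so $\chi_\phi$ is well-defined.

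Applying this construction to $F$ yields a partial matching $\chi_F: B(V) \to B(W)$. The interleaving identities then imply that any bar of $B(V)$ of length strictly greater than $2\delta$ survives the composition $\Phi^{2\delta}_V = G[\delta] \circ F$, hence must lie in the coimage of $\chi_F$, and symmetrically for $B(W)$; moreover the offset estimates inherent in the rank comparison force $I \subset \chi_F(I)^{-\delta}$ and $\chi_F(I) \subset I^{-\delta}$ on matched pairs. Thus $\chi_F$ is a $\delta$-matching, and letting $\delta \searrow d_{int}(V,W)$ yields the desired inequality. The main obstacle is establishing well-definedness and the stability properties of $\chi_F$; this requires careful bookkeeping of how the ranks $\dim \mathrm{Im}(\pi_{s,t})$ transform under the interleaving maps, and is where all the real work lies.
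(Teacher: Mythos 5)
The paper does not give a proof of this theorem at all---it simply cites Bauer--Lesnick and their induced-matching argument---and your outline is precisely that standard proof: the easy direction by assembling interleavings of interval modules from a matching, and algebraic stability via the induced matching $\chi_F$. So you take essentially the same route as the paper's reference; your sketch of the hard direction is correct in outline, with the technical content (well-definedness and stability of the induced matching, which Bauer--Lesnick actually define via the epi-mono factorization of $F$ rather than directly by rank comparisons) deferred exactly where you say it is.
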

\begin{proof}
	The Isometry Theorem is given in \cite{Bauer-Lesnick15}.
	See \cite[Section 3 and Proof of Theorem 3.5]{Bauer-Lesnick15} and references therein.
\end{proof}

\section{Two barcode entropy}
\label{section two barcode entropy}
The main idea of the paper is to apply the theory of persistence modules to symplectic homologies. 
In other words, we would like to construct persistence modules from symplectic homologies, and would like to investigate the constructed persistence modules.

In Section \ref{section two barcode entropy}, we focus on the part of constructing persistence modules. 
We start the section by briefly reviewing symplectic homologies.
And we also explain what persistence modules are related to symplectic homologies.
Finally, we can define the main characters of the paper, which are called {\em barcode entropy}.

\subsection{Setting and notations}
\label{subsection setting and notations}
We set the situation we are interested in and notations on the situation in this subsection. 

Let $W$ be a Liouville domain.
In other words, $W$ is a manifold with boundary, which is equipped with a one-form $\lambda$ such that $d\lambda$ is a symplectic two-form on $W$. 
Then, the boundary $\partial W$ is a contact manifold such that the restriction of $\lambda$ on $\partial W$ is a contact one-form. 
For simplicity, we set $\alpha:= \lambda|_{\partial W}$. 
We also assume that $\alpha$ is a {\em non-degenerate} contact one-form. For the later use, let $R_\alpha$ denote the Reeb vector field corresponding to $\alpha$. Throughout the paper, we further assume the first Chern class of $W$ vanishes.

Let $\widehat{W}$ be the completion of the Liouville domain $W$.
It means that 
\begin{itemize}
	\item as a set $\widehat{W} = W \cup \left( \partial W \times [1,\infty)_r\right)$ (the subscription $r$ means $[1,\infty)$-factor is coordinated by $r$) where $\partial W$ and $\partial W \times \{1\}$ are identified by the trivial identification, 
	\item the Liouville one-form $\lambda$ extends to a Liouville one-form on $\widehat{W}$, denoted by $\lambda$ again for convenience, and
	\item  on $\partial W \times [1,\infty)$, $\lambda = r \alpha$. 
\end{itemize}

For the later use, we set notations in the rest of Section \ref{subsection setting and notations}.

Let $t>1$ be a fixed real number. 
Then, $\boldsymbol{W_t}$ is a compact subset of $\widehat{W}$ defined as  
\[W_t := W \cup \left(\partial W \times [1,t]\right).\]

In order to define the symplectic homology of $\widehat{W}$, we need to consider smooth $1$-periodic families of Hamiltonian functions. 
In other words, we consider a (time-dependent) Hamiltonian function $H: S^1 \times \widehat{W} \to \mathbb{R}$, where $S^1$ is identified with $\mathbb{R}/\mathbb{Z}$. 
Since $\widehat{W}$ is a symplectic manifold, there is a Hamiltonian vector field for $H$.
Let $\boldsymbol{X_H}$ denote the Hamiltonian vector field, i.e., 
\[d \lambda\left(X_H, Y\right) = -dH(Y) \text{  for all vector field  } Y \text{  on  } \widehat{W}.\]
We also set a notation for the time $t$-map of $X_H$.
Let $\boldsymbol{\phi^t_H}$ denote the time $t$-map of $X_H$.
Sometimes, we let $\boldsymbol{\phi_H}$ denote the time $1$-map of $X_H$, i.e., $\phi^1_H$, for convenience.

For technical reasons, we will consider Hamiltonian functions with some nice properties defined as follows: 
\begin{definition}
	\label{def linear at infinity}
	Let $H$ be a (time-dependent) Hamiltonian function on $\widehat{W}$.
	\begin{enumerate}
		\item We say that $H$ is {\bf linear at infinity} if there exist two real numbers $r_0 >1$ and $T$ such that 
		\begin{itemize}
			\item $T$ is not a period of any closed Reeb orbits of $\left(\partial W, \alpha\right)$, and
			\item for any $(t, r, x) \in S^1 \times \left(\partial W \times [r_0, \infty) \right)$, $H(t, r, x) = Tr - C$ for some constant $C \in \mathbb{R}$.  
		\end{itemize}
		The constant $T$ satisfying the above two conditions is called {\bf slope of $\boldsymbol{H}$}.
		\item We say that $H$ is {\bf non-degenerate} if every fixed point of $\phi_H$ does not admit $1$ as an eigenvalue for its linearized Poincare return map.
	\end{enumerate}
\end{definition}

\subsection{Hamiltonian Floer homology and the corresponding persistence module}
\label{subsection Hamiltonian Floer homology and the corresponding persistence module}
We start the main part of Section \ref{section two barcode entropy} by reviewing the notion of Hamiltonian Floer homology on the Liouville domain. For more details, we refer the reader to \cite{Salamon99, Seidel08, Audin-Damian14}.

Let $H: S^1 \times \widehat{W} \to \R$ be a time-dependent Hamiltonian such that $H$ is non-degenerate and linear at infinity.
We denote by $\mathcal{P}_1(H)$ the set of 1-periodic orbits of the Hamiltonian flow of $H$. 
Since $H$ is linear at infinity, 1-periodic orbits or equivalently fixed points of $\phi_H$ are points of $W$. By non-degeneracy of $H$, the set of fixed points is isolated. We conclude that $\mathcal{P}_1(H)$ is a finite set. Then, one can define a finite-dimensional vector space and an action functional on the vector space from $H$ as follows:

\begin{definition}
	\label{def action functional on CF(H)}
	\mbox{}
\begin{enumerate}
	\item Let $\boldsymbol{\CF(H)}$ denote the vector space defined as
	\[\CF(H) := \bigoplus_{\gamma \in \mathcal{P}_1(H)} \mathbb{Z}_2\left<\gamma\right>.\] 
	\item The {\bf action functional on $\boldsymbol{\CF(H)}$}, denoted by $\boldsymbol{\cA_H}$, is defined as follows: 
	\begin{itemize}
		\item for any $\gamma \in \mathcal{P}(H)$, 
		\[\cA_H(\gamma) = \int_\gamma \lambda - \int_0^1H\left(t,\gamma(t)\right)dt,\]
		\item for any $a_i \in \mathbb{Z}_2$ and $\gamma_i \in \mathcal{P}(H)$ such that $\gamma_i \neq \gamma_j$ for all $i \neq j$, 
		\[\cA_H\left(\sum_i a_i \gamma_i\right) := \max_i \{\cA_H(\gamma_i)| a_i \neq 0\}.\]
	\end{itemize}
	\item For any $a \in \mathbb{R}$ we set a subspace $\CF^{<a}(H) \subset \CF(H)$ as
    \[\CF^{<a}(H) = \{x \in \CF(H) | \cA_H(x) < a\}.\]
\end{enumerate}
\end{definition}

\begin{remark}
	\label{rmk non-Archimedean normed vector space}
	By the second bullet of (2), $(\CF(H), \cA_H)$ is an orthogonalizable non-Archimedean normed vector space.
\end{remark}

Now, we introduce a differential map $\partial: \CF(H) \to \CF(H)$, i.e., a linear map such that $\partial \circ \partial =0$.
For introducing that, we need to discuss a choice of almost complex structure. 

Let $J$ be a smooth $S^1$-family of $\left(d\lambda\right)$-compatible almost complex structure. 
We further assume that $J$ is {\em SFT-like on $r\ge r_0$} for some constant $r_0 > 1$.
In other words, $J$ satisfies following conditions on $r \ge r_0$:
\begin{itemize}
	\label{SFT-like}
	\item $J(\xi)=\xi$ for the contact structure $\xi=\text{ker}(\alpha)\subset T\left(\partial W \times \{r\}\right)$.
	\item $J^*\hat\lambda=dr$, i.e. $J(\partial_r)=R$.
	\item $J$ is invariant under the natural action by $\mathbb{R}$-translation.
\end{itemize}
\noindent We denote $\mathcal{J}_{r_0}^{\text{SFT}}$ the set of $(d\lambda)$-compatible almost complex structures SFT-like in $r\ge r_0$.

For $x,y \in \mathcal{P}_1(H)$, we denote by $\mathcal{M}(x,y;H,J)$ the moduli space of Floer cylinders from $x$ to $y$, or equivalently, the moduli space of smooth solutions $u:\mathbb{R}\times S^1\to \widehat{W}$ of the following equation:
\[\partial_s u + J(t,u)(\partial_t u + X_H(t,u)) =0, \lim_{s\to -\infty}u(s,\cdot)=x, \lim_{s\to +\infty}u(s,\cdot)=y,\]
where $(s,t)\in \mathbb{R}\times S^1$.
We recall that $X_H(t,p)$ is defined to be the Hamiltonian vector field of $H_t$ at point $p$ in Section \ref{subsection setting and notations}. In our convention, the Floer equation is the gradient flow line for the Hamiltonian action functional $\mathcal{A}_H$.

We note that if $x \neq y$, $\mathcal{M}(x,y;H,J)$ admits a free $\mathbb{R}$-action.
The $\mathbb{R}$-action is defined by the $\R$-translation on $s$-coordinate. 
Let us consider the quotient set $\mathcal{M}(x,y;H,J)/\mathbb{R}$.

For $x, y \in \mathcal{P}_1(H)$, we set $n(x,y) \in \mathbb{Z}_2$ as follows:
If $x \neq y$, and if the quotient set $\mathcal{M}(x,y;H,J)/\mathbb{R}$ is a finite set, then $n(x,y)$ is defined to be the parity of the number of elements in $\mathcal{M}(x,y;H,J)/\mathbb{R}$.
If either of the above conditions does not hold, then $n(x,y)=0$.
Then, one can define a linear map $\partial=\partial_{H,J}: \CF(H)\to\CF(H)$ satisfying that
\[\partial y := \sum_{x \in \mathcal{P}_1(H;\beta)} n(x,y)x \text{  for all  } y \in \mathcal{P}(H).\]

For a generic choice of $J\in \mathcal{J}_{r\ge r_0}^{\text{SFT}}$, $\partial \circ \partial = 0$, for more explanation see \cite[Section 3.2]{Salamon99}. We'll say such $J$ is $H-$regular, or simply \textbf{regular}. Thus, for such $J$, the pair $(\CF(H), \partial_{H,J})$ forms a chain complex. 
For the chain complex and its homology, we set notations: 
\begin{definition}
	\label{def CH(H,J)}
	\mbox{}
	\begin{enumerate}
		  \item A pair $(H,J)$ is called a {\bf Floer data} for a Liouville manifold    $(\widehat{W}, \lambda)$ if the following hold: 
		  \begin{itemize}
			\item $H: S^1 \times \widehat{W} \to \R$ is a non-degenerate, (time-dependent) Hamiltonian function linear at infinity.
			\item $J$ is an almost complex structure in $\mathcal{J}_{r_0}^{\text{SFT}}$ for some $r_0 > 1$.
		\end{itemize}
	    \item For a Floer data $(H,J)$, the pair $(\CF(H), \partial_{H,J})$ is simply denoted by $\boldsymbol{\CF(H,J)}$.
		\item The homology of $\,\CF(H,J)$ is denoted by $\boldsymbol{\HF(H,J)}$, i.e.,
		\[\HF(H,J):= \ker \partial_{H,J} / \im \partial_{H,J}.\] 
	\end{enumerate}
\end{definition}

Since $0 \le E(u)=\int \norm{\partial_s u}^2=\mathcal{A}_H(x)-\mathcal{A}_H(y)$ for $u\in \mathcal{M}(x,y;H,J)$, Floer cylinder from $x$ to $y$ cannot exist if $\cA_H(x) < \cA_H(y)$. It means that the restriction of $\partial$ onto the subspace $\CF^{<a}(H)$ gives a linear automorphism of $\CF^{<a}(H)$ for any $a \in \mathbb{R}$.
Thus, the following definition makes sense:
\begin{definition}
	\label{def HF<a(H,J)}
	For any $a \in \mathbb{R}$, $\boldsymbol{\HF^{<a}(H,J)}$ is defined to be the homology of $(\CF^{<a}(H), \partial|_{\CF^{<a}(H)})$, i.e., 
		\[\HF^{<a}(H,J):= \ker\left(\partial|_{\CF^{<a}(H)}:\CF^{<a}(H) \to \CF^{<a}(H)\right) \Big/ \im\left(\partial|_{\CF^{<a}(H)}:\CF^{<a}(H) \to \CF^{<a}(H)\right).\]
\end{definition}

Now, we would like to discuss the fact that $\HF(H,J)$ and $\HF^{<a}(H,J)$ are independent of the choice of $J$.
More precisely, for two pairs $(H, J_1)$ and $(H, J_2)$ satisfying the above conditions, we will show that there is a linear isomorphism between $\HF(H,J_1)$ and $\HF(H,J_2)$. (resp.\ $\HF^{<a}(H,J_1)$ and $\HF^{<a}(H,J_2)$ for any $a \in \mathbb{R}$.)

To show that, let us assume that we have two pairs $(H^\pm, J^\pm)$ satisfying the above conditions so that $\HF^{<a}(H^\pm,J^\pm)$ is defined for all $ a \in \mathbb{R}$. 
Moreover, we assume that $H^+(t, p)\le H^-(t,p)$ for all $t \in S^1$ and $p\in \widehat{W}$. 
Because of the assumption, one can choose a smooth homotopy of Hamiltonians $\{H_s\}_{s \in \mathbb{R}}$ such that 
\[H_s = H^- \text{  for  }\forall s\le -1,\, H_s=H^+ \text{  for  }\forall s\ge 1, \text{  } \partial_s H_s \le 0 \text{  for  } \forall s \in \mathbb{R}.\]

Similarly, let $\set{J_s}_{s\in\R}$ be a smooth homotopy of time-dependent almost complex structures such that 
\[J_s = J^- \text{  for  }\forall s\le -1, J_s=J^+, \text{  for  }\forall s\ge 1, \text{  for  } \forall s \in \mathbb{R}, J_s \text{  is SFT-like on  } r \geq r_s \text{  for some  } r_s>1.\]
The smooth family $\{J_s\}_{s\in\mathbb{R}}$ exists because the space of $\mathcal{J}^{\text{SFT}}_{r\ge r_0}$ is contractible.

Let $x^\pm \in \mathcal{P}_1(H^\pm)$.
We will consider the moduli space of Floer cylinders from $x^-$ to $x^+$, or equivalently, the moduli space of smooth solutions $u:\R \times S^1 \to \widehat{W}$ satisfying
\begin{equation}
\label{equation: continuation cylinder}
\partial_s u + J_s(t,u)(\partial_t u - X_{H_s}(t,u)) =0, \lim_{s\to -\infty}u(s,\cdot)=x^-, \lim_{s\to +\infty}u(s,\cdot)=x^+.
\end{equation}
Let us denote the moduli space by $\mathcal{M}(x^-,x^+;\{H_s\},\{J_s\})$.

Now, we can define $m(x^-,x^+)$ as follows:
If $\mathcal{M}(x^-,x^+;\{H_s\},\{J_s\})$ is a finite set, then $m(x^-,x^+)$ is defined to be the parity of the number of elements in $\mathcal{M}(x^-,x^+;\{H_s\},\{J_s\})$. 
If $\mathcal{M}(x^-,x^+;\{H_s\},\{J_s\})$ is not a finite set, then $m(x^-,x^+)$ is defined to be zero. 

Then the continuation homomorphism $\Phi_{(H^+,J^+),(H^-,J^-)}:\CF(H^+,J^+)\to \CF(H^-,J^-)$ is defined to be the linear map satisfying
\[\Phi_{(H^+,J^+),(H-+,J^-)}(x^+) := \sum_{x^-\in \mathcal{P}_1(H^-)} m(x^-,x^+)x^-.\]
Again for generic choice of $\{J_s\}$, $\Phi_{(H^+,J^+),(H-+,J^-)}$ is a chain map. Thus, $\Phi_{(H^+,J^+),(H-+,J^-)}$ induces a linear map from $\HF(H^+,J^+)$ to $\HF(H^-,J^-)$. 
Moreover, by using the homotopy of homotopy argument, $\Phi_{(H^+,J^+),(H-+,J^-)}$ on the homology level does not depend on the choice of $\{H_s\}$ and $\{J_s\}$, for more explanation see \cite[Lemma 3.12]{Salamon99}.

For $u:\mathbb{R}\times S^1\to \widehat{W}$ solving \ref{equation: continuation cylinder}, 
\begin{equation}
\label{equation: Energy of continuation cylinder}
    E(u)=\mathcal{A}_{H^+}(x^+)-\mathcal{A}_{H^-}(x^-)+\int \partial_s H_s(t, u(s,t)).
\end{equation}
Since $\partial_s H_s$ is non-positive at every point, $m(x^-,x^+)=0$ for all $\cA_{H^+}(x^+)<\cA_{H^-}(x^-)$. Then, the restriction of $\Phi_{(H^+,J^+),(H-+,J^-)}$ on $\HF^{<a}(H^+,J^+)$ induces a linear map 
\[\Phi^{<a}_{(H^+,J^+),(H^-,J^-)}:\HF^{<a}(H^+,J^+) \to \HF^{<a}(H^-,J^-).\]

Now, we consider three pairs $(H_i, J_i)$ for $i =1, 2, 3$ such that 
\begin{itemize}
	\item $H_i$ is a 1-periodic non-degenerate Hamiltonian function that is linear at infinity, 
	\item $J_i$ is $H_i$-regular $(d\lambda)$-compatible almost complex structure SFT-like of $r \geq r_i$ for some $r_i >1$, and 
	\item $H_1(t,p) \leq H_2(t,p) \leq H_3(t,p)$ for all $(t, p) \in S^1 \times \widehat{W}$. 
\end{itemize}
Then, by choosing a proper family of smooth functions $\{H_s\}$ between $H_i$ and $H_{i+1}$ and by choosing a proper family of almost complex structure $\{J_s\}$, one can prove that
\[\Phi_{(H_1,J_1), (H_3, J_3)}=\Phi_{(H_2,J_2), (H_3,J_3)}\circ \Phi_{(H_1,J_1), (H_2,J_2)}\]
nn the homology level. 

Finally, we set $H_1 = H_2 = H_3 = H$ and $J_1 = J_3 \neq J_2$. 
Then, by choosing the trivial choice, $\Phi_{(H_1,J_1), (H_3,J_3)}$ is the identity morphism on the homology level.
It proves that $\Phi_{(H_1,J_1),(H_2,J_2)}$ and $\Phi_{(H_2,J_2),(H_3,J_3)}$ are linear isomorphisms, i.e., $\HF(H,J_1)$ and $\HF(H,J_2)$ are the same vector space.
We note that the above arguments work for $\HF^{<a}(H,J)$ too. 

Now, we can define the Hamiltonian Floer homology of $H$ if $H$ is a non-degenerate Hamiltonian function that is linear at infinity, without mentioning a specific almost complex structure $J$. 
Also, we can define the continuation homomorphism without mentioning specific almost complex structures $J_1, J_2$.
$\Phi_{H^-,H^+} : {\HF}(H^-) \to {\HF}(H^+)$ without mentioning a specific almost complex structure $J^-, J^+$.
\begin{definition}
	\label{def HF(H)}
	\mbox{}
	\begin{enumerate}
		\item Let $H$ be a non-degenerate Hamiltonian function that is linear at infinity. 
		The {\bf Hamiltonian Floer homology of $\boldsymbol{H}$} (resp.\ {\bf filtered Hamiltonian Floer homology of $\boldsymbol{H}$ at $a \in \mathbb{R}$}) is the vector space $\HF(H)$ (resp.\ $\HF^{<a}(H)$) defined as follows:
		\[\HF(H):=\HF(H,J), \quad \HF^{<a}(H):=\HF^{<a}(H,J)\]
		for any $H$-regular almost complex structure $J$.
        \item Let $H_1, H_2$ be non-degenerate Hamiltonian functions that are linear at infinity. We further assume $H_1(t,p) \le H_2(t,p)$ for all $t\in S^1$ and $p \in \widehat{W}$. The {\bf continuation homomorphism from $\boldsymbol{H_1}$ to $\boldsymbol{H_2}$} (resp.\ {\bf continuation homomorphism from $\boldsymbol{H_1}$ to $\boldsymbol{H_2}$ at $a \in \mathbb{R}$}) is the linear map $\Phi_{H_1, H_2}:\HF(H_1)\to\HF(H_2)$ (resp.\ $\Phi^{<a}:\HF^{<a}(H_1)\to\HF{<a}(H_2)$) defined as follows:
        \[\Phi_{H_1, H_2}:=\Phi_{(H_1,J_1),(H_2,J_2)},\quad \Phi^{<a}_{H_1, H_2}:=\Phi^{<a}_{(H_1,J_1),(H_2,J_2)}\]
        for any $H_i$-regular almost complex structure $J_i$ for $i=1,2$.
    \end{enumerate}
\end{definition}

Before discussing the definition of symplectic homology, we would like to point out that the filtered Hamiltonian Floer homologies of $H$ can be seen as a persistence module. 
We recall that a persistence module consists of two families, one is a family of vector spaces and the other is a family of linear maps. 
Since $\{\HF(H)^{<a}\}_{a \in \R}$ gives a family of vector spaces, it is enough to give a family of linear maps between the filtered Hamiltonian Floer homologies. 

We note that if $a < b$, then $\CF^{<a}(H,J) \subset \CF^{<b}(H,J)$ by definition. 
The inclusion map induces a linear map $\iota_H^{a,b}: \HF^{<a}(H)\to \HF^{<b}(H).$
Then, then the pair $\left(\{\HF^{<a}(H)\}_{a\in\R}, \{\iota_H^{a,b}\}_{a\leq b \in \R}\right)$ satisfies the conditions in Definition \ref{def persistence module}. 
Thus, one can define the following:
\begin{definition}
	\label{def persistence module for H}
	Let $H$ be a non-degenerate Hamiltonian function which is linear at infinity. 
	The {\bf persistence module for $H$}, denoted by $\boldsymbol{B(H)}$, is defined to be a persistence module consisting of 
	\begin{itemize}
		\item a family of vector spaces $\left\{B(H)_a:=\HF^{<a}(H)\right\}_{a \in \R}$, and
		\item a family of linear maps $\left\{\iota_{H}^{a,b}:B(H)_a \to B(H)_b\right\}_{a \leq b \in \R}$. 
	\end{itemize}
\end{definition}

\begin{lem}
	\label{lem persistence module} 
	Let $H: \widehat{W} \to \R$ be a non-degenerate Hamiltonian function such that $H$ is linear at infinity.
	Then, $B(H)$ is a persistence module of finite type with the lower semi-continuity property.
\end{lem}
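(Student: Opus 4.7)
The plan is to extract both conclusions directly from the finiteness of $\mathcal{P}_1(H)$, a fact already explained in Section \ref{subsection Hamiltonian Floer homology and the corresponding persistence module}: linearity of $H$ at infinity confines all 1-periodic orbits to the compact region $W \subset \widehat{W}$, and non-degeneracy forces them to be isolated. Hence $\CF(H)$ is a finite-dimensional $\mathbb{Z}_2$-vector space, and the set of action values
\[S := \set{\cA_H(\gamma) : \gamma \in \mathcal{P}_1(H)} \subset \R\]
is finite.

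For finite type, I would observe that $\CF^{<a}(H)$ is spanned by the generators $\gamma$ with $\cA_H(\gamma) < a$, so it depends on $a$ only through $S$: whenever $(c,d) \subset \R \setminus S$ and $c \le a \le b \le d$, we have $\CF^{<a}(H) = \CF^{<b}(H)$ as subcomplexes of $\CF(H)$, and consequently $\iota_H^{a,b}$ is the identity on $\HF^{<a}(H)$. This shows that for every $t$ in the cofinite set $\R \setminus S$ there is a neighborhood $U$ of $t$ on which all persistence maps are isomorphisms, which is exactly the local characterization of finite type given in Definition \ref{def finite type}. Moreover $\HF^{<a}(H) = 0$ for $a \le \min S$ and $\HF^{<a}(H) = \HF(H)$ for $a > \max S$, so only finitely many intervals can appear in the barcode decomposition.

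For the lower semi-continuity property, I would use the strict inequality in the filtration. Fix $t \in \R$ and choose $\epsilon > 0$ with $(t - \epsilon, t) \cap S = \emptyset$. Any generator $\gamma$ with $\cA_H(\gamma) = t$ fails $\cA_H(\gamma) < t$ and therefore does not contribute to $\CF^{<t}(H)$, while any generator with $\cA_H(\gamma) < t$ already has $\cA_H(\gamma) \le t - \epsilon$. Hence $\CF^{<s}(H) = \CF^{<t}(H)$ for all $s \in [t - \epsilon, t)$, and passing to homology yields
\[\HF^{<t}(H) = \varinjlim_{s < t} \HF^{<s}(H),\]
which is exactly the lower semi-continuity condition recorded before Lemma \ref{lem complete space}.

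The proof is essentially routine once the finiteness of $\mathcal{P}_1(H)$ is in hand; there is no substantive obstacle. The only delicate bookkeeping concerns the open/closed conventions at points of $S$: the strict inequality in $\CF^{<a}(H)$ forces the bars of the resulting decomposition to be of the form $(a_i, a_j]$ together with their unbounded analogues, which is precisely what is consistent with the stipulated lower semi-continuity.
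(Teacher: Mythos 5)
Your proposal is correct and follows essentially the same route as the paper: both arguments rest on the finiteness of $\mathcal{P}_1(H)$ (hence of $\CF(H)$ and of the set of action values), deduce finite type from the local constancy of the filtration away from the finitely many action values, and obtain lower semi-continuity from the strict inequality in the chain-level filtration. The only quibble is that with $(t-\epsilon,t)\cap S=\emptyset$ the identification $\CF^{<s}(H)=\CF^{<t}(H)$ is guaranteed only for $s\in(t-\epsilon,t)$ rather than $s\in[t-\epsilon,t)$, which does not affect the direct-limit conclusion.
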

\begin{proof}
	Because $H$ is non-degenerate, $\mathcal{P}_1(H)$ is a finite set.
	In other words, $\CF(H)$ is a finite-dimensional vector space. 
	It is enough to prove that $B(H)$ is a persistence module of finite type. The lower semi-continuity property also follows from the lower-semi continuity property on the chain level.
\end{proof}
By the Structure Theorem (Theorem \ref{thm Structure Theorem}) guarantees that $B(H)$ is equivalent to a barcode consisting of finitely many bars.

We end Section \ref{subsection Hamiltonian Floer homology and the corresponding persistence module} by proving Lemmas \ref{lem morphism} and \ref{lem sup norm}, which we will need in the later parts of the paper.

\begin{lem}
	\label{lem morphism} 
	Let $F, G: S^1 \times \widehat{W} \to \R$ be non-degenerate Hamiltonian functions such that 
	\begin{itemize}
		\item $F$ and $G$ are linear at infinity, and 
		\item $F(t,p) \leq G(t,p)$ for all $(t,p) \in S^1 \times \widehat{W}$. 
	\end{itemize}
	Then, a family of linear maps 
	\[\left\{\Phi^{<a}_{F,G}:B(F)_a=\HF^{<a}(F) \to B(G)_a=\HF^{<a}(G)\right\}_{a \in \R}\] 
	is a morphism between $B(F)$ and $B(G)$. 
\end{lem}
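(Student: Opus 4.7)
The plan is to verify directly, at the chain level, that the continuation chain maps restrict compatibly to every action window and then descend to the required commutative square on homology. The morphism property for the family $\{\Phi^{<a}_{F,G}\}_{a\in\R}$ is really just the statement that continuation plays well with the filtration by action, and once phrased on chains, it becomes almost tautological.

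First, I would fix a monotone homotopy of Hamiltonians $\{H_s\}_{s\in\R}$ with $H_s = G$ for $s\le -1$, $H_s = F$ for $s\ge 1$, and $\partial_s H_s\le 0$ (possible by the pointwise inequality $F\le G$), together with a homotopy $\{J_s\}$ of SFT-like almost complex structures joining regular choices $J_F, J_G$. These data define a chain map $\Phi_{F,G}\colon \CF(F,J_F)\to \CF(G,J_G)$ as recalled in Section \ref{subsection Hamiltonian Floer homology and the corresponding persistence module}. Using the energy identity \eqref{equation: Energy of continuation cylinder}, namely $E(u)=\cA_F(x^+)-\cA_G(x^-)+\int \partial_s H_s(t,u)\,ds\,dt$, together with $E(u)\ge 0$ and $\partial_s H_s\le 0$, one gets $\cA_G(x^-)\le \cA_F(x^+)$. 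Hence for any $a\in\R$, the chain map $\Phi_{F,G}$ sends $\CF^{<a}(F)$ into $\CF^{<a}(G)$. This gives well-defined filtered maps $\Phi^{<a}_{F,G}\colon \HF^{<a}(F)\to \HF^{<a}(G)$ on homology, which by Definition \ref{def HF(H)} do not depend on the auxiliary choices $J_F, J_G, \{H_s\}, \{J_s\}$.

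Next, I would check the commutativity of the square
\[
\begin{tikzcd}
\HF^{<a}(F) \ar[r,"\iota_F^{a,b}"] \ar[d,"\Phi^{<a}_{F,G}"'] & \HF^{<b}(F) \ar[d,"\Phi^{<b}_{F,G}"] \\
\HF^{<a}(G) \ar[r,"\iota_G^{a,b}"'] & \HF^{<b}(G)
\end{tikzcd}
\]
for every $a\le b$. Working with the fixed homotopy data above, both composites are induced by the same chain-level composition. On the one hand, $\Phi_{F,G}|_{\CF^{<b}(F)}\circ \iota_F^{a,b}$ is just $\Phi_{F,G}$ restricted to the subcomplex $\CF^{<a}(F)$, post-composed with the inclusion $\CF^{<a}(G)\hookrightarrow \CF^{<b}(G)$; on the other hand, $\iota_G^{a,b}\circ \Phi_{F,G}|_{\CF^{<a}(F)}$ is literally the same composition. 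Passing to homology yields the desired commuting square, so $\{\Phi^{<a}_{F,G}\}_{a\in\R}$ is a morphism in the sense of Definition \ref{def equivalece}.

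There is no genuine obstacle here; the only care needed is to make the two verifications—filtration preservation and commutation with inclusion—using the \emph{same} choice of homotopy data $\{H_s\},\{J_s\}$, so that both composites in the square are literally equal on the chain level rather than merely chain-homotopic. The independence from the auxiliary data, which was already established in Section \ref{subsection Hamiltonian Floer homology and the corresponding persistence module}, then guarantees that the resulting morphism of persistence modules depends only on $F$ and $G$.
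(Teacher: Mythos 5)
Your proposal is correct and follows essentially the same route as the paper: the paper's proof likewise observes that $\Phi^{<a}_{F,G}$ is the restriction of the chain-level continuation map (which preserves the action filtration by the energy identity \eqref{equation: Energy of continuation cylinder} and monotonicity of the homotopy) and that continuation commutes with the inclusions $\CF^{<a}\hookrightarrow\CF^{<b}$ at the chain level. You simply spell out the details that the paper's two-line argument leaves implicit.
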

\begin{proof}
	We note that $\Phi^{<a}_{F,G}$ is defined as the restriction of $\Phi_{F,G}$. 
	We also note that $\iota_F^{a,b}$ and $\iota_G^{a,b}$ are induced from the inclusion maps. Since the inclusion map and continuation homomorphism commutes at the chain level, the following equation holds:
	\begin{equation}
		\label{naturality}
  \Phi^{<b}_{F,G}\circ\iota_{F}^{a,b}=\iota_{G}^{a,b}\circ\Phi^{<a}_{F,G}.
	\end{equation}
	It completes the proof.
\end{proof}

\begin{lem}
	\label{lem sup norm}
	\mbox{}
	\begin{enumerate}
		\item Let $F:S^1 \times \widehat{W} \to \R$ be a non-degenerate Hamiltonian function which is linear at infinity.
		For any constant $c \in \R$, $B(F+c)$ is a $c$-shift of $B(H)$, i.e., $B(F+c)_a = \left(B(F)[c]\right)_a$ for all $a \in \R$. 
		\item Let $F, G: S^1 \times \widehat{W} \to \R$ be non-degenerate Hamiltonian functions of the same slope such that $F$ and $G$ are linear at infinity, especially outside of $W_{r_0}$, for the same $r_0 >1$.
		Then, $B(F)$ and $B(G)$ are $\|F-G\|_{\sup, W_{r_0}}$-interleaved, where 
		\[\|H\|_{\sup, W_{r_0}}:= \sup\left\{|H(t,x)| \Big\vert (t,x) \in S^1 \times W_{r_0}\right\}.\]
	\end{enumerate}
\end{lem}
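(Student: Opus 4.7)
The plan for part (1) is a direct computation. Since a constant shift does not affect the Hamiltonian vector field, $X_{F+c} = X_F$, so $\mathcal{P}_1(F+c) = \mathcal{P}_1(F)$ and $\CF(F+c) = \CF(F)$ as vector spaces, with the same differential for any regular almost complex structure. From the formula in Definition \ref{def action functional on CF(H)},
\[\cA_{F+c}(\gamma) = \int_\gamma \lambda - \int_0^1 (F+c)(t,\gamma(t))\,dt = \cA_F(\gamma) - c,\]
so $\CF^{<a}(F+c) = \CF^{<a+c}(F)$. Passing to homology yields $B(F+c)_a = B(F)_{a+c} = (B(F)[c])_a$, and the inclusion-induced persistence maps are compatibly identified, proving (1).

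For part (2), set $\delta := \|F-G\|_{\sup, W_{r_0}}$. Outside $W_{r_0}$ both Hamiltonians take the form $Tr - C_F$ and $Tr - C_G$ respectively, so $F - G$ is the constant $C_G - C_F$ there; evaluating at any point of $\partial W \times \{r_0\} \subset W_{r_0}$ bounds $|C_G - C_F| \leq \delta$, hence $|F - G| \leq \delta$ on all of $\widehat{W}$. The pointwise inequalities $F \leq G + \delta$ and $G \leq F + \delta$ therefore hold, and by Definition \ref{def HF(H)} together with Lemma \ref{lem morphism} we obtain persistence module morphisms
\[\Psi_1 := \Phi_{F,G+\delta} : B(F) \to B(G+\delta), \qquad \Psi_2 := \Phi_{G,F+\delta} : B(G) \to B(F+\delta).\]
Invoking part (1) to identify $B(G+\delta) = B(G)[\delta]$ and $B(F+\delta) = B(F)[\delta]$, these are precisely the morphisms required by Definition \ref{def interleaving distance}.

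It remains to verify $\Psi_2[\delta] \circ \Psi_1 = \Phi^{2\delta}_{B(F)}$ together with its symmetric counterpart. By the transitivity of continuation homomorphisms along the monotone chain $F \leq G+\delta \leq F+2\delta$ established in Section \ref{subsection Hamiltonian Floer homology and the corresponding persistence module}, the composition coincides with $\Phi_{F,F+2\delta}$. The main step, and the only real obstacle, is to identify this continuation between two Hamiltonians differing by a constant with the persistence shift morphism, i.e., to show $\Phi^{<a}_{F,F+2\delta} = \iota_F^{a,a+2\delta}$. I would verify this via the trivial monotone homotopy $H_s := F + c(s)$ where $c : \R \to [0,2\delta]$ is smooth, non-increasing, equal to $2\delta$ for $s \leq -1$, and equal to $0$ for $s \geq 1$. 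Since $X_{H_s} = X_F$ for all $s$, the energy identity \eqref{equation: Energy of continuation cylinder} simplifies to $E(u) = \cA_F(x^+) - \cA_F(x^-)$ for any continuation cylinder from $x^-$ to $x^+$, and a standard transversality and compactness argument forces the rigid contributions to come from constant cylinders at coinciding endpoints. This realises $\Phi_{F,F+2\delta}$ as the identity on chain generators, so that the induced filtered map is precisely the inclusion $\iota_F^{a,a+2\delta}$. The argument for the other composition is identical after swapping the roles of $F$ and $G$.
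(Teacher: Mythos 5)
Your proof is correct and follows essentially the same route as the paper: constant shifts of the Hamiltonian shift the action filtration, Lemma \ref{lem morphism} supplies the two interleaving morphisms, and the triangle identities reduce to identifying $\Phi_{F,F+2\delta}$ with the structure map $\iota_F^{a,a+2\delta}$. You additionally spell out two points the paper dismisses as "easy to observe" — that $|F-G|\le\delta$ extends from $W_{r_0}$ to all of $\widehat{W}$ via the linearity at infinity, and that the continuation map for the constant homotopy is the inclusion via the zero-energy argument — both of which are correct.
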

\begin{proof}
	For the first item, we would like to point out that the chain complexes for $F$ and $F+c$ are generated by the same generators, i.e., $\mathcal{P}_1(F) = \mathcal{P}_1(F+c)$.
	Moreover, it is easy to observe that if $J$ is an almost complex structure satisfying all necessarily conditions, then the induced differential maps on $CF(F)$ and $CF(F+c)$ are the same.  
	
	By Definition \ref{def action functional on CF(H)}, for any $v \in CF(F) = CF(F+c)$, $\cA_F(v) = \cA_{F+c}(v) + c$. 
	Thus, $CF^{<a}(F+c) = CF^{<a+c}(F)$ for any $a \in \R$. 
	It concludes that $HF^{<a}(F+c) = HF^{a+c}(F)$, i.e., $B(F+c)_a = \left(B(F)[c]\right)_a$ for all $a \in \R$.
	
	For the second item, for convenience, let $\delta$ be the supreme norm of $F-G$ on $W_{r_0}$, i.e.,
	\[\delta:= \|F-G\|_{\sup, W_{r_0}}.\]
	Then, it is easy to observe that 
	\[G(t,x) \leq F(t,x) + \delta, F(t,x) \leq G(t,x) +\delta \text{  for all  } (t,x) \in S^1 \times \widehat{W}.\]
	Thanks to Lemma \ref{lem morphism}, there are morphisms of persistence modules 
	\[\Phi_{F,G+\delta} : B(F) \to B(G+\delta), \Phi_{G,F+\delta}: B(G) \to B(F+\delta).\]
	Moreover, since $\delta \geq 0$, it is easy to observe that 
	\[\Phi^{<a}_{F,F+2\delta} = \iota_F^{a, a+2\delta}, \Phi^{<a}_{G,G+2\delta} = \iota_G^{a, a+2\delta} \text{  for all  } a \in \R.\]
	
	Finally, we see the following diagrams 
	\begin{equation*}
		\begin{tikzcd}
			B(F)_a \ar[r,"\Phi^{<a}_{F,G+\delta}"] \ar[rr, bend right=20, "\iota_F^{a, a+2\delta}"'] & B(G+\delta)_a \ar[r, "\Phi^{<a}_{G+\delta,F+2\delta}"] & B(F+2\delta)_a
		\end{tikzcd}, \\
		\begin{tikzcd}
			B(G)_a \ar[r,"\Phi^{<a}_{G,F+\delta}"] \ar[rr, bend right=20, "\iota_G^{a, a+2\delta}"'] & B(F+\delta)_a \ar[r, "\Phi^{<a}_{F+\delta,G+2\delta}"] & B(G+2\delta)_a
		\end{tikzcd},
	\end{equation*}
	for all $a \in \R$.
	
	Now, the first item and Equation \eqref{naturality} complete the proof of the second item.
\end{proof}

\begin{remark}
    More precisely, two persistence modules $B(F)$ and $B(G)$ are $\norm{F-G}_\text{Hofer}$-interleaved through the continuation homomorphism. The proof uses inequality \eqref{equation: Energy of continuation cylinder} and is identical to the closed symplectic manifold case. But still, Lemma \ref{lem sup norm} is enough for the statements in this work.
\end{remark}

\subsection{A barcode entropy from the symplectic homology}
\label{subsection a barcode entropy from the symplectic homology}
In this subsection, we briefly review the definition of symplectic homology.
Then, we define a barcode entropy from symplectic homology. 
For more details on symplectic homology, we refer the reader to \cite{Floer-Hofer94, Viterbo99, Seidel08}. Since we deal with symplectic homology also for the non-contractible orbits, we strongly recommend the reader to also see \cite{Biran-Polterovich-Salamon03, Weber06}.

As mentioned in \cite{Viterbo99}, the symplectic homology of $(\widehat{W}, \lambda)$ is defined to be a direct limit of a direct system. We define the direct system first. 

We consider the following set:
\[\mathcal{D}:=\left\{\text{a nondegenerate Hamiltonian  }H | H(t,p) <0 \text{  for all  } (t,p) \in S^1 \times W\right\}.\]
Then, one can define a partial order on $\mathcal{D}$ as follows:
\[H_1 \prec H_2 \iff H_1(t,x) \le H_2(t,x) \text{  for all  } (t,x)\in S^1\times \widehat{W}.\]
We note that if $H_1 \prec H_2$, then there exist a continuation homomorphism from $H_1$ to $H_2$:
\[\Phi_{H_1,H_2}: \HF(H_1) \to \HF(H_2) \text{  and  } \Phi^{<a}_{H_1,H_2}: \HF^{<a}(H_1) \to \HF^{<a}(H_2) \text{  for all  } a \in \R.\]

Then, we define the following:
\begin{definition}
	\label{def symplectic homology}
	\mbox{}
	\begin{enumerate}
		\item The {\bf symplectic homology of $\boldsymbol{(W, \lambda)}$}, denoted by $\boldsymbol{\SH(W,\lambda)}$, is defined to be the following direct limit:
		\[\SH(W,\lambda)=\varinjlim_{(\mathcal{D}, \prec)} \HF(H).\]
		\item A {\bf filtered symplectic homology of $\boldsymbol{(W,\lambda)}$ at $\boldsymbol{a \in \R}$}, denoted by $\boldsymbol{\SH^{<a}(W,\lambda)}$, is defined to be the following direct limit:
		\[\SH^{<a}(W,\lambda)=\varinjlim_{(\mathcal{D}, \prec)} \HF^{<a}(H).\]
	\end{enumerate}
\end{definition}
  
By Equation \eqref{naturality} and the universal property of direct limits, for any $a < b \in \R$, $\{\iota^{a,b}_H\}_{(H,J) \in \mathcal{D}}$ induces maps 
\[\iota^{a,b}:\SH^{<a}(W,\lambda)\to \SH^{<b}(W),\]
such that the following diagram commutes for any $H$:
\[\begin{tikzcd}\HF^{<a}(H) \arrow{r}{\iota_H^{a,b}} \arrow{d}& \HF^{<b}(H) \arrow{d}\\
	\SH^{<a}(W,\lambda) \arrow{r}{\iota^{a,b}} & \SH^{<b}(W,\lambda).\end{tikzcd}\]
\begin{remark}
Symplectic homology $\SH(W,\lambda)$ can be computed as the direct limit of a direct system 
\[\left(\left\{\SH^{<a}(W, \lambda)\right\}_{a \in \R}, \left\{\iota^{a,b}\right\}_{a<b \in \R}\right),\]
i.e. $\SH(W,\lambda) =\varinjlim_{a \in \R} \SH^{<a}(W,\lambda)$. It can be proved by the commutativity of direct limits,
\begin{align*}
\SH(W,\lambda):=\varinjlim_{H\in\mathcal{D}}\HF(H)
=\varinjlim_{H\in\mathcal{D}}\varinjlim_{a\in\R}\HF^{<a}(H)
=\varinjlim_{a\in\R} \varinjlim_{H\in\mathcal{D}}\HF^{<a}(H)
=:\varinjlim_{a\in\R}\SH^{<a}(W,\lambda).
\end{align*}
For real number $b\in \R$, taking a direct limit among $a<b$, the commutativity of direct limits also shows the lower semi-continuity property for each $b$.
\end{remark}

Similar to Definition \ref{def persistence module for H}, we define a persistence module corresponding to $\SH(W, \lambda)$. 
\begin{definition}
	\label{def persistence module for SH}
	The {\bf persistence module for $\boldsymbol{\SH(W,\lambda)}$}, denoted by $\boldsymbol{B_{SH}(W,\lambda)}$, is defined to be a persistence consisting of 
	\begin{itemize}
		\item a family of vector spaces $\left\{B_{SH}(W,\lambda)_a:=\SH^{<a}(H)\right\}_{a \in \R}$, and
		\item a family of linear maps $\left\{\iota^{a,b}:B_{SH}(W,\lambda)_a \to B_{SH}(W,\lambda)_b\right\}_{a <b \in \R}$. 
	\end{itemize}
\end{definition}

Now, we can prove Lemma \ref{lem direct limit of persistence modules 1}.
\begin{lem}
	\label{lem direct limit of persistence modules 1}
	The persistence module for $\SH(W, \lambda)$ is a direct limit of a direct system $\left\{B(H) | (H,J) \in \left(\mathcal{D}, \prec\right)\right\}$, i.e., 
	\[B_{SH}(W,\lambda) = \varinjlim_{(\mathcal{D}, \prec)}B(H).\]
\end{lem}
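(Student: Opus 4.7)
The plan is to show that directed colimits of persistence modules are computed pointwise, and then to identify the resulting pointwise colimit with $B_{\SH}(W,\lambda)$ by using the defining formula $\SH^{<a}(W,\lambda) = \varinjlim_{H \in \mathcal{D}} \HF^{<a}(H)$.

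First I would observe that $\{B(H)\}_{H \in \mathcal{D}}$ is indeed a direct system in the category of persistence modules: by Lemma \ref{lem morphism}, for each pair $H_1 \prec H_2$ the continuation maps $\{\Phi^{<a}_{H_1,H_2}\}_{a \in \R}$ assemble into a morphism $B(H_1) \to B(H_2)$ of persistence modules; the transitivity property (compositions of continuation maps agree with the direct continuation map on the level of homology) ensures this is a functor from $(\mathcal{D},\prec)$ to persistence modules.

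Next I would construct the comparison morphism. For each $a \in \R$ the universal property of directed colimits of vector spaces produces a canonical isomorphism $\varinjlim_{H \in \mathcal{D}} \HF^{<a}(H) \xrightarrow{\cong} \SH^{<a}(W,\lambda) = B_{\SH}(W,\lambda)_a$, by the very Definition \ref{def symplectic homology}. To verify that these isomorphisms assemble into an isomorphism of persistence modules, I would check compatibility with the structure maps: for $a < b$, the diagram
\[
\begin{tikzcd}
\varinjlim_H \HF^{<a}(H) \ar[r] \ar[d, "\cong"'] & \varinjlim_H \HF^{<b}(H) \ar[d, "\cong"] \\
\SH^{<a}(W,\lambda) \ar[r, "\iota^{a,b}"] & \SH^{<b}(W,\lambda)
\end{tikzcd}
\]
commutes, where the top arrow is the map induced on colimits by the family $\{\iota_H^{a,b}\}_H$. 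This commutativity follows directly from the defining square exhibited right before Definition \ref{def persistence module for SH}: $\iota^{a,b}$ is characterized by being compatible with the cone maps $\HF^{<a}(H) \to \SH^{<a}(W,\lambda)$ and $\HF^{<b}(H) \to \SH^{<b}(W,\lambda)$, which is exactly the characterization of the top arrow under the pointwise identifications.

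Finally I would verify the universal property in the persistence module category. Given any persistence module $(V,\pi)$ together with a compatible family of morphisms $\varphi^H : B(H) \to V$, the vector space universal property yields, for each $a$, a unique linear map $\psi_a : B_{\SH}(W,\lambda)_a \to V_a$ factoring all the $\varphi^H_a$. That $\{\psi_a\}_a$ is itself a morphism of persistence modules (i.e.\ commutes with $\iota^{a,b}$ and $\pi_{a,b}$) follows by uniqueness in the vector space universal property: both $\psi_b \circ \iota^{a,b}$ and $\pi_{a,b} \circ \psi_a$ factor the family $\{\pi_{a,b} \circ \varphi^H_a\}_H$, so they must coincide. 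I do not expect any serious obstacle here; the proof is essentially a formal manipulation of universal properties, with the only substantive input being Lemma \ref{lem morphism} (ensuring the direct system lives in persistence modules) and the commutative square preceding Definition \ref{def persistence module for SH} (ensuring $\iota^{a,b}$ is the colimit of $\{\iota_H^{a,b}\}$).
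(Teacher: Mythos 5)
Your argument is correct and follows exactly the route the paper takes: the paper's proof is only a sketch stating that $B_{\SH}(W,\lambda)$ satisfies the universal property because each $B_{\SH}(W,\lambda)_a$ is itself defined as a direct limit, and your pointwise verification (using Lemma \ref{lem morphism} for the direct system and the commuting square before Definition \ref{def persistence module for SH} for compatibility of $\iota^{a,b}$) simply fills in the details the paper omits. No discrepancy to report.
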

\begin{proof}[Sketch of the proof]
	One can directly show that $B_{SH}(W,\lambda)$ satisfies the universal property because for all $a \in \R$, $B_{SH}(W,\lambda)_a$ is defined as a direct limit. We omit the details.
\end{proof}

We would like to point out the following: 
assuming the non-degeneracy of $(Y,\alpha)$, $B_{SH}(W,\lambda)$ is a pfd persistence module, i.e., for all $a \in \R$, $B_{SH}(W,\lambda)_a = \SH^{<a}(W,\lambda)$ is finite dimensional. 
Thus, by applying the Structure Theorem (Theorem \ref{thm Structure Theorem}), one can assume that $B_{SH}(W, \lambda)$ is equivalent to a barcode. 
Now, we can define one of the main characters of our paper. 

\begin{definition}
	\label{def SH barcode entropy}
	\mbox{}
	\begin{enumerate}
		\item Let $B$ be a barcode. For any positive real number $\epsilon$, $\boldsymbol{n_\epsilon(B)}$ is defined as 
		\[\mathbb{Z}_{\geq 0} \cup \{\infty\} \ni n_\epsilon(B):= \text{  the number of bars in  } B \text{  whose length is greater than  } \epsilon.\]
		\item The {\bf $\boldsymbol{\epsilon}$-symplectic homology barcode entropy of $\boldsymbol{(W,\lambda)}$}, denoted by $\boldsymbol{\hbar^{\SH}_\epsilon(W,\lambda)}$, is defined as 
		\[\hbar^{\SH}_\epsilon(W,\lambda):= \lim_{T \to \infty} \frac{1}{T} \log \Big(n_\epsilon\big(\tru(B_{\SH}(W, \lambda), T)\big) \Big).\]
		\item The {\bf symplectic homology barcode entropy of $\boldsymbol{(W,\lambda)}$}, denoted by $\boldsymbol{\hbar^{\SH}(W,\lambda)}$, is defined as
		\[\hbar^{\SH}(W,\lambda):= \lim_{\epsilon \searrow 0} \hbar^{\SH}_\epsilon(W, \lambda).\]
	\end{enumerate}
\end{definition}

The ($\epsilon$-)symplectic homology barcode entropy, which is not clear to be finite from its definition, is finite because it is bounded above by the topological entropy of the Reeb flow on the contact boundary, see theorem \ref{thm vs topological entropy formal}. Recall that the topological entropy is always finite for smooth flows. Also, symplectic homology barcode entropy is not a zero function, see Remark \ref{rmk infinite bars}.

Yet ($\epsilon$-)symplectic homology barcode entropy is defined as the entropy of Liouivlle domains. Later, in section \ref{subsection fillings}, we will prove that the two symplectic homology barcode entropy agree when the contact boundary are the same. After that, we can define symplectic homology barcode entropy of $(Y,\alpha)$, or shortly $\SH$-barcode entropy of $(Y,\alpha)$, for contact manifolds with Liouville fillings.

\begin{remark}[Alternative defintion]
    For concordance with \cite{Ginzburg-Gurel-Mazzucchelli22}, we introduce another definition for $\hbar^{\SH}_\epsilon(W,\lambda)$. Let $b_\epsilon(B_{\SH}(W,\lambda), T)$ be the number of bars in $B$, with their length exceeding $\epsilon$ and the left-endpoint being no more than $T$. Then, by definition of truncation,
    \[b_\varepsilon(B_{\SH}(W,\lambda),T-\varepsilon)\le
    n_\varepsilon(\tru(B_{\SH}(W,\lambda),T)
    \le b_\varepsilon(B_{\SH}(W,\lambda),T).\]
    Therefore, the exponential growth rates of $b_\varepsilon(B_{\SH}(W,\lambda),T)$ and
    $n_\epsilon\big(\tru(B_{\SH}(W, \lambda), T))$ under $T$ agree.
\end{remark}

\begin{remark}[Infinite bars]
\label{rmk infinite bars}
In \cite{Meiwes18}, Meiwes studied the positivity of $\,\Gamma(\SH(W,\lambda))$, the exponential growth rate of the filtered symplectic homology, see \cite[Definition 4.2]{Meiwes18}. From the barcode point of view, this number can be interpreted as the exponential growth rate under $T\in \R$ of the number of infinite bars having left-endpoints no more than $T$. This number can be smaller than any ($\varepsilon$-)symplectic homology barcode entropy. However, the invariant is more `rigid' than the barcode entropy in the following sense.

For a given Liouville domain $(W, \lambda)$ and let $(W_f,\lambda)$ be a Liouville domain defined by
\[W_f=\widehat{W} \setminus \set{(r,x)|r>f(x), x\in \partial W}, \lambda_f=\lambda|_{W_f}.\]
Note that the contact boundary of $W_f$ is $(Y, f\alpha)$. Then, by \cite[Corollary 9.3]{Meiwes18}, $\,\Gamma(\SH(W, \lambda))$ is positive if and only if $\, \Gamma(\SH(W_f,\lambda_f))$ is positive. With this fact, the author found examples of Liouville domains with positive $\Gamma(\SH(W,\lambda))$, see \cite[Chapter12]{Meiwes18} for examples including 2n-dimensional Ball with exotic Weinstein structure. These examples also assure the existence of Liouville-fillable contact structures on each of which every contact form has positive barcode entropy.

\end{remark}

\subsection{Hamiltonian Floer homologies of degenerate Hamiltonian functions}
\label{subsection Hamiltonian Floer homologies of degenerate Hamiltonian functions}
In Section \ref{subsection Hamiltonian Floer homologies of degenerate Hamiltonian functions}, we only considered {\em non-degenerate} Hamiltonian functions. 
In this subsection, we introduce the Hamiltonian Floer homologies of {\em degenerate} Hamiltonian functions when the Hamiltonian functions are linear at infinity, in order to define another barcode entropy in Section \ref{subsection barcode entropy of a Hamiltonian function}.

Throughout this and the following subsections, let $H: S^1 \times \widehat{W} \to \R$ be a Hamiltonian function such that  $H$ is linear at infinity, especially outside of $W_{r_0}$ for some $r_0 >1$. We would like to point out that $H$ does not need to be non-degenerate in this and the following subsections. 
Now, we define the Hamiltonian Floer homology of $H$ as similar to Definition \ref{def symplectic homology}.

We recall that in Definition \ref{def symplectic homology}, $\SH(W,\lambda)$ is defined to be a direct limit.
Similarly, we define the Hamiltonian Floer homology of $H$ as a direct limit. 
To do so, we need to fix a direct system first. 

We consider the following set:
\[\mathcal{D_H}:=\left\{\text{a nondegenerate Hamiltonian  }G | G(t,p) < H(t,p) \text{  for all  } (t,p) \in S^1 \times W\right\}.\]
Then, one can define a partial order on $\mathcal{D}$ as follows:
\[H_1 \prec H_2 \iff G_1(t,x) \le H_2(t,x) \text{  for all  } (t,x)\in S^1\times \widehat{W}.\]
We note that if $H_1 \prec H_2$, then there exist a continuation homomorphism from $H_1$ to $H_2$:
\[\Phi_{H_1,H_2}: \HF(H_1) \to \HF(H_2) \text{  and  } \Phi^{<a}_{H_1,H_2}: \HF^{<a}(H_1) \to \HF^{<a}(H_2) \text{  for all  } a \in \R.\]

Now, one can define the Hamiltonian Floer homology of $H$, even if $H$ is degenerate, as follows:
\begin{definition}
	\label{def Hamiltonian Floer homology}
	\mbox{}
	\begin{enumerate}
		\item The {\bf Hamiltonian Floer homology of $\boldsymbol{H}$}, denoted by $\boldsymbol{\HF(H)}$, is defined to be the following direct limit:
		\[\HF(H):=\varinjlim_{(\mathcal{D}_H, \prec)} \HF(F).\]
		\item A {\bf filtered Hamiltonian Floer homology of $\boldsymbol{H}$ at $\boldsymbol{a \in \R}$}, denoted by $\boldsymbol{\HF^{<a}(H)}$, is defined to be the following direct limit:
		\[\HF^{<a}(H):=\varinjlim_{(\mathcal{D}_H, \prec)} \HF^{<a}(F).\]
	\end{enumerate}
\end{definition}

We note that if $H$ is non-degenerate, then $H$ is the unique maximal element of $\mathcal{D}_H$. 
Thus, Definitions \ref{def HF(H)} and \ref{def Hamiltonian Floer homology} give the same Hamiltonian Floer homology for non-degenerate $H$. 

As similar to the case of $\SH(W,\lambda)$, one can easily observe that Equation \eqref{naturality}, together with the universal property of direct limits, induces a family of linear maps $\left\{\iota_H^{a,b}\right\}_{a\leq b\in\R}$ such that the following diagram commutes for every $(F,J) \in \mathcal{D}_H$:
\[\begin{tikzcd}\HF^{<a}(F) \arrow{r}{\iota_F^{a,b}} \arrow{d}& \HF^{<b}(F) \arrow{d}\\
	\HF^{<a}(H) \arrow{r}{\iota_H^{a,b}} & \HF^{<b}(H).\end{tikzcd}\]

As usual, we define the corresponding persistence module: 
\begin{definition}
	\label{def persistence module for degenerate H}
	The {\bf persistence module for $H$}, denoted by $\boldsymbol{B(H)}$, is defined to be a persistence module consisting of 
	\begin{itemize}
		\item a family of vector spaces $\left\{B(H)_a:=\HF^{<a}(H)\right\}_{a \in \R}$, and
		\item a family of linear maps $\left\{\iota_{H}^{a,b}:B(H)_a \to B(H)_b\right\}_{a \leq b \in \R}$. 
	\end{itemize}
\end{definition}

Similar to Lemma \ref{lem direct limit of persistence modules 1}, the following Lemma is easy to prove:
\begin{lem}
	\label{lem direct limit of persistence modules 2}
	The persistence module of $H$ is a direct limit of a direct system $\{B(F) | (F,J) \in (\mathcal{D}_H, \prec)\}$, i.e.,
	\[B(H)=\varinjlim_{(\mathcal{D}_H, \prec)}B(F).\]
\end{lem}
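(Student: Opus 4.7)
My plan is to verify the universal property of direct limits in the category of persistence modules, proceeding in exactly the same spirit as the sketch of Lemma \ref{lem direct limit of persistence modules 1}. The key conceptual point is that direct limits of persistence modules are computed levelwise, since a persistence module is a functor from $(\R, \leq)$ to $\mathbf{Vect}_\mathbb{k}$ and colimits in such functor categories are pointwise. So the argument splits into (i) checking that the underlying vector spaces match at each $a$, (ii) checking that the transition maps $\iota_H^{a,b}$ are the correct ones, and (iii) verifying the universal property.

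For (i), this is immediate: by Definition \ref{def Hamiltonian Floer homology} we have
\[B(H)_a = \HF^{<a}(H) = \varinjlim_{(\mathcal{D}_H, \prec)} \HF^{<a}(F) = \varinjlim_{(\mathcal{D}_H, \prec)} B(F)_a\]
at every filtration level $a \in \R$. For (ii), I would invoke Equation \eqref{naturality}, which says that the continuation homomorphisms $\Phi^{<a}_{F,F'}$ intertwine the inclusion-induced maps $\iota_F^{a,b}$ and $\iota_{F'}^{a,b}$ for $F \prec F'$. Hence $\{\iota_F^{a,b}\}_{F \in \mathcal{D}_H}$ is a morphism of directed systems, and the induced map between direct limits coincides with $\iota_H^{a,b}$ by the commutative square displayed just before Definition \ref{def persistence module for degenerate H}.

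For (iii), suppose $(C, \pi^C)$ is any persistence module equipped with a cocone of persistence-module morphisms $\Psi_F : B(F) \to C$ for $F \in \mathcal{D}_H$, compatible with the continuation maps in the sense that $\Psi_{F'} \circ \Phi^{<a}_{F,F'} = \Psi_F$ at every level $a$. At each fixed $a$, the levelwise universal property of direct limits of vector spaces produces a unique linear map $\Psi_a : B(H)_a \to C_a$ factoring every $(\Psi_F)_a$. To see that the family $\{\Psi_a\}_{a \in \R}$ assembles into a morphism of persistence modules, one checks that $\pi^C_{a,b} \circ \Psi_a$ and $\Psi_b \circ \iota_H^{a,b}$ agree after precomposing with the natural map $B(F)_a \to B(H)_a$ for every $F \in \mathcal{D}_H$, which follows because each $\Psi_F$ is itself a morphism of persistence modules; the uniqueness part of the universal property at level $a$ then forces the two maps $B(H)_a \to C_b$ to coincide. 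Uniqueness of the assembled morphism $\Psi : B(H) \to C$ is immediate from the uniqueness of each $\Psi_a$.

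The only step requiring any care is the compatibility check in (iii), but this is pure functorial bookkeeping and is not really different from the corresponding verification in Lemma \ref{lem direct limit of persistence modules 1}. As in the proof of that lemma, I would omit the routine diagram chase.
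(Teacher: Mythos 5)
Your proposal is correct and matches the paper's approach: the paper gives no written proof of this lemma, remarking only that it follows just as in Lemma \ref{lem direct limit of persistence modules 1}, whose sketch likewise rests on the fact that each $B(H)_a$ is defined levelwise as a direct limit so that the universal property can be checked pointwise. Your write-up simply fills in the routine naturality and uniqueness details that the paper omits.
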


We note that since a direct limit of finite dimensional vector spaces could be an infinite-dimensional vector space, we cannot claim that $B(H)$ is a pfd persistence module. 
Thus, we cannot apply the Structure Theorem in order to show that $B(H)$ is equivalent to a barcode. 
In the rest of Section \ref{subsection Hamiltonian Floer homologies of degenerate Hamiltonian functions}, we prove that $B(H)$ is equivalent to a barcode, by utilizing Lemma \ref{lem complete space}. Note that $B(H)$ has the lower semi-continuity property again by the commutativity of direct limits.

In order to prove that, we need to choose a cofinal sequence $\{H_i \in \mathcal{D}_H\}_{i \in \mathbb{N}}$.
Without loss of generality, we assume that the chosen cofinal sequence satisfies the following conditions: 
\begin{enumerate}
	\item[(i)] for all $i \in \mathbb{N}$, $H_i \prec H_{i+1}$, 
	\item[(ii)] for all $i \in \mathbb{N}$, $H_i$ and $H$ coincide each other on $\widehat{W} \setminus W_{r_0}$, and
	\item[(iii)] for all $i \in \mathbb{N}$, $\|H-H_i\|_{\sup, W_{r_0}} \leq \tfrac{1}{2^i}$.
\end{enumerate}

Now, one can prove Lemma \ref{lem direct limit = Cauchy limit}.
\begin{lem}
	\label{lem direct limit = Cauchy limit}
	Let $\left\{H_i\right\}_{i \in \mathbb{N}}$ be a cofinal sequence in $\mathcal{D}_H$ satisfying (i)--(iii). 
	Then, $B(H)$ is the limit of the sequence of persistence modules $\left\{B(H_i)\right\}_{i \in \mathbb{N}}$ with respect to the interleaving distance.  
\end{lem}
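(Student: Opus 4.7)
The plan is to show that $\{B(H_i)\}$ is a Cauchy sequence in the interleaving distance, invoke the completeness statement of Lemma \ref{lem complete space} to extract a unique limit in $(cid)\cap(qtame)\cap(lower)$, and finally identify that limit with $B(H)$.

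For the Cauchy property, condition (ii) ensures that every $H_i$ shares the slope of $H$ and is linear at infinity outside of $W_{r_0}$, so Lemma \ref{lem sup norm}(2) applies to every pair $H_i, H_j$. Combining condition (iii) with the triangle inequality gives
\[\|H_i - H_j\|_{\sup, W_{r_0}} \le \|H_i - H\|_{\sup, W_{r_0}} + \|H - H_j\|_{\sup, W_{r_0}} \le \frac{1}{2^i} + \frac{1}{2^j},\]
hence $d_{int}(B(H_i), B(H_j)) \le 2^{-i} + 2^{-j}$. Each $B(H_i)$ is a finite-type persistence module with the lower semi-continuity property by Lemma \ref{lem persistence module}, so it lies in $(cid)\cap(qtame)\cap(lower)$, and Lemma \ref{lem complete space} furnishes a unique limit $\widetilde{B}$ in this space.

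To identify $\widetilde{B}$ with $B(H)$, I would show that $B(H)$ is itself a limit of $\{B(H_i)\}$ in the interleaving pseudometric, so that uniqueness of limits forces $\widetilde{B} \cong B(H)$. Since $\{H_i\}$ is cofinal in $\mathcal{D}_H$, Lemma \ref{lem direct limit of persistence modules 2} gives the description $B(H) = \varinjlim_i B(H_i)$. For each $i$, I would construct a forward morphism $\phi_i \colon B(H_i) \to B(H)$ as the canonical map into the direct limit, and a reverse morphism $\psi_i \colon B(H) \to B(H_i)[2^{-i}]$ obtained by taking the direct limit, over $j \ge i$, of the continuation homomorphisms $B(H_j) \to B(H_i + 2^{-i}) = B(H_i)[2^{-i}]$, which exist because $H_j \le H \le H_i + 2^{-i}$ on $W_{r_0}$ by (iii). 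Commutativity of the two triangles defining a $2^{-i}$-interleaving would then follow from the naturality of continuation maps under composition of admissible homotopies, as discussed before Definition \ref{def HF(H)}.

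The main obstacle will be making the last step rigorous: the morphism $\psi_i$ is not directly visible in the direct-system description, since that system has only one-directional continuation arrows pointing toward $H$. Constructing $\psi_i$ requires interpreting continuation maps that go \emph{against} the partial order $\prec$ after a uniform action shift $2^{-i}$, and verifying that these maps are compatible with the transition morphisms $B(H_j) \to B(H_{j'})$ for $j < j'$ so that they descend to a well-defined morphism on the direct limit $B(H)$. Once this compatibility is established, the two interleaving triangles reduce to standard continuation-map identities, and the bound $2^{-i} \to 0$ yields $\widetilde{B} \cong B(H)$, completing the argument.
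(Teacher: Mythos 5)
Your construction of the interleaving is essentially the paper's own proof: the reverse morphism $\Psi_{\infty,i}\colon B(H)\to B(H_i)[\epsilon]$ is obtained exactly as you describe, from the continuation maps $\Phi_{H_j,H_i+\epsilon}$ (for $j\ge i$) via the universal property of the direct limit, and the forward morphism is the $\epsilon$-shift of the canonical map $B(H_i)\to B(H)$. The ``obstacle'' you flag is not one --- since $H_j\prec H_i+\epsilon$ (everywhere on $\widehat{W}$, using (ii) outside $W_{r_0}$ and (iii) inside), these are ordinary continuation maps pointing in the direction of $\prec$, and their compatibility with the transition maps $\Phi_{H_j,H_k}$ is just the composition identity for continuation homomorphisms --- while your preliminary Cauchy-plus-completeness detour is redundant, because exhibiting the $\epsilon$-interleaving of $B(H)$ with $B(H_i)$ already proves the lemma (completeness is only needed later, in Corollary \ref{cor equivalent to a barcode}).
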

\begin{proof}
	We would like to show that for any $\varepsilon >0$, there exists $N_\varepsilon \in \mathbb{N}$ such that $d_{int}(B(H),B(H_i)) < \epsilon$ for all $ i \geq N_\varepsilon$, or equivalently, 
	$B(H)$ and $B(H_i)$ are $\epsilon$-interleaved. 
	In other words, we would like to show the existence of morphisms $\Psi_{\infty,i}, \Phi_{i,\infty}$ for all $i \geq N_\varepsilon$ such that the following diagrams commute
	\begin{equation}
		\label{eqn commuting diagram}
		\begin{tikzcd}
			B(H_i) \ar[r,"\Phi_{i, \infty}"]  \ar[rr, bend right=20, "S_{H_i}^{2\varepsilon}"'] & B(H)[\varepsilon] \ar[r, "\Psi_{\infty, i}\lbrack\varepsilon\rbrack"]  & B(H)[2\varepsilon]
		\end{tikzcd},
		\begin{tikzcd}
			B(H) \ar[r,"\Psi_{\infty, i}"] \ar[rr, bend right=20, "S_H^{2\varepsilon}"'] & B(H_i)[\varepsilon] \ar[r, "\Phi_{i, \infty}\lbrack\varepsilon\rbrack"] & B(H)[2\varepsilon]
		\end{tikzcd}, 
	\end{equation}
	where $S_H^\delta$ and $S_{H_i}^\delta$ are $\delta$-shift morphisms of $B(H)$ and $B(H_i)$ defined in Definition \ref{def shift}.

	For $\varepsilon>0$, because of (iii), there exists a natural number $N_\varepsilon$ such that $\norm{H_j-H_i}_{\infty, W_{r_0}}<\varepsilon$ for all $j > i > N_\varepsilon$. 
	Then, for such $i, j$, $H_j \prec H_i +\varepsilon$ in $\mathcal{D}$. 

	In order to define $\Psi_{\infty, i}$, we observe that there exists a continuation homomorphism 
	\[\Phi_{H_j, H_i+\varepsilon}:B(H_j)\to B(H_i+\varepsilon) = B(H_i)[\varepsilon].\] 
	Moreover, for all $i, j, k$ such that $k \geq j \geq i \geq N_\varepsilon$, there exists a commuting diagram 
		\[\begin{tikzcd}
			B(H_j) \arrow[r, "\Phi_{H_j,H_i+\varepsilon}"] \arrow[d, "\Phi_{H_j,H_k}"'] & B(H_i + \varepsilon) = B(H_i)[\varepsilon] \\
			B(H_k) \arrow[ru, "\Phi_{H_k, H_i+\varepsilon}"']
		\end{tikzcd}.\]
	Then, from the universal property of the direct limit $B(H)$, there exists a morphism $\Psi_{\infty,i}: B(H) \to B(H_i)[\varepsilon]$, which we desired to construct.

	In order to define $\Phi_{i,\infty}$, we note that since $B(H)$ is a direct limit, there is a morphism 
	\[\Phi_{H_i,H}: B(H_i) \to B(H).\]
	Let $\Phi_{i,\infty} := S_H^\varepsilon \circ \Phi_{H_i,H}$. 
	
	Then, the first diagram in \eqref{eqn commuting diagram} commutes, because of the following observation:
	\begin{align*}
		\Psi_{\infty,i}[\varepsilon] \circ \Phi_{i,\infty} =& \Psi_{i,\infty}[\varepsilon] \circ S_H^\varepsilon \circ \Phi_{H_i, H} \left(\because  S_H^\varepsilon \circ \Phi_{H_i,H}\right) \\ 
	=& S_{H_i[\varepsilon]}^\varepsilon \circ \Psi_{\infty,i} \circ \Phi_{H_i,H} \left(\because \text{  Definition \ref{def shift} (3)}\right) \\
	=& S_{H_i[\varepsilon]}^P\varepsilon \circ S_{H_i}^\varepsilon \left(\because \text{  the universal property of the direct limit  } B(H) \right) \\ 
	=& S_{H_i}^{2\varepsilon}.
	\end{align*}

	To prove that the second diagram in \eqref{eqn commuting diagram} commutes, we consider the following diagram:
	\[\begin{tikzcd}
		B(H_i) \ar[rr, "\Phi_{H_i,H_j}"] \ar[dr, "\Phi_{H_i,H}"'] \ar[ddr, bend right=30, "\Phi_{i,\infty}\lbrack\varepsilon\rbrack \circ S_{H_i}^\varepsilon"'] &  & B(H_j) \ar[dl, "\Phi_{H_j,H}"] \ar[ddl, bend left=30, "\Phi_{j,\infty}\lbrack\varepsilon\rbrack \circ S_{H_j}^\varepsilon"] \\
		                                & B(H) \ar[d, dashed, "\exists! F"]& \\
		                                & B(H)[2\varepsilon] &
	\end{tikzcd}\]
	
	We note that from the universal property that $\Psi_{\infty, i}$ satisfies, one can observe that both of $F= S_H^{e\varepsilon}$ and $F= \Phi_{i,\infty}[\varepsilon] \circ \Psi_{\infty, i}$ make the diagram commuting. 
	Because of the uniqueness of $F$, it proves that the second diagram in \eqref{eqn commuting diagram} commutes.
\end{proof}

Corollary \ref{cor equivalent to a barcode} follows Lemmas \ref{lem complete space}, \ref{lem persistence module}, and \ref{lem direct limit = Cauchy limit}.
\begin{cor}
	\label{cor equivalent to a barcode}
	For a Hamiltonian $H$ satisfying the conditions in Section \ref{subsection Hamiltonian Floer homologies of degenerate Hamiltonian functions}, $B(H)$ is equivalent to a barcode. 
\end{cor}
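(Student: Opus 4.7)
The plan is to combine the three lemmas cited right before the corollary. The persistence module $B(H)$ is built via a direct limit, so it is not manifestly countably interval decomposable, but it will be exhibited as the interleaving-distance limit of a Cauchy sequence of persistence modules that are manifestly equivalent to (finite) barcodes. Then the completeness result of Bubenik--Vergili will force the limit to lie in the class of countably interval decomposable persistence modules.

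More concretely, first I would fix a cofinal sequence $\{H_i\}_{i\in \mathbb{N}} \subset \mathcal{D}_H$ satisfying conditions (i)--(iii) that precede Lemma \ref{lem direct limit = Cauchy limit}; such a sequence exists because $H$ is linear at infinity outside of $W_{r_0}$, so one can find non-degenerate perturbations of $H$ on $W_{r_0}$ of arbitrarily small sup norm. For each fixed $i$, Lemma \ref{lem persistence module} says that $B(H_i)$ is a persistence module of finite type with the lower semi-continuity property. In particular, by the Structure Theorem (Theorem \ref{thm Structure Theorem}) each $B(H_i)$ is equivalent to a direct sum of finitely many interval modules, so $B(H_i) \in (cid) \cap (qtame) \cap (lower)$.

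Next I would invoke Lemma \ref{lem direct limit = Cauchy limit} to conclude that $B(H)$ is the limit of the sequence $\{B(H_i)\}$ under the interleaving distance. In particular $\{B(H_i)\}$ is Cauchy in the pseudometric space of persistence modules. Restricting to the subspace $(cid) \cap (qtame) \cap (lower)$, in which each $B(H_i)$ lies, Lemma \ref{lem complete space} (i.e.\ Theorem 4.25 of \cite{Bubenik-Vergili18}) supplies a unique limit point within $(cid) \cap (qtame) \cap (lower)$. Since a Cauchy sequence has at most one limit (up to zero interleaving distance), this unique limit must coincide with $B(H)$ in the quotient by the equivalence relation $V \sim W \iff d_{int}(V,W)=0$. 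Hence $B(H) \in (cid)$, which means $B(H)$ is equivalent to a direct sum of interval modules, that is, to a barcode.

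The main technical obstacle is a bookkeeping one rather than a deep one: one needs to check that the limit supplied by Lemma \ref{lem complete space} is genuinely the same persistence module (up to the appropriate equivalence) as the direct limit $B(H)$ of Definition \ref{def Hamiltonian Floer homology}. This is handled by noting that both carry the lower semi-continuity property (the direct limit inherits it from the commutativity of direct limits, while the Bubenik--Vergili limit is chosen to be the lower semi-continuous representative of its equivalence class), so the canonical representative in each equivalence class agrees; this pins down $B(H)$ as the unique limit and therefore as an element of $(cid)$.
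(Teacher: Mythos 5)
Your proposal is correct and follows essentially the same route as the paper: Lemma \ref{lem direct limit = Cauchy limit} exhibits $B(H)$ as the interleaving-distance limit of the finite-type modules $B(H_i)$ from Lemma \ref{lem persistence module}, and Lemma \ref{lem complete space} then places the limit in $(cid)$, hence equivalent to a barcode. Your extra remark on using the lower semi-continuity property to identify the direct limit with the canonical representative of the Cauchy limit's equivalence class is a reasonable elaboration of a point the paper leaves implicit, but it does not change the argument.
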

\begin{proof}
	Lemma \ref{lem direct limit = Cauchy limit} gives a sequence of persistence modules $B(H_i)$, which converges to $B(H)$. 
	Moreover, for all $i \in \mathbb{N}$, $H_i \in \mathcal{D}$. 
	Thanks to Lemma \ref{lem persistence module}, $B(H_i)$ is of finite type for all $i$. 
	Then, $B(H_i)$ is a cid and q-tame persistence module.
	Thus, Lemma \ref{lem complete space} guarantees that $B(H)$ is a cid persistence module. 
	In other words, $B(H)$ is equivalent to a barcode consisting of countably many intervals. This barcode further satisfies the following: For every interval $I$, there are finitely many bars in $B$ having $I$ as a subset.
\end{proof}

\subsection{Barcode entropy of a convex radial Hamiltonian}
\label{subsection barcode entropy of a Hamiltonian function}
Now, we define another barcode entropy, different from Definition \ref{def SH barcode entropy}.  For that, we consider {\em convex radial} Hamiltonian functions defined below.

\begin{definition}
	\label{def convex radial}
	An autonomous Hamiltonian $H: \widehat{W}\to \R$ is called {\bf convex radial} of slope $T$ if there exists a convex function $h:[1,\infty)\to [0,\infty)$ such that
	\begin{itemize}
		\item $H|_{W} \equiv 0$ and $H(r,p) = h(r)$ for all $(r,p) \in [1,\infty) \times \partial W$,
		\item there are constants $r_0>1, C>0, T>0$ satisfying $h(r) = Tr - C$ for all $r \ge r_0$,
		\item $h^{\prime\prime}(r)>0$ for $r\in [1,\infty)$ such that $h^\prime(r) \in \set{0}\cup \text{Spec}(R_\alpha)$,
		\item and the constant $T$ is not a period of any closed Reeb orbits of $(\partial W, \alpha)$.
	\end{itemize}
\end{definition}

When there is no confusion, we will simply denote the Hamiltonian by $H=h(r)$. For convex radial Hamiltonian $H=h(r)$, the Hamiltonian vector fields $X_H$ is given as $h^\prime(r)R_\alpha$ for $[1,\infty)\times Y$ and zero for $W$. Thus, the Hamiltonian 1-map of $H$ restricted to each hypersurface $\set{r}\times \partial W$ is the time $0$-map to the time $t$-map of the Reeb flows generated on $(Y,\alpha)$ and identity at the interior. For this reason, we consider convex radial Hamiltonians to mimic the Reeb flow. Later in \ref{subsection equivalence of two barcode entropy}, we will prove that the barcode $B(H)$ also mimics the Barcode $B_{\SH}(W, \lambda)$.

By direct computation, a 1-periodic orbit of $X_H$ is either a constant solution on $W$ or a non-constant solution on $(1,r_0]\times Y$ in forms of $\set{r_\gamma}\times\gamma$. Here $\gamma$ denotes some closed Reeb orbit and $r_\gamma$ is a unique real number satisfying $h^\prime(r_\gamma)=T_\gamma$, the period of $\gamma$. The Hamiltonian action of the 1-periodic orbit is zero for a constant solution or $r_\gamma \cdot h^\prime(r_\gamma)-h(r_\gamma)$. Note that the action only depends on the period of the corresponding closed Reeb orbit. Based on this observation, we define the reparametrization function $s_h:[0, T]\to[0, C]$ that converts the period to the Hamiltonian action.

\begin{lem}
\label{def reparameterization function}
    For the convex radial function $H=h(r)$ with slope $T$, define the {\textbf reparametrization function $s_h$} as the function satisfying the following: For $t\in [0,T]$, $s_h(t)= r h^\prime (r) - h(r)$ for $h^\prime(r)=t$. Then, the reparametrization function is well-defined and monotone increasing bijective function from $[0,T]$ to $[0,C]$.
\end{lem}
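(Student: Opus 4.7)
The plan is to analyze the auxiliary function $f(r) := r h'(r) - h(r)$—essentially the Legendre transform of $h$—and read all properties of $s_h$ off $f$. Since $h$ is smooth and convex with $h(1) = 0$ and $h'(1) = 0$ (the latter forced by the smooth gluing with $H \equiv 0$ on $W$), and $h(r) = Tr - C$ for $r \geq r_0$, the derivative $h' : [1, r_0] \to [0, T]$ is continuous, non-decreasing, and surjective. In particular, every $t \in [0, T]$ has at least one preimage under $h'$, so at least one candidate value $f(r)$ exists for each $t$.

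The main obstacle is well-definedness when $h'$ has flat pieces. Suppose $h'(r_1) = h'(r_2) = t$ with $r_1 < r_2$; monotonicity of $h'$ forces $h' \equiv t$ on $[r_1, r_2]$, so $h$ is affine there, $h(r) = tr + c$ for a constant $c$. Then $f(r) = rt - (tr + c) = -c$ is constant on $[r_1, r_2]$, showing $s_h(t)$ does not depend on the choice of preimage. For strict monotonicity, differentiate to obtain $f'(r) = r h''(r) \geq 0$, so $f$ is non-decreasing on $[1, \infty)$. For $0 \leq t_1 < t_2 \leq T$, choose $r_1 < r_2$ with $h'(r_i) = t_i$; since $h'(r_2) > h'(r_1)$, the mean value theorem applied to $h'$ produces $\xi \in (r_1, r_2)$ with $h''(\xi) > 0$. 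Continuity of $h''$ then forces $f' > 0$ on a neighborhood of $\xi$, giving $f(r_1) < f(r_2)$, i.e., $s_h(t_1) < s_h(t_2)$.

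Finally, direct substitution yields the endpoint values $s_h(0) = 1 \cdot 0 - h(1) = 0$ and $s_h(T) = rT - (Tr - C) = C$ for any $r \geq r_0$. Continuity of $s_h$ follows from a compactness argument on $[1, r_0]$: for $t_n \to t$ pick $r_n$ with $h'(r_n) = t_n$; any convergent subsequence $r_{n_k} \to r^*$ satisfies $h'(r^*) = t$ by continuity, so $s_h(t_n) = f(r_n)$ subconverges to $f(r^*) = s_h(t)$. A continuous, strictly increasing function from $[0, T]$ to $[0, C]$ sending endpoints to endpoints is the desired bijection.
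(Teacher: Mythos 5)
Your proof is correct and follows essentially the same route as the paper: both arguments come down to the identity $\frac{d}{dr}\left(rh'(r)-h(r)\right) = rh''(r)$ together with $r\geq 1$. The only difference is organizational — the paper derives well-definedness, strict monotonicity, and continuity simultaneously from the single sandwich inequality $h'(r_2)-h'(r_1) \leq [rh'(r)-h(r)]_{r_1}^{r_2} \leq r_0\left(h'(r_2)-h'(r_1)\right)$ (a quantitative bound it reuses later), whereas you handle the three properties by separate short arguments (flat pieces of $h'$, mean value theorem, and a compactness argument, the last two of which could be replaced by the simpler bound $\int_{r_1}^{r_2} rh''\,dr \geq h'(r_2)-h'(r_1)$).
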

\begin{proof}
For $1 \le r_1 < r_2$ with
\begin{equation*}
    [rh^\prime(r)-h(r)]_{r_1}^{r_2}=\int_{r_1}^{r_2} (rh^\prime(r)-h(r))^\prime\,dr = \int_{r_1}^{r_2} rh^{\prime\prime}(r)\,dr
\end{equation*}
With the convexity of $h$, we conclude
\begin{equation}
\label{eqn reparametrization function difference}
(h^\prime(r_2)-h^\prime(r_1))
\le [rh^\prime(r)-h(r)]_{r_1}^{r_2}
\le r_0 (h^\prime(r_2)-h^\prime(r_1))
\end{equation}
When $h^\prime(r_1)=h^\prime(r_2)$, the equation implies $r_1 h^\prime(r_1)-h(r_1)=r_2h^\prime(r_2)-h(r_2)$. Thus, the reparametrization function $s_h$ is well defined. The function is also a strictly increasing continuous function by the equation. By checking $s_h(0)=1\cdot 0 - 0 = 0$ and $s_h(T)=r_0 T - h(r_0)= C$, we conclude that the reparametrization function $s_h$ is a monotone increasing bijective function from $[0,T]$ to $[0,C]$.
\end{proof}

In order to calculate the barcode $B(H)$ of a convex radial Hamiltonian $H=h(r)$, we will use precise perturbations with controlled behaviors. Denote the set of $\mathcal{P}_\alpha^{<T}$ the set of closed Reeb orbits of Reeb vector field $R_\alpha$ with a period less than $T$. Let $f:\widehat{W}\to\R$ be an extension of some Morse function $f_0:W\to[-1,0]$ which is supported in the small neighborhood $W_{1+{r_h}}$ where $h^\prime(r_h)<T_{\min}$, where $T_{\min}$ is the period of the shortest closed Reeb orbits. We further assume there are no critical points in $r_0 \in (1/4, 1+r_h)$.
Next, let $g$ be a 1-periodic Hamiltonian satisfying:
\begin{itemize}
	\item $g_t=g(t,-):\widehat{W}\to [-1,0]$ supported in 
    \[ V_h=\coprod_{\gamma \in \mathcal{P}^{<T}_{\alpha}} V_{h,\gamma},\]
    where $V_{h,\gamma}$ is the small closed neighborhood of $\set{r_\gamma}\times \im(\gamma)$ with $V_{h,\gamma}\subset (1+r_h, r_0)\times Y$
	\item for sufficiently small $\delta>0$, the set of 1-periodic orbits of $X_{H+\delta g}$ in $(1+r_h, r_0)\times Y$ are precisely
	\[\set{\,\hat{\gamma}, \check{\gamma}~:~\gamma\text{ is a closed Reeb orbit with period less than $T$}}\]
	with $\im(\hat{\gamma})=\im(\check{\gamma})=\set{r_\gamma}\times \im(\gamma)$ for $r_\gamma$ with $h^\prime(r_\gamma)=T_\gamma$, the period of $\gamma$, and
	\[\cA_{H+\delta g}(\hat{\gamma})=\cA_{H}\left(\set{r_\gamma}\times \gamma\right)+\delta=s_h(T_\gamma)+\delta, \quad\cA_{H+\delta g}(\check{\gamma})=\cA_{H}\left(\set{r_\gamma}\times \gamma\right)=s_h(T_\gamma),\]
	\item 1-periodic orbits $\hat{\gamma}$ and $\check{\gamma}$ are non-degenerate,
    \item for any $J\in \mathcal{J}^{\text{SFT}}_{r_0}$, $\mathcal{M}(\check{\gamma},\hat{\gamma};H+\delta g,J)$ consists of two Floer cylinders, and 
    \item both two Floer cylinders in $\mathcal{M}(\check{\gamma},\hat{\gamma};H+\delta g,J)$ stays inside the support $(1+r_h, r_0)\times Y$.
\end{itemize}
Such $g$ can be constructed by using some Morse function defined on each periodic orbit for $X_{H}$, see \cite[Proposition 2.2.]{Cieliebak-Floer-Hofer-Wysocki96} for details.
\begin{definition}
    We call the triplet $(h,f,g)$ is a {\bf convex function $h$ with Morse-perturbation data $(f,g)$}, if it satisfied the aforementioned conditions.
\end{definition}
For the triplet $(f,g,h)$, there exists $\delta_0=\delta_0(f,g,h)$ such that for every $0<\delta<\delta_0$, 1-periodic orbits of $X_{H+\delta(f+g)}>0$ are either constant solutions corresponding critical points of $f|_W$ or non-constant solutions $\hat{\gamma}$, $\check{\gamma}$ from each Reeb orbit $\gamma$. The constant solution has an action between $[0,\delta]$. For the triplet $(f,g,h)$, the Floer cylinder with extremely small energy is also controllable. 

\begin{lem}
    \label{convex radial: no differential}
    Let $\set{J_\delta}_{\delta<\delta_0}$ be a subset of $\mathcal{J}^{\text{SFT}}_{r_0}$ such that
    \begin{itemize}
        \item $J_\delta$ is $H+\delta(g+f)$-regular, i.e. $\partial_\delta:=\partial_{H+\delta(f+g), J_\delta}$ is well-defined and $\partial_\delta \circ \partial_\delta = 0$ and   
        \item $\lim_{\delta\to 0}J_\delta=J_0$ for some $J_0 \in \mathcal{J}^{\text{SFT}}_{r_0}$ in $C^\infty$-topology. 
    \end{itemize}
    Then, there exists a constant $\delta_1=\delta_1(f,g,h,\set{J_\delta})$ such that, for every $\delta<\delta_1$, $\left<\partial_\delta x_\delta, y_\delta\right>=0$ for $x_\delta$, $y_\delta$ corresponding to the Reeb orbits with same period.
\end{lem}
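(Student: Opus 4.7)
The argument is by contradiction via Gromov--Floer compactness in the Morse--Bott limit $\delta \searrow 0$, in the spirit of the perturbation scheme of \cite{Cieliebak-Floer-Hofer-Wysocki96}. Suppose the conclusion fails: there is a sequence $\delta_n \searrow 0$ together with pairs of 1-periodic orbits $x_n, y_n$ of $X_{H + \delta_n(f+g)}$ corresponding to Reeb orbits $\gamma^x_n, \gamma^y_n$ of the same period, and Floer cylinders $u_n \in \mathcal{M}(x_n, y_n; H + \delta_n(f+g), J_{\delta_n})$ whose parity is not accounted for by the two standard cylinders from the construction of $g$. Since there are only finitely many Reeb orbits of period less than the slope $T$, we may pass to a subsequence and assume $\gamma^x_n = \gamma^x$, $\gamma^y_n = \gamma^y$ are fixed Reeb orbits of common period $T_\gamma$.

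By the action formulas $\cA_{H + \delta_n(f+g)}(\hat{\gamma}) = s_h(T_\gamma) + \delta_n$ and $\cA_{H + \delta_n(f+g)}(\check{\gamma}) = s_h(T_\gamma)$ (valid because $f$ is supported away from the non-constant 1-periodic orbits), one obtains the uniform energy bound $E(u_n) \leq \delta_n \to 0$. The $C^\infty$-convergence of the Floer data $(H + \delta_n(f+g), J_{\delta_n}) \to (H, J_0)$, together with the integrated maximum principle for SFT-like almost complex structures, confines the images of the $u_n$ to a compact subset of $\widehat{W}$ independent of $n$. Gromov--Floer compactness in its Morse--Bott form then yields, after passing to a subsequence, a zero-energy cascade limit for $(H, J_0)$ connecting asymptotic endpoints $x_\infty \in \{r_{T_\gamma}\} \times \gamma^x$ and $y_\infty \in \{r_{T_\gamma}\} \times \gamma^y$.

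A zero-energy Floer cylinder for $(H, J_0)$ is constant in the $s$-coordinate, so the cascade limit degenerates into a chain of gradient flow lines on the Morse--Bott manifolds of 1-periodic orbits of $H$. Two subcases arise. If $\gamma^x \neq \gamma^y$, the manifolds $\{r_{T_\gamma}\} \times \gamma^x$ and $\{r_{T_\gamma}\} \times \gamma^y$ are disjoint in $\widehat{W}$, and no zero-energy cascade can bridge them; contradiction. If $\gamma^x = \gamma^y = \gamma$, then $x_n, y_n \in \{\hat{\gamma}, \check{\gamma}\}$ and the limit cascade is a Morse flow line on the circle $\{r_{T_\gamma}\} \times \gamma$ for $g|_{\{r_{T_\gamma}\} \times \gamma}$; this function has exactly two critical points ($\hat{\gamma}$ and $\check{\gamma}$) and exactly two flow lines between them. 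By the Morse--Bott correspondence, these two flow lines account for precisely the two Floer cylinders supplied by the construction of $g$ described in the setup, so no extra $u_n$ can exist; contradiction.

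\textbf{Main obstacle.} The principal technical ingredient is the Morse--Bott Gromov--Floer correspondence in the limit $\delta \to 0$: small-energy Floer cylinders for $(H + \delta(f+g), J_\delta)$ are in one-to-one correspondence with Morse flow lines of $g$ on the Morse--Bott circles $\{r_{T_\gamma}\} \times \gamma$. This is a standard but delicate implicit-function-theorem and gluing argument near the Morse--Bott locus. Once this correspondence is in hand, the parity count of Floer cylinders between same-period orbits reduces to a count of flow lines on $S^1$, which is either $0$ (when $\gamma^x \neq \gamma^y$) or $2$ (when $\gamma^x = \gamma^y$); in both situations the count vanishes mod $2$. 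The $C^0$-confinement of $u_n$ via the SFT-like maximum principle is routine for Liouville domains.
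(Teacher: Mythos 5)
Your proposal is correct and follows the same overall strategy as the paper: argue by contradiction, observe that the energy of the offending cylinders is at most $\delta_n \to 0$, and pass to a zero-energy limit via Gromov--Floer compactness. The one structural difference is in how the two cases are dispatched. For $\gamma^x=\gamma^y$ you invoke a Morse--Bott gluing correspondence between small-energy Floer cylinders and gradient flow lines on the circle $\{r_{T_\gamma}\}\times\gamma$, and you flag this correspondence as the main technical obstacle. The paper avoids it entirely: the defining properties of the perturbation datum $g$ (following Cieliebak--Floer--Hofer--Wysocki) already stipulate that $\mathcal{M}(\check{\gamma},\hat{\gamma};H+\delta g,J)$ consists of exactly two cylinders, both confined to the support of $g$, for every SFT-like $J$; hence the count between $\hat{\gamma}$ and $\check{\gamma}$ is $2\equiv 0 \pmod 2$ by hypothesis, and compactness is only needed when $\gamma^x\neq\gamma^y$. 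In that case, instead of cascade compactness, the paper uses a more elementary observation: since the two orbits lie in disjoint neighborhoods $V_{h,\gamma^x}$ and $V_{h,\gamma^y}$, one may translate so that $u_n(0,\cdot)$ meets the region between them; the $C^\infty_{\mathrm{loc}}$-limit is then $s$-independent (zero energy), hence a $1$-periodic orbit of $X_H$, yet its image meets a region containing no $1$-periodic orbits. Your cascade formulation of this step is equivalent but imports more machinery than necessary; conversely, if you insist on deriving the same-orbit case from the Morse--Bott correspondence rather than reading it off the hypotheses on $g$, you would need to actually supply that gluing argument --- this is the only place where your write-up leaves a genuine debt.
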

\begin{proof}
    For the proof, we follow \cite[Chapter 3.]{Cieliebak-Floer-Hofer-Wysocki96}. For $x_\delta, y_\delta \in \set{\hat{\gamma}, \check{\gamma}}$, there are precisely two Floer cylinders in $\mathcal{M}(\check{\gamma},\hat{\gamma};H+\delta(g+f), J_\delta)$ and other moduli spaces are the empty set. Therefore, $\left<\partial_\delta x_\delta, y_\delta\right>=0$ always holds for $\delta<\delta_0$.

    Assume, there exists a sequence positive real number $\set{\delta_i}_{i\in N}$ converging to zero with $\left<\partial_{\delta_i} x_{\delta_i}, y_{\delta_i}\right>=1$ for some $x_{\delta_i}$, $y_{\delta_i}$ each corresponding to the Reeb orbit $\gamma^x_{\delta_i}, \gamma^y_{\delta_i}$ with same period. From the above discussion, $\gamma^x_{\delta_i}\ne \gamma^y_{\delta_i}$. Since the number of Reeb orbit with period less than $T$ are finitely, taking a subsequence, we may assume $\gamma^x_{\delta_i}, \gamma^y_{\delta_i}$ does not depend on $\delta_i$.
    
    From the definition of $\partial_{\delta}$, there exists a Floer cylinder $u_i$ in $\mathcal{M}(x_{\delta_i},y_{\delta_i};H+\delta_i(g+f), J_{\delta_i})$. The action of such a cylinder is precisely $\delta_i$. Since $\gamma^x$ and $\gamma^y$ are distinct, after translation, we assume $u_i(0,)$ have intersection with $[1+r_h,r_1]\times Y \setminus V$, the support of $g$. By Gromov-Floer compactness, taking a subsequence, $u$ converges in $C^{\infty}_{\text{loc}}$ to some $u_0$. Since $u_i(0,)$ converges $u(0,)$, the loop $u(0,)$ also has intersection with $[1+r_h,r_1]\times Y \setminus V$.

    Since ${E(u_i)}=\delta_i\to 0$, the energy of $u$ is zero. By definition, $|\partial_s u|=0$ for all points and $u$ is independent of $s$ and $u(s,-):S^1\to \widehat{W}$ must be the 1-periodic solution of $H$. However, since the loop $u(0,)$ intersect $[1+r_h,r_1]\times Y \setminus V$, $u(0,-)$ is not a 1-periodic orbit which gives a contradiction. Therefore, there exists a constant $\delta_1=\delta_1(f,g,h,\set{J_\delta})$ such that, for every $\delta<\delta_1$, $\left<\partial_\delta x_\delta, y_\delta\right>=0$ for $x_\delta$, $y_\delta$ corresponding to the Reeb orbits with same period.
\end{proof}

With the help of Lemma \ref{convex radial: no differential}, we can characterize the Barcode of a convex radial Hamiltonian. Before 

\begin{lem}
	\label{lem convex radial}
	Let $(W, \lambda)$ be a Liouville domain and let $H=h(r)$ be a convex radial Hamiltonian of slope $T$. Then, the persistence module $B(H)$ is a lower semi-continuous persistence module of finite type with
    \vspace{-3mm}
    \begin{itemize}
        \item $\dim H^*(W)$ many bars having left-endpoint as zero and
        \item $2\times$(number of closed Reeb orbit with period $t$) many bars having endpoints of $s_h(t)$ for $t\in {\Spec}^{<T} (R_\alpha)$.
    \end{itemize}
\end{lem}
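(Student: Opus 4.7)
The plan is to approximate $B(H)$ by the barcodes of the non-degenerate perturbations $H^\delta := H + \delta(f+g)$ and pass to the limit $\delta \searrow 0$, using the Morse-perturbation data $(f, g)$ introduced above. Since the family $\{H^\delta\}_{\delta > 0}$ is cofinal in $(\mathcal{D}_H, \prec)$, Lemma \ref{lem direct limit = Cauchy limit} (together with the interleaving estimate of Lemma \ref{lem sup norm}) guarantees $B(H^\delta) \to B(H)$ in the interleaving distance. By the Isometry Theorem (Theorem \ref{thm isometry theorm}), it thus suffices to describe the barcode of $B(H^\delta)$ for small $\delta$ and identify its bottleneck limit.

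For $\delta < \delta_1(f, g, h, \{J_\delta\})$, the generators of $\CF(H^\delta)$ fall into two action-separated groups, made possible by the disjoint supports of $f$ and $g$: (i) critical points of $f|_W$, with actions in $[0, \delta]$, and (ii) the pairs $(\check{\gamma}, \hat{\gamma})$ for each $\gamma \in \mathcal{P}^{<T}_\alpha$, with actions $s_h(T_\gamma)$ and $s_h(T_\gamma) + \delta$, respectively. The subcomplex generated by the critical points of $f|_W$ is closed under $\partial_\delta$ because Floer differentials decrease action, and a standard $C^2$-small argument identifies it with the Morse complex of $f|_W$ whose homology is $H^*(W)$. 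Its contribution to the barcode is therefore $\dim H^*(W)$ bars with left-endpoints in $[0, \delta]$ together with finitely many short bars of length at most $\delta$ from cancelling Morse pairs. For the Reeb-orbit generators, Lemma \ref{convex radial: no differential} gives $\langle \partial_\delta \hat{\gamma}, \check{\gamma}' \rangle = 0$ whenever $T_{\gamma'} = T_\gamma$, so $(\check{\gamma}, \hat{\gamma})$ cannot collapse into a short bar; each pair therefore contributes two separate bars with left-endpoints $s_h(T_\gamma)$ and $s_h(T_\gamma) + \delta$ (independently of any additional cancellations with lower-action generators, which only affect the right-endpoints).

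Taking $\delta \searrow 0$, all bars of length at most $\delta$ disappear in the bottleneck limit, the $\dim H^*(W)$ birth points in $[0, \delta]$ collapse to $0$, and each pair $\{s_h(T_\gamma), s_h(T_\gamma) + \delta\}$ collapses to the single value $s_h(T_\gamma)$, yielding the claimed barcode. Finite-type-ness follows from $|\mathcal{P}^{<T}_\alpha| < \infty$ (guaranteed by non-degeneracy of $\alpha$ and $T \notin \Spec(R_\alpha)$), while lower semi-continuity is inherited from that of each $B(H^\delta)$ (Lemma \ref{lem persistence module}) via commutativity of direct limits. The main technical step, beyond Lemma \ref{convex radial: no differential}, is the Morse-to-Floer correspondence for the low-action subcomplex: one needs an energy bound, independent of $\delta$, ruling out Floer cylinders between critical points of $f|_W$ that escape a small neighborhood of $W$, so that the bars among these generators come exactly from the Morse differential of $f|_W$ and hence the leftover "essential" generators are in bijection with a basis of $H^*(W)$.
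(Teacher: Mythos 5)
Your proposal follows essentially the same route as the paper's proof: perturb to $H_\delta=H+\delta(f+g)$, use Lemma \ref{convex radial: no differential} to exclude bars of the form $(s_h(t),s_h(t)+\delta]$, identify the low-action part of $\HF(H_\delta)$ with $H^*(W)$, and transfer the counts to $B(H)$ via $\delta$-matchings and the interleaving/direct-limit description (the paper fixes one $\delta$ with $3\delta$ below the minimal spectral gap rather than invoking a bottleneck limit, but the content is the same). One small correction: the generators $\check{\gamma},\hat{\gamma}$ at actions $s_h(T_\gamma)$ and $s_h(T_\gamma)+\delta$ each contribute an \emph{endpoint} of a bar at that action value, not necessarily a left-endpoint --- e.g.\ $\check{\gamma}$ may kill a class born at a lower action level, making $s_h(T_\gamma)$ a right-endpoint --- which is exactly what the lemma asserts, so your final count is unaffected even though the intermediate ``left-endpoints'' claim is not literally correct.
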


\begin{proof} For a convex function $h$ with Morse-perturbation data $(f,g)$, consider a family of Hamiltonians $\set{H_\delta := H+\delta(f+g)}_{\delta<\delta_0)}$ and a family of 1-periodic almost complex structures in $\set{J_\delta}_{\delta<\delta_0}\subset \mathcal{J}^{\text{SFT}}_{r_0}$ such that every $J_\delta$ is $H_\delta$-regular and there exists $J_0$ such that $J_\delta$ converges in $J_0$ in $C^\infty$-topology.

Note that $\{H_\delta\}$ gives a cofinal sequence of $\mathcal{D}_H$. For each $\delta<\delta_1$, where $\delta_1=\delta_1(f,g,h,\set{J_s})$ from Lemma \ref{convex radial: no differential}, the chain complex $\CF(H_\delta)$ consists of
\begin{itemize}
	\item (number of critical points of $f$) many generators having action between $[0,\delta]$,
	\item (number of closed Reeb orbits with period $t$) many generators with action $s_h(t)$ and
	\item (number of closed Reeb orbits with period $t$) many generators with action $s_h(t)+\delta$.
\end{itemize}
	
We first prove that $B(H)$ is a persistence module of finite type. For every $a \in \R$, $\dim \HF^{<a}(H_\delta)$ is uniformly bounded above by (number of critical points of $f$) + 2$\cdot$(number of closed Reeb orbits with period less than $T$). Especially when $a \le 0$, $\HF^{<a}(H_\delta)=0$. By taking a direct limit for each $a$, we conclude $\HF^{<a}(H)$ is a finite-dimensional vector space for every $a\in \R$ and especially $0$ for $a \le 0$.
	
For any $a\le b$ with $[a,b)\cap s_h\left(\set{0}\cup \Spec^{<T}(R_\lambda)\right)=\phi$, $\iota^{a,b}_{H_\delta}$ is an isomorphism for sufficiently small $\delta$ because $\CF^{<a}(H_\delta)=\CF^{<b}(H_\delta)$. Taking a direct limit, $\iota^{a,b}_{H_\delta}$ is also an isomorphism. Therefore, the lower semi-continuity holds for all $a \in \R$ and the continuity holds except for the finite set $s_h\left(\set{0}\cup \Spec^{<T}(R_\lambda)\right)$. Therefore, the persistence module $B(H)$ is the lower semi-continous persistence module with finite type.
	
Thus, $B(H)$ is the barcode of finite type with bars in forms of $(a,b]$ or $(a,\infty)$. Note that the endpoints of bars in $B(H)$ are the elements of the set $s_h\left(\set{0}\cup \Spec^{<T}(R_\lambda)\right).$ Since
\[\norm{H-H_\delta}_{\sup} = \delta \cdot \norm{f+g}_{\sup} \le \delta,\]
there exists $\delta$-matching from $B(H_\delta)$ to $B(H)$. Assuming that $3\times\delta$ is smaller than the minimum difference between the elements of the set $s_h\left(\set{0}\cup \Spec^{<T}(R_\lambda)\right)$, the every bars in $B(H)$ are longer than $3\delta$. Therefore, the $\delta$-matching from $B(H_\delta)$ to $B(H)$ is a surjection.

We now prove there are $\dim (H^*(W))$ many bars having zero as a left-endpoint. Since $f$ is a Morse function on $W$, for $\delta<a<s_h(T_{\min})$, $\HF^{<a}(H_\delta)=H^*(W)$. Since there are no generators having action between $(\delta, s_h(T_{\min}))$, $\iota^{a,b}_{H_\delta}=\textrm{id}$ for $a,b \in (\delta, s_h(T_{\min})]$. Therefore, we have precisely $\dim (H^*(W))$ many bars containing $(\delta, s_h(T_{\min})]$ for $B(H_\delta)$. From the $\delta$-matching, there are precisely $\dim (H^*(W))$ many bars containing $(2\delta, s_h(T_{\min})-\delta]$ for $B(H)$. Since $0$ is the only possible left-endpoint for these bars, and since bars in forms of $(0,a]$ or $(0,\infty)$ must contain $(2\delta, s_h(T_{\min})-\delta]$, there are precisely $\dim (H^*(W))$ many bars having 0 as the left-endpoint.
	
Recall that there are (number of closed Reeb orbits with period $t$) many generators with action $s_h(t)$ and $s_h(t)+\delta$. Therefore, there are precisely (number of closed Reeb orbits with period $t$) many bars having endpoint as $s_h(t)+\delta$ and $s_h(t)$ each. By Lemma \ref{convex radial: no differential}, there are no $(s_h(t), s_h(t)+\delta]$ in $B(H_\delta)$. Therefore, the bar having some endpoint larger than $\delta$ is longer than $2\delta$. Through $\delta$ matching, these bars correspond to some bars in $B(H)$ while the endpoints $s_h(t)$ and $s_h(t)+\delta$ can only corresponds to $s_h(t)$. Conversely, these are the only bars that can match with some bar having $s_h(t)$ as one of its endpoints. Therefore, $B(H)$ has $2\times$(number of closed Reeb orbit with period $t$) many bars having endpoints of $s_h(t)$.
\end{proof}

We define another barcode entropy as follows: 
Let $H$ be a given convex radial Hamiltonian $H$ of slope $T$. 
Since $H$ is a convex radial Hamiltonian, there exist constants $r_0 >1$ and $C$ such that outside of $W_{r_0}$, $H= Tr -C$. 
Now, we choose a sequence $\{a_i\}_{i \in \N}$ satisfying 
\begin{itemize}
	\item $a_i$ satisfies that $\left(i-\tfrac{1}{2^i}\right) < a_i < \left(i+\tfrac{1}{2^i}\right)$, and
	\item $a_iT$ is not a period of any closed Reeb orbit of $\alpha$.
\end{itemize} 
Because of the second condition, $H_i := a_iH$ is a convex radial Hamiltonian function. 

Thanks to Lemma \ref{lem convex radial}, for all $i \in \mathbb{N}$, $B(H_i)$ is a persistence module of finite type. 
Thus, the following definition makes sense:
\begin{definition}
	\label{def barcode entropy of a Hamiltonian function}
	Let $\{H_i\}_{i \in \mathbb{N}}$ be the sequence of Hamiltonian functions which is constructed above.  
	\begin{enumerate}
		\item The {\bf $\boldsymbol{\epsilon}$-barcode entropy of the sequence $\boldsymbol{\{H_i\}}$}, denoted by $\boldsymbol{\hbar_\epsilon(\{H_i\})}$ is defined as 
		\[\hbar_\epsilon(\{H_i\}):= \limsup_{i \to \infty} \frac{1}{a_iT} \log n_\epsilon\left(\tru(B(H_i),a_iC)\right).\]
		\item The {\bf barcode entropy of the sequence $\{H_i\}$}, denoted by $\boldsymbol{\hbar(\{H_i\})}$, is defined as  
		\[\hbar(\{H_i\}):=\lim_{\epsilon \searrow 0} \hbar_\epsilon(\{H_i\}).\]
	\end{enumerate}
\end{definition}
We note that $\tru$ is defined in Definition \ref{def two operations} and $n_\epsilon$ is defined in Definiton \ref{def SH barcode entropy} (1).

\begin{remark}
	\label{rmk independency on the sequence}
	\mbox{}
	\begin{enumerate}
		\item We note that by Definition \ref{def barcode entropy of a Hamiltonian function},$\hbar(\{H_i\})$ seems dependent on the choice of $\{H_i\}$. 
		However, we will show that $\hbar(\{H_i\})$ is independent of $\{H_i\}$ because $\hbar(\{H_i\}) = \hbar^{SH}(W,\lambda)$, in the next section.
		\item We also note that, for any barcode $B$, if a bar $b$ in $\tru(B, T)$ has length $>\epsilon$, then the left end point of $b$ should be smaller than or equal to $T-\epsilon$. 
	\end{enumerate}
\end{remark}

\section{Comparison of barcode entropy}
\label{section comparison of barcode entropy}
We defined two different notions of barcode entropy in Section \ref{section two barcode entropy}.
In Section \ref{subsection equivalence of two barcode entropy},
we compare these two different barcode entropy and we show that they are the same.
The equivalence will be proven in Section \ref{subsection equivalence of two barcode entropy}, and we can call it {\em the} barcode entropy of symplectic homology. 
Moreover, we also prove that the barcode entropy of symplectic homology is an invariant of boundary contact manifold in Section \ref{subsection fillings}.
In the last subsection, we will compare our barcode entropy of symplectic homology with other barcode entropy of a Liouville domain, which are defined by applying a similar idea to the positive symplectic homology and the Rabinowitz Floer homology.

\subsection{Equivalence of two barcode entropy}
\label{subsection equivalence of two barcode entropy}
In this subsection, we prove that two barcode entropy defined in Definitions \ref{def SH barcode entropy} and \ref{def barcode entropy of a Hamiltonian function} coincide each other. 

In other words, we prove the following:
\begin{thm}
	\label{thm equivalence}
	In the setting of Section \ref{subsection setting and notations}, for any sequence of convex radial Hamiltonian functions $\{H_i\}_{i \in \N}$ defining $\hbar(\{H_i\})$, 
	\[\hbar(\{H_i\}) = \hbar^{SH}(W, \lambda).\]
\end{thm}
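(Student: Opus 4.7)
The key observation is that, by Lemma \ref{lem convex radial}, each barcode $B(H_i)$ is completely determined by the Reeb spectrum of $(Y,\alpha)$ together with the reparametrization function $s_{h_i}:[0,a_iT]\to[0,a_iC]$: each Reeb orbit of period $t<a_iT$ contributes a pair of bar-endpoints at $s_{h_i}(t)$. A direct calculation from Lemma \ref{def reparameterization function} shows that $s_{h_i}'(t)=r(t)\in[1,r_0]$ uniformly in $i$, and that $s_{h_i}(t)\searrow t$ as $a_i\to\infty$ for each fixed $t$ (this uses $s_h'(0)=1$ and the monotonicity of $a\mapsto a\,s_h(t/a)$). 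Consequently, one expects $B_{\SH}(W,\lambda)$ to be a ``Reeb-period barcode'' whose endpoints lie in $\{0\}\cup\Spec(R_\alpha)$, in bijective correspondence with the non-Morse bars of $B(H_i)$ under the reparametrization $s_{h_i}$.

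The plan is to first make this bijection precise. Using the direct-limit description $B_{\SH}(W,\lambda)=\varinjlim_i B(H_i)$ together with the explicit characterization of each $B(H_i)$, one shows that for every Reeb orbit $\gamma$ of period $t$, the pair of bars corresponding to $\gamma$ in $B(H_i)$ (with endpoints at $s_{h_i}(t)$) persists through the continuation morphisms and, in the direct limit, yields a pair of bars in $B_{\SH}(W,\lambda)$ with endpoints at $\inf_i s_{h_i}(t)=t$. Hence the non-Morse bars of $\tru\bigl(B_{\SH}(W,\lambda),a_iT\bigr)$ are in bijection with those of $\tru(B(H_i),a_iC)$, and the bijection is realized by $s_{h_i}$.

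The next step compares the bar counts. Since $s_{h_i}'\in[1,r_0]$, any bar of length $\ell$ in $B(H_i)$ corresponds via $s_{h_i}^{-1}$ to a bar of length between $\ell/r_0$ and $\ell$ in $B_{\SH}(W,\lambda)$. Together with controlled boundary effects at the truncation levels (of size $O(\epsilon)$, independent of $i$), this gives a sandwich inequality
\[n_\epsilon\bigl(\tru(B_{\SH}(W,\lambda),a_iT-O(\epsilon))\bigr)\le n_\epsilon\bigl(\tru(B(H_i),a_iC)\bigr)\le n_{\epsilon/r_0}\bigl(\tru(B_{\SH}(W,\lambda),a_iT)\bigr).\]
Dividing by $a_iT$, taking $\limsup_{i\to\infty}$, and letting $\epsilon\searrow 0$ then yields $\hbar(\{H_i\})=\hbar^{\SH}(W,\lambda)$ via the sandwich $\hbar^{\SH}_\epsilon(W,\lambda)\le\hbar_\epsilon(\{H_i\})\le\hbar^{\SH}_{\epsilon/r_0}(W,\lambda)$.

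The principal obstacle is the rigorous identification of $B_{\SH}(W,\lambda)$ with the Reeb-period barcode via the direct limit. On any fixed action window the filtered Floer group $\HF^{<a}(H_i)$ keeps growing as $i$ increases, since $s_{h_i}(t)$ descends across the threshold $a$ for more and more Reeb orbits; one must verify that the direct limit produces exactly one pair of bars per Reeb orbit (plus the Morse bars at zero), without unexpected cancellations or mergings under the continuation morphisms. This will require an explicit chain-level analysis of continuation maps between perturbed convex radial Hamiltonians, extending the perturbation scheme used in the proof of Lemma \ref{lem convex radial}, and a careful check that the resulting persistence module agrees on each truncated window with the one already computed for $H_i$ in that window.
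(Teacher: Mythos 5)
Your overall strategy is the same as the paper's: identify the bars of $B_{\SH}(W,\lambda)$ with those of $B(H_i)$ via the reparametrization $s_{h_i}$, use the estimate $(\ell_j-\ell_i)\le s_h(\ell_j)-s_h(\ell_i)\le r_0(\ell_j-\ell_i)$ from Lemma \ref{def reparameterization function} to control the length distortion, and conclude with the sandwich $\hbar_{r_0\epsilon}(\{H_i\})\le\hbar^{\SH}_\epsilon(W,\lambda)\le\hbar_\epsilon(\{H_i\})$. However, the step you flag as the ``principal obstacle'' is not a routine verification to be deferred --- it is the actual content of the proof, and the way you propose to attack it (tracking a fixed Reeb orbit's pair of bars ``through the continuation morphisms'' to the direct limit) does not by itself suffice. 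The difficulty is that $\hbar(\{H_i\})$ counts bars of $\tru(B(H_i),a_iC)$, i.e.\ bars of the $i$-th Hamiltonian all the way up to its own slope, whereas the direct limit defining $\SH^{<a}$ for $a$ near $a_iT$ only stabilizes at stages $j\gg i$. So you must compare $B(H_i)$ with $B(H_j)$ for $j\gg i$ below a fixed action level, and this is \emph{not} controlled by the interleaving estimate of Lemma \ref{lem sup norm}, because $\|H_i-H_j\|_{\sup}$ is unbounded (the slopes differ). A naive continuation argument between Hamiltonians of different slopes can a priori create or destroy finite bars in the range you care about.

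The paper fills this gap in three steps that your proposal has no substitute for. First (Step 1), for a fixed sufficiently large $K$ the filtered groups satisfy $\SH^{<t}(W,\lambda)\cong\HF^{<s_K(t)}(H_K)$ on each window between consecutive spectral values, giving $\act\bigl(\tru(B(H_K),s_K(T)),(s_K)^{-1}\bigr)=\tru\bigl(B_{\SH}(W,\lambda),T\bigr)$; this is the precise form of your ``stabilization''. Second (Step 2), to compare $B(H)$ with $B(H_K)$ one interpolates through a family of convex radial Hamiltonians $H_t$ agreeing with $H$ on $W_{r_0+1/2}$ and with $H_K$ (up to a constant) at infinity; since every bar of every $B(H_t)$ has a uniform positive minimal length (Lemma \ref{lem convex radial}), a chain of bottleneck-distance estimates over a fine subdivision of $[0,1]$ shows the reparametrized barcodes agree --- the uniform lower bound on bar lengths is what rules out the ``unexpected cancellations or mergings'' you worry about. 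Third (Step 3), identifying $\tru(B(H),s_h(T))$ with $\tru(B(\bar H),s_{\bar h}(T))$ requires a confinement argument: a maximum-principle plus Gromov--Floer compactness argument showing that, for small enough Morse perturbation, all Floer cylinders between orbits of action $<s_h(T)$ stay in $\{r\le r_0\}$, where the two Floer data coincide. Without some version of these three ingredients --- in particular the last one, which is a genuinely analytic input --- the bijection between $\tru(B_{\SH}(W,\lambda),a_iT)$ and $\tru(B(H_i),a_iC)$ that your sandwich inequality rests on is not established.
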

\begin{proof}
	Let $H=h(r)$ be a convex radial function with the linear part $Tr - C$. 
	Our strategy for proving Theorem \ref{thm equivalence} is to show that 
	\begin{gather}
		\label{eqn goal 0}
		n_{r_0\epsilon} \left(\tru(B(H),C)\right) \leq n_{\epsilon} \left(\tru(B_{SH}(W,\lambda),T)\right) \leq n_{\epsilon} \left(\tru(B(H),C)\right).
	\end{gather}
	
	If inequalities in \eqref{eqn goal 0} hold for any $H$, then one can apply the inequalities for each member of the sequence $\{H_i:=a_i H_1\}$ (see Section \ref{subsection barcode entropy of a Hamiltonian function} for the construction and notation of $\{H_i\}$.)
	Then, one has 
	\[n_{r_0\epsilon} \left(\tru(B(H_i),a_iC)\right) \leq n_{\epsilon} \left(\tru(B_{SH}(W,\lambda),T)\right) \leq n_{\epsilon} \left(\tru(B(H),a_i C)\right).\]
	By measuring the exponential growths of the above terms as $i$ increases, one has 
	\[\hbar_{r_0\epsilon}(\{H_i\}) \leq \hbar^{SH}_\epsilon(W, \lambda) \leq \hbar_\epsilon(\{H_i\}).\] 
	And, when one takes the limit as $\epsilon \searrow 0$, one can prove Theorem \ref{thm equivalence}.
	
	We note that \eqref{eqn goal 0} compares the number of bars in (truncations of) barcodes $B(H)$ and $B_{SH}(W,\lambda)$. 
	Since it is not easy to compare $B(H)$ and $B_{SH}(W,\lambda)$ directly, in our proof of \eqref{eqn goal 0}, we use four-steps argument. 
	In the firs step, we fix a sufficiently large $K$, then we compare $B_{SH}(W,\lambda)$ and $B(KH)$, instead of $B_{SH}(W,\lambda)$ and $B(H)$.
	In the second step, we choose another convex radial function $G$ acting like $H$ on $W_{r_0}$ and acting like $KH$ outside of $W_{r_0+1}$.
	Then, we compare $B(KH)$ and $B(G)$.
	The third step compares $B(G)$ and $B(H)$, and the last step proves \eqref{eqn goal 0} from the first three steps. 
	\vskip0.2in
	
	\noindent{\em Step 1.} For the first step, we recall that since $\alpha$ is a non-degenerate contact $1$-form, there exist finitely many closed Reeb orbits whose period $<T$. 
	Let the Reeb orbits of periods $<T$ have $\ell_1< \cdots < \ell_m$ as their periods, and let $\ell_{m+1}$ be the smallest period $>T$ of a closed Reeb orbit.

	

From Lemma \ref{def reparameterization function}, each convex radial Hamiltonian $H=h(r)$ has an assigned monotone increasing bijective function $s_h:[0,T]\to [0,C]$. For each $i\in \N$, we extend the reparametrization function of $a_iH$, $s_{a_i h}:[0,a_i T]\to [0,a_i C]$, to a strictly increasing bijective function $s_{i}:\R \to \R$ by defining $s_{i}(x)=x$ for all $x<0$ and $s_{i}(x)=x+a_i(C-T)$. By convexity of $h$, we have the followings:
\begin{itemize}
		\item $s_{i}(t)$ decreases as $i$ increases, and 
		\item $\lim_{i \to \infty}s_{i}(t) = t$.
	\end{itemize}
 
Thus, for any sufficiently large $k$, we have 
\[0 < \ell_1 < s_{k}(\ell_1) < \ell_2 < \dots < \ell_m < s_{k}(\ell_m) < T.\]
Moreover, for any $k$ satisfying the above, one can observe the following:
\begin{itemize}
	\item If $\ell_i < a \leq b \leq \ell_{i+1}$ (resp.\ $s_{k}(\ell_i) < a \leq b \leq s_{k}(\ell_{i+1})$,) $\iota^{a,b}$ (resp.\ $\iota_{H_k}^{a,b}$) is an isomorphism. 
In other words, $\tru(B_{SH}(W,\lambda),t)$ (resp.\ $\tru(B(H_k),s_{k}(t))$) satisfies the {\em lower semi-continuity} in the sense of \cite[Definition 1.1]{Polterovich-Rosen-Samvelyan-Zhang}. 
		\item For any $a >0$, $\SH^{<a}(W, \lambda) = \lim_{k \to \infty} \HF^{<a}(H_k)$ where the limit means the direct limit. 
		Moreover, thanks to Lemma \ref{lem convex radial}, $\SH^{<t}(W,\lambda) = \HF^{<t}(H_k)$ if $s_{k}(\ell_i) < t< \ell_{i+1}$ for some $i$.  
		\item For any $t_i$ such that $t_0 <0 < t_1 < \ell_1 < t_2< \ell_2 < \dots < t_{m+1} < \ell_{m+1}$, the following diagram commutes:
		\begin{equation}
			\label{equivalence: SH to HF}
			\begin{tikzcd}&\HF^{<s_{k}(t_0)}(H_k) \arrow{r}{\iota_{H_k}^{t_0,t_1}} \arrow{d}{\cong}& \HF^{<s_{k}(t_1)}(H_k) \arrow{d}{\cong}\arrow{r} &\cdots\arrow{r}\arrow{d} &\HF^{<s_{k}(t_m)}(H_k)\arrow{d}{\cong} \ar[r, "\cong"] \ar[d] & \HF^{<s_{k}(T)}(H_k) \arrow{d}{\cong}\\ &\SH^{<t_0}(W,\lambda) \arrow{r}{\iota_{\SH}^{t_0,t_1}} & \SH^{<t_1}(W,\lambda) \arrow{r} &\cdots \arrow{r} &\SH^{<t_m}(W,\lambda) \ar[r, "\cong"] & \SH^{<T}(W, \lambda)\end{tikzcd}.
		\end{equation} 
	\end{itemize}
	
	Now, we fix a sufficiently large $K$. 
	The above observations and Lemma \ref{lem convex radial} explain how $\tru(B(H_K),s_{K}(t))$ and $\tru(B_{SH}(W,\lambda),t)$ change as $t <T$ increases. 
	Moreover, one has 
	\begin{enumerate}
		\item[(A)] $\act\left(\tru\big(B(H_K),s_{K}(T)\big),(s_{K})^{-1}\right) = \tru\left(B_{SH}(W,\lambda),T\right)$.
	\end{enumerate} 
	\vskip0.2in
	
	\noindent{Step 2.} Now, we would like to compare $B(H)$ and $B(H_K)$ for the fixed $K$ in the first step. 
	For the comparison, we choose another convex radial Hamiltonian function $\bar{H}=\bar{h}(r)$ such that 
	\begin{itemize}
		\item $\bar{H} \equiv H$ on $W_{r_0+1/2}$,
		\item $\bar{H}(r,p)= H_K(r,p) - C$ on $r \ge r_0+1$, for some constant $C>0$ and 
		\item $\bar{H} \leq H_K$ for any point on $\widehat{W}$.
	\end{itemize}
	
	Let $H_t := (1-t) \bar{H} + H_k$ for $t \in [0,1]$. 
	Then, $H_t \equiv (1-t)\bar{h} + t a_K h$ on $\partial W \times [1,\infty)_r$ and one can observe the following:
	\begin{itemize}
		\item[(i)] From Lemma \ref{lem convex radial}, one can find a small positive number $\delta_0$ such that every bar in $B(H_t)$ has length $> 2 \delta_0$. 
		We note that $\delta_0$ depends on the slope of $H_t$, i.e., $a_K T$ for now. 	
		\item[(ii)] There exists $\delta>0$ such that if $|t_1 - t_2| <\delta$, then 
		\[\|H_{t_1} - H_{t_2}\|_{\infty,W_{r_0+1}} < \delta_0.\]
		\item[(iii)] For any $t_1, t_2$, 
		\[d_{bot}\left(B(H_{t_1}), B(H_{t_2})\right) = d_{int}\left(B(H_{t_1}), B(H_{t_2})\right) \leq \|H_{t_1} - H_{t_2}\|_{\infty,W_{r_0+1}}.\]
		The first equality holds because of the Isometry Theorem (Theorem \ref{thm isometry theorm}), and the inequality holds because of Lemma \ref{lem sup norm}. 
		We note that Lemma \ref{lem sup norm} is proven for non-degenerate Hamiltonian functions.
		However, it is not hard to prove Lemma \ref{lem sup norm} for the convex radial Hamiltonian functions.
		\item[(iv)] If $d_{bot}\left(B(H_{t_1}), B(H_{t_2})\right) <\delta_0$, then there exists a $\delta_0$-matching from $B(H_{t_1})$ to $B(H_{t_2})$. 
		In other words, there is a matching from $B(H_{t_1})_{2\delta_0}$ to $B(H_{t_2})_{2\delta_0}$. 
		We note that by (i), $B(H_{t_i})_{2\delta_0} = B(H_{t_i})$ for $i=1, 2$.
		Moreover, 
		\[\act\left(B(H_{t_1}), (s_{t_1})^{-1}\right) = \act\left(B(H_{t_2}), (s_{t_2})^{-1}\right),\]
		where $s_{t_i}$ is the extension of reparametrization function for $(1-t_1)\bar{h} + t_1 Kh$ and $(1-t_2)\bar{h} + t_2 Kh$.
	\end{itemize}
	
	We choose $N \in \N$ such that $\tfrac{1}{N} < \delta$.
	Then, we compare $H_{\tfrac{i}{N}}$ and $H_{\tfrac{i+1}{N}}$ for $i = 0, \dots, N-1$. 
	As a result, we have 
	\begin{enumerate}
		\item[(B)] $\act\left(B(H), s_g^{-1}\right) = \act\left(B(H_k), s_{k}^{-1}\right).$
	\end{enumerate}
	\vskip0.2in
	
	\noindent{\em Step 3.} We compare $B(H)$ and $B(\bar{H})$. For this purpose, we strongly control the Morse perturbation $(f,g)$ and $(\bar{f}, \bar{g})$ and subset of almost complex structures $\set{J_\delta}, \set{\bar{J}_\delta}$ to satisfies the following:
    \begin{itemize}
        \item $f=\bar{f}$ in $\widehat{W}$, $\bar{g}-g$ is supported in $(r_0+1/2, r_0+1)\times Y$, $J_\delta=\bar{J}_\delta$ in $S^1\times W_{r_0+1/2}$,
        \item $J_\delta$ is $H_\delta=H+\delta(f+g)$-regular, $\bar{J}_\delta$ is $\bar{H}_\delta=\bar{H}+\delta(\bar{f}+\bar{g})$-regular, and
        \item $\set{J_\delta},\set{\bar{J}_\delta}$ have common limit $J\in\mathcal{J}_{r_0}^{\text{SFT}}$ on $C^\infty$-topology.
    \end{itemize}

    By the property of Morse perturbation, for sufficiently small $\delta$, the Floer chain complex $\CF(H_\delta)$ is identical to the sub-complex $\CF^{<s_h(T)}(\bar{H}_\delta)$ consists of the 1-periodic orbits with action less than $s_h(T)=s_{\bar{h}}(T).$ Since $H$ is linear in $r\ge r_0$, the maximal principle implies that the Floer cylinder for the Floer-data $(H_\delta, J_\delta)$ does not intersect the hypersurface $r=r_0$.
    
    Conversely, we claim the Floer cylinder for the Floer data $(\bar{H}_\delta, \bar{J}_\delta)$ stays does not intersect the hypersurface $r=r_0$ for sufficiently small $\delta$. Assume there exists a sequence of real numbers $\set{\delta_i}_{i \in \N}$ converging to zero and a sequence of Floer cylinder $\set{u_i}_{i \in \N}$ where each $u_i$ intersects $r=r_0$. After translation, we assume $u_i(0,-)$ also intersects $r=r_0$. By Gromov-Floer compactness, $u_i$ converges to some Floer cylinder $u$ solving the Floer equation for $(\bar{H}, J)$ in $C^\infty_{\text{loc}}$-topology. Since we assumed $u_i(0,-)$ intersects $r=r_0$ for every $i$, $u(0,-)$ intersects $r=r_0$.

    Since $\bar{H}$ is a radial function, the maximum principle holds for $u$ and there exists a sequence $\set{s_k}$ with $s_k\to \infty$ such that $u(s_k, t_k)$ has $r$-coordinate at least $r_0$. Note that $\mathcal{A}_H(u(s_k,-))=\lim \mathcal{A}_H(u_i(s_k, -))<s_h(T)$. Following \cite[Proposition 6.5.7.]{Audin-Damian14}, $u(s_k, -)$ converges to some 1-periodic orbit of $X_{\bar{H}}$ in $C^\infty$ topology. From the limit, this periodic orbit also has action at most $s_{\bar{h}}(T)$ and therefore contained in $r<r_0$. This contradicts that $u(s_k,-)$ is not contained in $r<r_0$ for all $k\in\N$, and the Floer cylinder stays in $r\le r_0$ for sufficiently small $\delta$.
    
    Since the Floer equations for two Floer-data $(H_\delta, J_\delta), (\bar{H}_\delta, \bar{J}_\delta)$ agree on $r\le r_0$, the differentials of $\CF(H_\delta, J_\delta)$ and $\CF^{<s_h(T)}(\bar{H}_\delta, \bar{J}_\delta)$ agree for sufficiently small $\delta$. Taking a limit, we conclude
	\begin{enumerate}
		\item[(C)] $\tru\left(B(H),s_h(T)\right) = \tru\left(B(\bar{H}),s_{\bar{h}}(T)\right)$.
	\end{enumerate}
	We note that $H \equiv \bar{H}$ on $W_{r_0}$. 
	\vskip0.2in
	
	\noindent{\em Step 4.} From (A)--(C), we have the following:
	\begin{align*}
		\tru\left(B_{SH}(W,\lambda),T\right) =& \act\left(\tru\big(B(H_K),s_{K}(T)\big), (s_{K})^{-1}\right) \text{  ($\because$ (A))} \\
		=& \tru\left(\act\big(B(H_K), (s_{K})^{-1}\big), T\right) \text{  (By Definition \ref{def two operations})} \\
		=& \tru\left(\act\big(B(\bar{H}), (s_{\bar{h}})^{-1}\big), T\right)  \text{  ($\because$ (B))} \\
		=& \act\left(\tru \big(B(\bar{H}),s_{\bar{h}}(T)\big), s_{\bar{h}}^{-1}\right) \text{  (By Definition \ref{def two operations})} \\
		=& \act\left(\tru \big(B(H),s_h(T),s_h^{-1}\big)\right) \text{  ($\because$ (C))} \\
		=& \tru\left(\act \big(B(H),s_h^{-1}\big), T\right) \text{  ($\because s_{\bar{h}}= s_h$ on $[0,T]$)} \\
		=& \act\left(\tru \big(B(H), s_h(T) =C\big), s_h^{-1}\right) \text{  (By Definition \ref{def two operations})}.
	\end{align*}
	Thus, there is one-to-one relation between bars of $\tru \big(B(H), C\big)$ and $\tru\left(B_{SH}(W,\lambda),T\right)$.
	Moreover, the lengths of a bar in $\tru \big(B(H), C\big)$ and the corresponding bar in $\tru\left(B_{SH}(W,\lambda),T\right)$ are different by $s_h^{-1}$. 
	
	We note that as we observed in the first step, every bars in $\tru\left(B_{SH}(W,\lambda),T\right)$ has its endpoints at either $0, \infty$ or a period of a closed Reeb orbit.
	Let us assume that there exists a bar $\mathbb{b}$ in $\tru\left(B_{SH}(W,\lambda),T\right)$ whose the right end point is $\ell_j$ and the left end point is $\ell_i$ 
	Then, the bar in $\tru \big(B(H), C\big)$ corresponding to $\mathbb{b}$ has two end points at $s_h(\ell_j)$ and $s_h(\ell_i)$. By inequalities in \eqref{eqn reparametrization function difference}, the length of the bar $(s_h(\ell_i), s_h(\ell_i)]$ satisfies
	\[\ell_j - \ell_i \leq s_h(\ell_j) - s_h(\ell_i) \leq  r_0(\ell_j - \ell_i).\]
	It completes the proof of \eqref{eqn goal 0}.
\end{proof}

From Theorem \ref{thm equivalence}, one can observe that the barcode entropy of a sequence of Hamiltonian functions, $\hbar(\{H_i\})$, is independent of the choice of sequence.
In the rest of the paper, we simply use the term the {\em $\SH$-barcode entropy} to indicate either of $\hbar^{SH}(W, \lambda)$ or $\hbar(\{H_i\})$.

\subsection{$\SH$-Barcode entropy of different fillings}
\label{subsection fillings}
The $\SH$-barcode entropy $\hbar^{SH}(W,\lambda)$ is defined to be an invariant of a Liouville domain $(W,\lambda)$. 
In this subsection, we prove that $\hbar^{SH}(W,\lambda)$ is an invariant of the boundary contact manifold and its non-degenerate contact $1$-form $\alpha:= \lambda|_\partial W$. 
More precisely, we prove Theorem \ref{invariance: main thm}.

\begin{thm}
	\label{invariance: main thm}
    Let $(Y,\alpha)$ be a non-degenerate contact manifold with Liouville fillings  $(W_1, \lambda_1)$ and $(W_2, \lambda_2)$ whose first Chern classes vanish. Then, the $\SH$-barcode entropy of $(Y,\alpha)$ defined by the symplectic homology of $(W_1, \lambda_1)$ and that of $(W_2, \lambda_2)$ coincides, i.e,
    \[\hbar^{\SH}(W_1, \lambda_1)=\hbar^{\SH}(W_2, \lambda_2).\]
    Moreover, there exists a constant $C(Y,\alpha)$ depending on $(Y, \alpha)$ such that, for any $\epsilon \le C(Y)$, the following holds:
	\[\hbar_\epsilon^{\SH}(W_1, \lambda_1)=\hbar_\epsilon^{\SH}(W_2, \lambda_2).\]
\end{thm}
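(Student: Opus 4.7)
The plan, following the introduction, is to produce a constant $C = C(Y, \alpha) > 0$ independent of the filling such that
\[
B_{\SH}(W_1, \lambda_1)_+^{\le C} = B_{\SH}(W_2, \lambda_2)_+^{\le C}
\]
as multisets, and to derive from this the equality of the barcode entropies and their $\epsilon$-refinements. By Theorem \ref{thm equivalence} it suffices to compare the barcodes $B(H_j)$ of a common convex radial Hamiltonian $H = h(r)$ on the two completions $\widehat{W}_j$. Choose $H_1$ and $H_2$ to agree under the canonical identification of the cylindrical ends $Y \times [1, \infty)$, and choose Morse perturbation data $(f_j, g)$ with $g$ supported near the Reeb-orbit lifts (hence common to both fillings) and $f_j$ any Morse function on $W_j$. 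Then Lemma \ref{lem convex radial} canonically identifies the positive-action generators $\hat{\gamma}, \check{\gamma}$ of $\CF(H_j + \delta(f_j + g))$ across $j = 1, 2$.

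The main technical step is a confinement result. Set $C := s_h(T_{\min})$, where $T_{\min}$ is the minimum period of a closed Reeb orbit of $\alpha$. The claim is that for $\delta > 0$ sufficiently small, every Floer cylinder $u : \R \times S^1 \to \widehat{W}_j$ with positive-action asymptotes $x, y$ satisfying $\cA_{H_j + \delta(f_j+g)}(x) - \cA_{H_j + \delta(f_j+g)}(y) < C$ remains entirely in the cylindrical end $Y \times [1, \infty)$. The upper bound on the $r$-coordinate of $u$ is the standard SFT maximum principle for $J \in \mathcal{J}_{r_0}^{\text{SFT}}$. The lower bound is obtained by contradiction in the style of Lemma \ref{convex radial: no differential}: a sequence of such cylinders entering the interior of $W_j$ with $\delta_k \to 0$ would, by Gromov--Floer compactness, converge to a broken Floer trajectory for the unperturbed autonomous $H$; one of whose components must join a positive-action Reeb-type orbit to a constant orbit in $W_j$ of action $0$; such a component has action drop at least $s_h(T_{\min}) = C$, leaving no energy for the remaining components and contradicting the total energy budget $< C$. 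Hence the portion of the differential pairing positive-action generators with action drop $< C$ is determined by data in the cylindrical end alone, and the corresponding short positive bars in $B(H_j)$ agree between $j = 1, 2$. Passing to the limit $\delta \to 0$ and applying the action reparametrization of Theorem \ref{thm equivalence} yields the claimed identity of multisets.

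To deduce the equality of $\epsilon$-barcode entropies for $\epsilon \le C$, partition the bars of $B_{\SH}(W_j, \lambda_j)$ of length $> \epsilon$ with left endpoint in $(-\infty, T]$ into three classes: those with left endpoint $0$ (at most $\dim H^*(W_j)$ in number, independent of $T$); short positive bars of length in $(\epsilon, C]$ (whose counts agree between the two fillings by the main claim); and long positive bars of length $> C$ together with semi-infinite positive bars. Using the generator balance $2 F_j(T) + I_j(T) = 2 N(T)$---where $F_j(T)$ counts finite positive bars with left endpoint $\le T$, $I_j(T)$ counts semi-infinite positive bars with left endpoint $\le T$, and $N(T) = \#\{\gamma : T_\gamma < s_h^{-1}(T)\}$ is intrinsic to $(Y, \alpha)$---together with the agreement of short-bar counts, reduces $|n_\epsilon(\tru(B_{\SH}(W_1,\lambda_1), T)) - n_\epsilon(\tru(B_{\SH}(W_2,\lambda_2), T))|$ to a combination of $\dim H^*(W_j)$ and $|I_1(T) - I_2(T)|$. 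A Viterbo-type continuation argument between filtered positive symplectic homologies controls $|I_1(T) - I_2(T)|$ by a quantity subexponential in $T$. Dividing by $T$ and letting $T \to \infty$ then gives $\hbar_\epsilon^{\SH}(W_1, \lambda_1) = \hbar_\epsilon^{\SH}(W_2, \lambda_2)$ for $\epsilon \le C$, and letting $\epsilon \searrow 0$ gives the equality of the full barcode entropies.

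The main obstacle is the confinement step, specifically the lower bound preventing descent into the interior of $W_j$. Although the SFT maximum principle cleanly bounds $r$ from above, ruling out descent requires careful management of the degeneration of the Floer equation as $\delta \to 0$ (where the perturbed Hamiltonian becomes autonomous and vanishes on $W_j$), and the conversion of the geometric principle ``energy less than $s_h(T_{\min})$ cannot traverse the action gap between a Reeb orbit and a constant orbit in $W_j$'' into a rigorous Gromov--Floer compactness argument. Controlling the semi-infinite-bar discrepancy $|I_1(T) - I_2(T)|$ via a continuation/Viterbo-style comparison is a secondary but nontrivial ingredient needed for the entropy-level conclusion.
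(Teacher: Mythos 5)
Your overall strategy (exhibit a filling-independent multiset of short positive bars, then conclude by counting) is the paper's, but the key technical step is carried out with the wrong mechanism and the wrong constant, and I do not believe it closes. You claim that a Floer cylinder between positive-action generators with action drop less than $s_h(T_{\min})$ must stay in the cylindrical end, because entering the interior of $W_j$ would force the $\delta\to 0$ limit to contain a component asymptotic to a constant orbit, costing energy $s_h(T_{\min})$. But a cylinder can enter $\{r<1\}$ without being asymptotic to a constant orbit there; in fact, since both ends of your cylinder have positive action and the action decreases along a broken trajectory, a constant orbit (action $\le \delta$) can never appear as an intermediate asymptote at all, so the configuration from which you propose to derive a contradiction simply does not arise, and no contradiction is obtained. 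What actually has to be ruled out is a low-energy component between two Reeb-type orbits that dips below the collar $[\tfrac12,1]\times Y$ shared by the two fillings, and the mechanism for this is Gromov monotonicity: inside $W$ the Hamiltonian vanishes, the Floer equation becomes the $J$-holomorphic curve equation, and a holomorphic curve with boundary on both $\{\tfrac12\}\times Y$ and $\{1\}\times Y$ has energy at least a constant $C(J)$ (Lemma \ref{Invariance: Gromov monotonicity}, used in Lemma \ref{Invariance: Crossing Energy}). The resulting threshold $C(J_0)$ is unrelated to the action gap $s_h(T_{\min})$, and the confinement is only to $r\ge\tfrac12$ (which suffices, since that is where the two fillings agree), not to $r\ge 1$.

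Two further gaps: (a) even with confinement, "the corresponding short positive bars agree" does not follow immediately, because the two complexes have different low-action summands generated by $\mathrm{Crit}(f_j)$ and the differential may send high-action generators into these summands differently; one needs the non-Archimedean linear-algebra statement of Lemma \ref{invariance: short bars are preserved General} to extract equality of the short high-action bar multisets. (b) Your entropy deduction routes through a subexponential bound on $|I_1(T)-I_2(T)|$ obtained by an unspecified "Viterbo-type continuation argument"; this is not justified (the semi-infinite-bar counts of two fillings can genuinely differ) and is also unnecessary — the counting identity $n_{j}(\mathrm{I})+2n_j(\mathrm{II})+n_j(\mathrm{III})+2n_j(\mathrm{IV})=\dim H^*(W_j)+2\cdot\#\{\gamma: T_\gamma<T\}$ together with filling-independence of types III and IV sandwiches $n_\epsilon(\tru(B_{\SH}(W_j,\lambda_j),T))$ between $\tfrac12(n_\epsilon(Y,T)+\dim H^*(W_j))$ and $n_\epsilon(Y,T)+\dim H^*(W_j)$, and a multiplicative factor of $2$ plus a bounded additive term do not affect exponential growth rates.
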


%

\begin{remark}
	We note that for a filling $(W,\lambda)$ of $(Y,\alpha)$, one can find a (closed) collar neighborhood of $\partial W$, $N(\partial W)$ such that 
	\[\left(N(\partial W), \lambda|_{N(\partial W)}\right) \simeq \left([\tfrac{1}{2}, 1]_r \times Y, r\alpha\right).\]
	As usual, the subscript $r$ in $[\tfrac{1}{2},1]_r$ means the coordinate of the factor.
	For more details, we refer the reader to \cite[Chapter 11]{Cieliebak-Eliashberg12}.
	In the rest of this subsection, we will identify the collar neighborhood of $\partial W$ with $\left([\tfrac{1}{2}, 1]_r \times Y, r\alpha\right)$.
\end{remark}

Theorem \ref{invariance: main thm} is a corollary of Proposition \ref{invariance: short bars SH vers} that we state and prove below. 
In order to state Proposition \ref{invariance: short bars SH vers}, let us define the following notation first. 
\begin{definition}
	Let $C$ be a positive real number. 
	For a barcode $B$, let $\boldsymbol{B^{<C}_+}$ denote the set of bars in $B$ satisfying that
	\begin{itemize}
		\item the length of the bar is at most than $C$, and 
		\item the left-endpoint of the bar is bigger than $0$. 
	\end{itemize} 
\end{definition}

\begin{prop}
	\label{invariance: short bars SH vers}
	In the setting of Theorem \ref{invariance: main thm}, one can find a constant $C(Y,\alpha)>0$ such that
	\[B_{\SH}(W_1, \lambda_1)^{<C(Y,\alpha)}_+=B_{\SH}(W_2, \lambda_2)^{<C(Y,\alpha)}_+.\]
\end{prop}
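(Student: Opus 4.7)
The plan is to compute the barcodes $B_{\SH}(W_i,\lambda_i)$ via the convex radial Hamiltonian framework of Lemma \ref{lem convex radial}, and to show that bars with positive left-endpoint and bounded length depend only on Floer moduli that lie in the common cylindrical end. Identifying collar neighborhoods of $\partial W_i$, the region $\widehat{\mathcal{E}}=[\tfrac{1}{2},\infty)_r\times Y\subset \widehat{W}_i$ equipped with the Liouville form $r\alpha$ is common to both fillings. I fix a single convex radial Hamiltonian $H=h(r)$ of slope $T$ and Morse-perturbation data $(f_i,g)$: the Morse function $f_i$ is chosen on the differing interior $W_i$, while the localized perturbation $g$ near the non-constant orbits is supported inside $\widehat{\mathcal{E}}$ and is hence common to both settings. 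Consequently, the set of non-constant 1-periodic orbits $\{\hat\gamma,\check\gamma\mid \gamma\in\mathcal{P}_\alpha^{<T}\}$ of $H_{i,\delta}=H+\delta(f_i+g)$ is the same for $i=1,2$.

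The heart of the argument is a confinement claim: there exists a constant $C_0=C_0(Y,\alpha)>0$, depending only on the contact boundary, such that, for a generic SFT-like almost complex structure $J$ agreeing on $\widehat{\mathcal{E}}$ in the two settings, every Floer cylinder $u:\mathbb{R}\times S^1\to \widehat{W}_i$ asymptotic to non-constant orbits of $H_{i,\delta}$ and of energy less than $C_0$ has image contained in $\widehat{\mathcal{E}}$. This is established by SFT-neck-stretching together with exactness: a cylinder dipping into $W_i$ would, in the stretched limit, yield a non-trivial $J$-holomorphic curve in $\widehat{W}_i$ with a positive puncture asymptotic to some Reeb orbit $\gamma$, of energy at least $\int_\gamma\alpha=T_\gamma\ge T_{\min}$, where $T_{\min}$ denotes the minimum period of a closed Reeb orbit of $\alpha$. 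Taking $C_0<T_{\min}$ rules this out, and therefore the matrix entries of $\partial_{H_{i,\delta},J}$ between non-constant generators with action gap at most $C_0$ are filling-independent.

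To extract the proposition from this confinement, observe that since constants have action in $[0,\delta]$ while non-constants have action at least $s_h(T_{\min})$, any bar of length at most $C_0/2$ with left-endpoint $a>\delta$ (in particular $a>0$ once $\delta\to 0$) must arise from a pair of non-constant generators, both at actions in $[a,a+C_0/2]$, for $\delta$ sufficiently small. By Lemma \ref{lem svd} applied to the non-constant block of the differential, such bars are determined by data that agrees for the two fillings, so the multisets $B(H_{1,\delta})_+^{\le C_0/2}$ and $B(H_{2,\delta})_+^{\le C_0/2}$ coincide. Passing to $\delta\to 0$ via Lemma \ref{lem direct limit = Cauchy limit} and then to the direct limit as $T\to\infty$ transports the equality to $B_{\SH}(W_i,\lambda_i)$, yielding the proposition with $C(Y,\alpha):=C_0/2$.

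The main obstacle is the rigorous verification of the confinement claim: the SFT-neck-stretching must be carried out in the presence of a Hamiltonian perturbation that vanishes on the filling, and uniform energy bounds must be maintained across the cofinal sequence of Hamiltonian functions used to define $B_{\SH}$. A secondary technicality is to verify that the singular value decomposition applied to the non-constant block produces precisely the short-positive-endpoint bars of the full barcode, which requires carefully separating the non-constant quotient complex from the constant sub-complex at the persistence-module level and using stability of this decomposition under the small interleavings coming from $\delta$ and from the passage to the direct limit.
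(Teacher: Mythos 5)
Your overall architecture matches the paper's: identify a common cylindrical end $[\tfrac12,\infty)\times Y$, show that the low-energy part of the Floer differential between non-constant generators is computed entirely inside that end, and then transport the resulting equality of short positive bars from the Hamiltonian level to $B_{\SH}$. Where you diverge is in the confinement mechanism. The paper does not neck-stretch: it fixes the almost complex structure to be a fixed cylindrical $\hat J_0$ on the collar $[\tfrac12,1]\times Y$, takes the Morse-perturbation parameter $\delta_i\to 0$, and uses Gromov--Floer compactness plus Gromov monotonicity (Lemma \ref{Invariance: Gromov monotonicity}, Oancea's Lemma 1) to show any cylinder between non-constant orbits crossing $\{\tfrac12\}\times Y$ has energy at least $C(J_0)$; the constant is $C(Y,\alpha)=\sup_J C(J)$, not $T_{\min}$. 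Your neck-stretching version could in principle yield the cleaner constant $T_{\min}$, but as written it has a gap: the confinement claim is asserted for a fixed generic $J$, yet neck-stretching is a statement about a degenerating family of almost complex structures. For a fixed $J$ the cylinders do not converge to a broken SFT building; you would either have to work with a sufficiently stretched $J$ (and then verify regularity and that the energy threshold is uniform over the cofinal family $H_{i,\delta}$), or replace the stretching by a monotonicity estimate for the fixed $J$ on the collar --- which is exactly what the paper does.

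Two further steps that you flag as ``technicalities'' are in fact where the substance lies, and neither is resolved in your sketch. First, the non-constant generators do \emph{not} form a subcomplex (the differential can send a non-constant orbit to a constant one, since constants have the lowest action), only a quotient complex; so ``apply Lemma \ref{lem svd} to the non-constant block'' is not well-defined as stated, and the bars of the full complex are not the union of the bars of the constant subcomplex and the non-constant quotient. The paper's Lemma \ref{invariance: short bars are preserved General} is precisely the orthogonality argument needed to show that short bars with left-endpoint above the constant range are nonetheless determined by the $W$-components of the differential; some such argument is indispensable. Second, in passing from $B(H)$ to $B_{\SH}(W,\lambda)$ the action reparametrization $s_h$ distorts bar lengths by a factor between $1$ and $r_0$ (Equation \eqref{eqn reparametrization function difference}), so a bar of length $<C$ in $B_{\SH}$ need not correspond to a bar of length $<C$ in $B(H)$. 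The paper handles this by choosing, for each pair of periods $t_1<t_2$, a convex radial Hamiltonian with $r_0$ close enough to $1$ (namely $r_0(t_2-t_1)<C(Y,\alpha)$); your ``passing to the direct limit as $T\to\infty$'' does not address this and, without it, the transported equality of short-bar multisets does not follow.
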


Before proving Porposition \ref{invariance: short bars SH vers}, we note that the constant $C(Y, \alpha)$ in Proposition \ref{invariance: short bars SH vers} will be defined as 
\[C(Y, \alpha):=\sup_{J \in \mathcal{J}(Y,\alpha)}C(J),\]
where $\mathcal{J}(Y,\alpha)$ denotes the set of $(d\alpha)$-compatible almost complex structure on $\ker\alpha$ and $C(J)$ is given by the following lemma:

\begin{lem}[Lemma 1 of \cite{Oancea06}]
	\label{Invariance: Gromov monotonicity}
	For any $J\in \mathcal{J}(Y, \alpha)$, there exists a constant $C(J)>0$ such that, if $u$ is a $\hat{J}$-holomorphic curve having its boundary components on both of $\{\tfrac{1}{2}\} \times Y$ and $\{1\} \times Y$, then, $E(u) \ge C$.
\end{lem}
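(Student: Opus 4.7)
The plan is to deduce the estimate from the classical Gromov monotonicity lemma for pseudo-holomorphic curves applied to the compact collar region $[1/2,1]\times Y$ equipped with the symplectic form $\omega = d(r\alpha)$ and the extended almost complex structure $\hat J$ determined by $\hat J|_{\ker\alpha}=J$ and $\hat J(\partial_r)=R_\alpha$. This extension is $\omega$-compatible, so the associated Riemannian metric $g = \omega(\cdot,\hat J\cdot)$ turns the collar into a compact almost-Hermitian manifold; in particular, for any $\hat J$-holomorphic map $u$, the symplectic energy $E(u)=\int u^*\omega$ coincides with the $g$-area of $u$.

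The core step is to fix an intermediate level, say $r^{\ast} = 3/4$, and invoke Gromov monotonicity to produce constants $r_\ast > 0$ and $k > 0$, depending only on $J$ and the fixed collar geometry, with the following property: for every $p \in \{3/4\}\times Y$, the geodesic $g$-ball $B_{r_\ast}(p)$ is contained in $(5/8,\,7/8)\times Y$, and for every $\hat J$-holomorphic map $u:\Sigma \to [1/2,1]\times Y$ with $p \in u(\mathrm{int}\,\Sigma)$ and $u(\partial\Sigma)\cap B_{r_\ast}(p)=\emptyset$, one has $\mathrm{Area}(u\cap B_{r_\ast}(p)) \ge k\, r_\ast^{2}$. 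Given this, for any $u$ satisfying the hypothesis of the lemma, restrict to a connected component $u_0$ of $u$ whose boundary meets both $\{1/2\}\times Y$ and $\{1\}\times Y$ (which exists by assumption), and apply the intermediate value theorem to the continuous function $r\circ u_0$ along a path in the domain connecting the two boundary components: this produces a point $z_0$ with $r(u_0(z_0)) = 3/4$. Setting $p := u_0(z_0)$, the ball $B_{r_\ast}(p)$ lies strictly inside $(5/8,\,7/8)\times Y$ and is therefore disjoint from $u(\partial\Sigma)\subset \{1/2,1\}\times Y$. Monotonicity then yields $E(u) \ge \mathrm{Area}(u\cap B_{r_\ast}(p)) \ge k\, r_\ast^{2}$, so $C(J) := k\, r_\ast^{2}$ has the desired property.

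The main technical obstacle is to verify that Gromov monotonicity gives constants that are uniform over the compact middle slice $\{3/4\}\times Y$ and independent of the domain $\Sigma$ and its conformal type. This uniformity is a consequence of the standard proof, which derives the area estimate from a differential inequality for the local area function $A(\rho) = \mathrm{Area}(u \cap B_\rho(p))$ combined with the isoperimetric inequality on small balls, depending only on an upper bound for the sectional curvature of $g$ and a lower bound for the injectivity radius in a compact neighborhood -- both of which are uniform on $[1/2,1]\times Y$ since $Y$ is compact. A secondary subtlety is that the hypothesis ``boundary components on both hypersurfaces'' allows disconnected $u$ whose components might separately touch only one side; however, the wording forces at least one connected component whose boundary meets both levels, and applying the argument to that component suffices to bound $E(u)$ from below by $C(J)$.
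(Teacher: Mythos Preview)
The paper does not prove this lemma; it simply quotes it as Lemma~1 of Oancea's paper, so there is no in-paper argument to compare against. Your approach via Gromov monotonicity on the compact collar, applied at an intermediate slice reached by the intermediate value theorem, is exactly the standard argument and is essentially what Oancea does.

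One small point: your claim that ``the wording forces at least one connected component whose boundary meets both levels'' is not actually a consequence of the hypothesis as stated. A disconnected $\hat J$-holomorphic curve could in principle have one component with boundary only on $\{1/2\}\times Y$ and another with boundary only on $\{1\}\times Y$. What saves this in the present paper is the application in Lemma~\ref{Invariance: Crossing Energy}: there the curve arises as $u_\infty|_{u_\infty^{-1}([1/2,1]\times Y)}$ for a connected Floer cylinder $u_\infty$ that both touches $\{1/2\}\times Y$ and has its asymptotics in $r>1$, so a path in the (connected) domain from a point mapping to $r=1/2$ to a point mapping to $r>1$ must cross $r=3/4$, and the component of the preimage containing that path has boundary on both levels. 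Alternatively, since $\hat J$ is SFT-like one can invoke the maximum principle for $r$ to rule out interior components with boundary only on the lower level. Either way, your monotonicity step then goes through and gives the desired $C(J)=k\,r_\ast^2$.
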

We note that we allow $C(Y,\alpha)$ to be infinite. 
In that case, Proposition \ref{invariance: short bars SH vers} means $B_{\SH}(W_1,\lambda_1) = B_{\SH}(W_2, \lambda_2)$.

\begin{proof}[Sketch of the proof of Proposition \ref{invariance: short bars SH vers}]
	We sketch the proof of Proposition \ref{invariance: short bars SH vers} here. 
	The detailed proof will appear after proving Lemmas \ref{Invariance: Crossing Energy}--\ref{invariance: short bars are preserved General}.
	
	First, we will prove a Hamiltonian Floer homology version of Proposition \ref{invariance: short bars SH vers}.
	More precisely, for a pair of convex radial Hamiltonian functions $H_j:\widehat{W} \to \mathbb{R}$ of slope $T$ such that 
	\[H_1|_{[1,\infty) \times \partial W_1} \equiv H_2|_{[1,\infty) \times \partial W_2}=h(r),\]
	as functions defined on 
	\[[1,\infty) \times \partial W_1 \simeq [1,\infty) \times Y \simeq [1,\infty) \times \partial W_2,\]
	we will prove that 
	\begin{gather}
		\label{eqn invariance 1}
		B(H_1)^{<C(Y,\alpha)}_+ = B(H_2)^{<C(Y,\alpha)}_+.
	\end{gather}
	
	After proving the Hamiltonian Floer homology version of it, we can prove Proposition \ref{invariance: short bars SH vers} in a similar way we did in Section \ref{subsection equivalence of two barcode entropy}. 
\end{proof}

As mentioned in the above sketch, we first consider a pair of convex radial functions $H_j:\widehat{W}_j \to \R$ agrees to each other on the cylindrical end part $[1, \infty) \times Y$. 
In order to handle their Hamiltonian Floer homology, we choose their Morse-perturbation data $(f_j, g_j)$ for $j = 1, 2$ as follows: 
We choose $(f_j,g_j)$ such that $f_1 \equiv f_2$ and $g_1 \equiv g_2$ on $[\tfrac{1}{2}, \infty) \times Y$. 
Then, for all sufficiently small $\delta >0$, there is a one-to-one relation between the set of non-constant periodic orbits of Hamiltonian vector fields of $H_1 + \delta (f_1+g_1)$ and $H_2 + \delta(f_2 + g_2)$. 
Moreover, the actions values of a pair of periodic orbits which are related to each other by the above one-to-one relation should agree.

Let us fix a sequence of positive real number $\{\delta_i\}_{i \in \N}$ such that $\delta_i$ converges to $0$ as $i \to \infty$. 
We also fix two sequence of almost complex structures 
\[\left\{J_{1,i} \in \mathcal{J}^{\text{SFT}}_{r_0}(W_1,d\lambda_1)\right\}_{i \in \N}, \left\{J_{2,i} \in \mathcal{J}^{\text{SFT}}_{r_0}(W_2,d\lambda_2)\right\}_{i \in \N},\] 
such that   
%
%
%
%
\begin{itemize}
	\item for each $i \in \N$, the pair $\left(H_j+\delta_i(f_j+g_j), J_{j,i}\right)$ is regular,
	\item $J_{1,i} = J_{2,i}$ in ${S^1\times[1/2, \infty)\times Y}$ for every $i\in \N$, and
	\item there exists  $J_{j,\infty} \in \mathcal{J}^{\text{SFT}}_{r_0}(W_j,d\lambda_j)$ such that $\set{J_{j,i}}_{i\in N}$ converges to $J_{j,\infty}$, and such that $J_{j,\infty}(t,r,p)\equiv\hat{J}_0(r,p)$ if $r \ge \tfrac{1}{2}$. 
\end{itemize}    
Together with the above chosen choices, we state and prove Lemma \ref{Invariance: Crossing Energy}.

\begin{lem}
	\label{Invariance: Crossing Energy}
	For any chosen data satisfying the above conditions, there exists a natural number $N$ and a constant $C(J_0)>0$ satisfying the following:
	If $i >N$, then every Floer cylinder $u$ satisfying that 
	\begin{itemize}
		\item $u$ connects non-constant periodic orbits of $H_j+\delta_i(f_j + g_j)$, and 
		\item $u$ intersects $\{\tfrac{1}{2}\} \times Y$,
	\end{itemize}
	should have energy bigger than or equal to $C(J_0)$, i.e., 
	\[E(u) > C(J_0).\]
\end{lem}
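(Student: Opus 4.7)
I argue by contradiction. Suppose the conclusion fails. Then, passing to a subsequence, one obtains Floer cylinders $u_i$ for the data $(H_j+\delta_i(f_j+g_j), J_{j,i})$, each connecting non-constant $1$-periodic orbits, meeting $\{\tfrac{1}{2}\}\times Y$, and satisfying $E(u_i) \to 0$.

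The non-constant $1$-periodic orbits $\hat{\gamma}, \check{\gamma}$ of $X_{H_j+\delta_i(f_j+g_j)}$ all lie in $\{r_\gamma\}\times Y \subset (1+r_h,r_0)\times Y$, i.e.\ strictly above $r=1$. Pick $(s_i,t_i)\in\R\times S^1$ with $u_i(s_i,t_i)\in\{\tfrac{1}{2}\}\times Y$ and translate so $s_i=0$. Since the two ends of $u_i$ lie above $r=1$ while $u_i(0,t_i)$ sits at $r=\tfrac{1}{2}<1$, the cylinder must sweep across $r=1$. Let $\Omega_i \subset \R\times S^1$ denote the connected component of $u_i^{-1}([\tfrac{1}{2},1]\times Y)$ containing $(0,t_i)$; by the preceding observation, $u_i(\partial\Omega_i)$ meets both $\{\tfrac{1}{2}\}\times Y$ and $\{1\}\times Y$.

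On the slab $[\tfrac{1}{2},1]\times Y$ the perturbed Hamiltonian reduces to $\delta_i f_j$: indeed $H_j\equiv 0$ here (the slab lies inside $W_j$), and $g_j\equiv 0$ (its support lies in $V_h\subset(1+r_h,r_0)\times Y$). Hence the Floer equation on $\Omega_i$ is
\[
\partial_s u_i + J_{j,i}\bigl(\partial_t u_i + \delta_i X_{f_j}\bigr) = 0,
\]
so $u_i|_{\Omega_i}$ is an $O(\delta_i)$-perturbation of a $J_{j,i}$-holomorphic map, and $J_{j,i}\to J_{j,\infty}\equiv \hat{J}_0$ in $C^\infty$ on $[\tfrac{1}{2},1]\times Y$. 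Standard Gromov compactness for pseudoholomorphic curves with vanishing inhomogeneous term yields a subsequential (possibly nodal) $\hat{J}_0$-holomorphic limit whose image is contained in $[\tfrac{1}{2},1]\times Y$ and still meets both $\{\tfrac{1}{2}\}\times Y$ (at the limit of $(0,t_i)$, which remains in this compact slice) and $\{1\}\times Y$ (since the ends of $u_i$ stay uniformly at height $r_\gamma > 1$, at least one irreducible component of the limit must bridge to the upper boundary). By Lemma \ref{Invariance: Gromov monotonicity}, the energy of this limit is bounded below by $C(\hat{J}_0)>0$, contradicting $\liminf E(u_i|_{\Omega_i})\le \liminf E(u_i)=0$. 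Setting $C(J_0):=\tfrac{1}{2}C(\hat{J}_0)$ and taking $N$ sufficiently large yields the required constants; the identical argument applied with the other filling and the max of the two thresholds $N$ completes the proof.

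The key technical point, and the main obstacle, is the Gromov compactness step for the restricted maps $u_i|_{\Omega_i}$: one must ensure that the bridging energy is not entirely absorbed by bubbles concentrated at a single point, and that the limit genuinely contains an irreducible component joining the two hypersurfaces. This is guaranteed by the uniform separation of the level $\{\tfrac{1}{2}\}\times Y$ from the heights $r_\gamma > 1$ where the ends of $u_i$ live, which pins down points on both boundary components that cannot collide in the limit.
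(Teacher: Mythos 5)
Your strategy is the same as the paper's: argue by contradiction, pass to a limit by Gromov--Floer compactness, observe that on the collar $[\tfrac{1}{2},1]\times Y$ the equation degenerates to the $\hat J_0$-holomorphic curve equation, and invoke the monotonicity Lemma \ref{Invariance: Gromov monotonicity}. Two points in your write-up are weaker than the paper's argument, though both are repairable. First, your negation only produces a sequence with $E(u_i)\to 0$, which proves the existence of \emph{some} positive lower bound but not the specific constant $C(\hat J_0)$; your final assignment $C(J_0):=\tfrac{1}{2}C(\hat J_0)$ does not follow from an argument that only rules out energies tending to zero. Since the value of this constant is used downstream (it feeds into $\eta=C(J_0)$ in Lemma \ref{invariance: short bars are preserved General} and into $C(Y,\alpha)=\sup_J C(J)$, which must depend only on $(Y,\alpha)$), you should instead assume $E(u_i)\le C(\hat J_0)$ for infinitely many $i$ and derive the contradiction from $E(u_\infty)\ge C(\hat J_0)$ on the slab versus $E(u_\infty)\le\liminf E(u_i)$, exactly as the paper does.

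Second, and more substantively, you restrict to the components $\Omega_i=u_i^{-1}([\tfrac12,1]\times Y)$ \emph{before} taking limits and then appeal to ``Gromov compactness'' for these restricted maps on varying domains whose boundaries are preimages of hypersurfaces (not Lagrangian boundary conditions). You correctly identify this as the main obstacle, but the justification you give is not a proof: compactness for curves on degenerating domains with moving, non-Lagrangian boundary is not standard. The paper avoids this entirely by reversing the order of operations: apply $C^\infty_{\mathrm{loc}}$ Gromov--Floer compactness to the full cylinders $u_i$ on the fixed domain $\R\times S^1$ (after translating so that $u_i(0,\cdot)$ meets $\{\tfrac12\}\times Y$), obtain a limit Floer cylinder $u_\infty$ for $(H_j,J_{j,\infty})$ whose restriction to $u_\infty^{-1}([\tfrac12,1]\times Y)$ is then a genuine $\hat J_0$-holomorphic curve with boundary on the two hypersurfaces, and only then apply the monotonicity lemma. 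I recommend restructuring your compactness step this way; with that change and the corrected contradiction hypothesis, your argument matches the paper's proof.
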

\begin{proof}
	Let us assume that there exist infinitely many $i \in \N$ such that there exists a Floer cylinder $u_i$ satisfying 
	\begin{itemize}
		\item $u_i$ connects non-constant periodic orbits of $H_j+\delta_i(f_j + g_j)$,  
		\item $u_i$ intersects $\{\tfrac{1}{2}\} \times Y$, and
		\item $E(u_i) \le C(J_0)$.  
	\end{itemize}
	By a suitable translation, we can assume that $u_i(0,S^1)$ intersects $\{\tfrac{1}{2}\} \times Y$ without loss of generality. 
	By the Gromov-Floer compactness, there exists a subsequence of $u_i$ converging to a Floer cylinder $u_\infty\in\mathcal{M}(x,y;H,J_\infty)$.
	Then, one can check that $u_\infty(0,S^1)$ intersects $\{\tfrac{1}{2}\} \times Y$.
	
	We note that $u_\infty$ is a $J_\infty$-holomorphic curve.
	On the region $[\tfrac{1}{2}, 1) \times Y$, the ($S^1$-family of) almost complex structure $J_\infty$ agrees with $\hat{J}_0$. 
	Thus, $u_\infty$ is a $\hat{J}_0$-holomorphic curve on the region.
	In other words, $u_\infty|_{u_\infty^{-1}([1/2,1] \times Y)}$ is a $\hat{J}_0$-holomorphic curve satisfying the conditions of Lemma \ref{Invariance: Gromov monotonicity}. 
	Thus, $E(u|_{u^{-1}([1/2,1]\times Y)}) > C(\hat{J}_0)$ by Lemma \ref{Invariance: Gromov monotonicity}, and it contradicts that $E(u) = \lim_{i \to \infty} E(u_i) \le C(\hat{J}_0)$.
	It completes the proof. 
\end{proof}

In $[1/2,\infty)\times Y$, the Floer equation for $(H_1+\delta_i(f_1+g_1),J_{1,i})$ is identical with the Floer equation for $(H_2+\delta_i(f_2+g_2),J_{2,i})$. 
Therefore, by Lemma \ref{Invariance: Crossing Energy}, for any sufficiently large $i$, the $\left<\partial_{j,i} x, y\right>$ is independent of $j$ for two Floer boundary maps, if the Hamiltonian action difference between $x$ and $y$ is smaller than $C=C(J_0)$.

\begin{lem}
	\label{invariance: short bars are preserved General}
	Let $(V_1, \ell_1), (V_2,\ell_2), (W,\ell_W)$ be orthogonalizable $\mathbb{k}$-spaces. Consider $(C=V_1 \oplus W, \ell_C=\ell_1 \oplus \ell_W)$(resp. $(D=V_2 \oplus W, \ell_D=\ell_2 \oplus \ell_W)$) endowed with a linear map $\partial_C:C\to C$(resp. $\partial_D:D\to D$) such that $\partial_C^2=0$(resp. $\partial_D^2=0$) and $\partial_C$(resp. $\partial_D$) decreases the action, where 
	\[\ell_i \oplus \ell_W(a,b) = \max \{\ell_i(a), \ell_W(b), \text{  for all  } i = 1, 2.\}\] 
	Let us assume that there there exists a constant $\eta>0$ and $E>0$ such that
	\begin{enumerate}
		\item $\ell_j(v_j)\le E<\ell_W(w)$ for all $w\in W$, $v_j \in V_j$ for $j=1,2$
		\item and $\ell_W(\pi_W (\partial_C w) - \pi_W (\partial_D w))<\ell_W(w)-\eta$ for all $w\in W$. ($W$ is viewed as a subspace of $V_i\oplus W$ and $\pi_W$ means the natural projection to $W$ from $V_i \oplus W$.)
	\end{enumerate}
	Denote $\mathbb{B}_C=\set{x_1^C, \cdots, x_{m_1}^C, y_1^C,\cdots y_{n_1}^C, z_1^C,\cdots, z_{n_1}^C}$ the orthogonal basis of $C$ such that $\partial_C x^C_i=0$, $\partial_C z^C_j=y^C_j$. Let $C^{<\eta}_E$ be the multi-set of intervals $(\ell_C(y^C_j), \ell_C(z^C_j)]$ such that $\ell_C(z^C_j)-\ell_C(y^C_j)<\eta$ and $E < \ell_C(y^C_j)$. Define multi-set $D^{<\eta}_E$ in the same way. Then $C^{<\eta}_E=D^{<\eta}_E$.
\end{lem}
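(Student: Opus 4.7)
The plan is to reduce the problem to a cleaner statement about two differentials on a single shared orthogonalizable space, and then to match the short bars via rank-counting on this shared space.

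First, I would consider the orthogonalizable quotient $\bar{W} := W / W^{\le E}$ (where $W^{\le E} = \{w \in W : \ell_W(w) \le E\}$), equipped with the induced non-Archimedean norm $\bar{\ell}_W$. Hypothesis (1) implies $V_1 \subseteq C^{\le E}$ and $V_2 \subseteq D^{\le E}$, hence the natural projections yield identifications $C/C^{\le E} \cong \bar{W} \cong D/D^{\le E}$. Since $\partial_C$ and $\partial_D$ decrease action, they descend to differentials $\bar{\partial}_C, \bar{\partial}_D : \bar{W} \to \bar{W}$ with $\bar{\partial}_C^2 = \bar{\partial}_D^2 = 0$, and hypothesis (2) becomes the statement that $T := \bar{\partial}_C - \bar{\partial}_D$ strictly reduces action by more than $\eta$. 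A direct check with the SVD basis $\mathbb{B}_C$ shows that the finite bars of $(\bar{W}, \bar{\partial}_C)$ correspond bijectively, with identical endpoints, to the bars of $(C, \partial_C)$ whose left endpoint exceeds $E$: bars above $E$ survive intact, bars crossing $E$ become infinite bars in the quotient, and bars entirely below $E$ vanish. The analogous statement holds for $D$. It therefore suffices to show that the short finite bars (of length less than $\eta$) of $(\bar{W}, \bar{\partial}_C)$ and $(\bar{W}, \bar{\partial}_D)$ coincide as multi-sets.

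For this reduced problem, I would apply Lemma \ref{lem svd} to $\bar{\partial}_C : \bar{W} \to \bar{W}$ to obtain orthogonal bases with pairs $(v_i, w_i)$ such that $\bar{\partial}_C v_i = w_i$. For each short-bar pair, namely one with $\bar{\ell}_W(v_i) - \bar{\ell}_W(w_i) < \eta$, one computes $\bar{\partial}_D v_i = w_i + T v_i$ with $\bar{\ell}_W(T v_i) < \bar{\ell}_W(v_i) - \eta < \bar{\ell}_W(w_i)$, so $\bar{\ell}_W(\bar{\partial}_D v_i) = \bar{\ell}_W(w_i)$. This strongly suggests that each short bar of $\bar{\partial}_C$ has a counterpart short bar of $\bar{\partial}_D$ with identical endpoints. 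To promote this observation to a bijection of multi-sets, I would proceed via rank-counting on filtered homologies: for $a < b$ with $b - a < \eta$, the operator $T$ restricted to $\bar{W}^{<a}$ takes values in $\bar{W}^{<a-\eta}$, and combined with orthogonalizability and Lemma \ref{lem one to one map}, this should force $\dim H^{<a}(\bar{\partial}_C) = \dim H^{<a}(\bar{\partial}_D)$ together with $\operatorname{rank}(\iota^{a,b}_C) = \operatorname{rank}(\iota^{a,b}_D)$ in the relevant range, yielding the desired coincidence of short-bar multi-sets.

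The main technical obstacle will be the rank-counting step: precisely verifying that the $\eta$-action-drop of $T$ translates into an exact equality of short-bar multiplicities rather than merely an $\eta$-interleaving, which by itself would only match long bars. One needs to rule out both the creation and the destruction of short bars in the passage from $\bar{\partial}_C$ to $\bar{\partial}_D$, despite possible changes in the long-bar structure. The strict inequality in hypothesis (2) should provide the necessary margin, but the argument likely requires either a careful induction on action levels or the explicit construction of an orthogonal change of basis that intertwines the two differentials modulo terms of strictly smaller action, so that the SVD structure on short pairs is transported unchanged.
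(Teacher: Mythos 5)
Your reduction to the quotient is fine (in fact, hypothesis (1) forces $W^{\le E}=\{0\}$, so $C^{\le E}=V_1$ and the quotient is just $W$ with the induced differential $\pi_W\circ\partial_C|_W$), and your computation $\bar{\ell}_W(\bar{\partial}_D v_i)=\bar{\ell}_W(w_i)$ for a short pair is exactly the right first observation. But the step you flag as the "main technical obstacle" is the entire content of the lemma, and the primary mechanism you propose for it --- forcing $\dim H^{<a}(\bar{\partial}_C)=\dim H^{<a}(\bar{\partial}_D)$ and equality of the ranks of the structure maps --- is not just unproven, it is false. Take $\bar{W}$ two-dimensional with orthogonal basis $u,v$, $\bar{\ell}_W(v)-\bar{\ell}_W(u)=10$, $\bar{\partial}_C=0$ and $\bar{\partial}_D v=u$, with $\eta=1$: hypothesis (2) holds, the short-bar multi-sets agree (both empty), yet $\dim H^{<a}$ differs for all $a$ above $\bar{\ell}_W(v)$. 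The difference $T=\bar{\partial}_D-\bar{\partial}_C$ dropping action by more than $\eta$ constrains only the bars of length less than $\eta$; the long-bar structure, and hence the filtered Betti numbers and ranks, can change arbitrarily. So no rank-counting "in the relevant range" can close the argument.

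The viable route is your second, undeveloped suggestion, and it is what the paper does: after a change of basis one may assume the short-bar generators $z^C_j$ (those with $E<\ell_C(y^C_j)$ and bar length $<\eta$) lie in $W$; one then transports them verbatim to $D$, sets $z^D_j:=z^C_j$ and $y^D_j:=\partial_D z^D_j$, checks $\ell_D(y^D_j)=\ell_C(y^C_j)$ via hypotheses (1)--(2), and --- this is the crux --- proves directly that $\{z^D_j,\,y^D_j\}$ is an orthogonal family in $(D,\ell_D)$. The orthogonality verification uses the ultrametric equality $\ell_W(a-b)=\max\{\ell_W(a),\ell_W(b)\}$ when a cancellation $\ell_D(w_1+\partial_D w_2)<\ell_D(w_1)=\ell_D(\partial_D w_2)$ is assumed, which combined with hypothesis (2) yields $\ell_D(\partial_D w_2)<\ell_W(w_2)-\eta<\ell_C(\partial_C w_2)=\ell_D(\partial_D w_2)$, a contradiction. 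Extending this orthogonal family to an orthogonal basis of $D$ gives, via Lemma \ref{lem one to one map}, at least as many short bars in $D$ as in $C$ with the same endpoints, and the reverse inequality follows by symmetry. Without this orthogonality argument your proposal produces candidate pairs with the correct action values but cannot conclude they appear as bars of $D$ with the stated multiplicities.
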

\begin{proof}
	Let $j=i_1, i_2, \cdots, i_k$ be the bars satisfying $\ell_C(z^C_j)-\ell_C(y^C_j)<\eta$ and $E<\ell_C(y^C_j)$. For each $z^C_j$ and $y^C_j$, let $z^W_j$ and $y^W_j$ be the projection on $W$. By orthogonality and assumption (1), the vector space $V_1$ is generated by the elements of $\mathbb{B}_C$ with an evaluation value less than $E$. With some basis change, we now assume $z^C_j$ are the vectors in $W$ and still $\partial_C z^C_j =y^C_j$ with $\ell_C(z^C_j)-\ell_C(y^C_j)<\eta$ for $j=i_1,i_2,\cdots,i_k$.
	
	Let $z_j^D:=z_j^C$ as an element in $W\subset V_2 \oplus W$ and let $y_j^D:=\partial_D z_j^D$. Since $z_j^D$ is an element in $W$, $\ell_D(z_j^D)=\ell_W(z_j^D)=\ell_C(z_j^C)$. We also claim that $\ell_D(y_j^D)=\ell_C(y_j^C)$. More generally, consider $w\in \left< z^C_{i_1}, z^C_{i_2}, \cdots, z^C_{i_k}\right>\subset W$. Then $\partial_C(w)$ is a linear combinations of $y^C_j$ with $j=i_1,i_2,\cdots,i_k$. By the orthogonality and the assumption on $z^C_j$, $\ell_C(\partial_C(w))>\ell_C(w)-\eta$. By assumption (1), $\ell_C(\partial_C(w))=\ell_W(\pi_W\circ(\partial_C w))$. By assumption (2) and the orthogonality, $\ell_C(\partial_C(w))=\ell_D(\partial_D(w))$ which is greater than $E$. Especially, $\ell_D(y_j^D) = \ell_C(y_j^C)>E$.
	
	Since $\left< y_{i_1}^D, y_{i_2}^D,\cdots, y_{i_k}^D\right>$ is the subspace of $\ker \partial_D$, it is enought to show that $\set{z_j^D, y_j^D=\partial z_j^D:j=i_1, i_2, \cdots, i_k}$ is orthogonal with respect to $\ell_D$. Extending this orthogonal set, we can find an orthogonal basis of $D$ containing every $x^D_{i_j}, y^D_{i_j}$. Thus the number of bars in the statement for $D$ is at least $k$. Applying this from $D$ to $C$, we get the equality for two numbers.
	
	\noindent\textit{Orthogonality for $\set{z_j^D, y_j^D=\partial z_j^D:j=i_1, i_2, \cdots, i_k}$ with respect to $\ell_D$}: Every vector in the subspace spanned by this set of vectors is in form of $w_1+\partial_D(w_2)$ where $w_i$ is the vector in the subspace of $W$ spanned by the set $\set{z_j^D:j=i_1,i_2,\cdots,i_k}$. To prove the orthogonality, it is enough to prove the following equation:
	\begin{equation}
		\label{equation: orthogonality for D}
		\ell_D(w_1+\partial_D(w_2)) = \max(\ell_D(w_1), \ell_D(\partial_D(w_2))).    
	\end{equation}
	
	\noindent If the equation \ref{equation: orthogonality for D} holds, for $v=\sum_{n\in I} a_n \cdot z_{i_n}^D + \sum_{m\in J} b_m \cdot y_{i_m}^D$ with non-zero $a_n$ and $b_m$,
	\begin{align*}
		\ell_D(v)
		&= \max\left(\ell_D\left(\sum_{n\in I} a_n \cdot z_{i_n}^D\right), \ell_D\left(\partial_D \sum_{m\in J} b_m \cdot z_{i_m}^D\right)\right)\\
		&= \max\left(\ell_C\left(\sum_{n\in I} a_n \cdot z_{i_n}^C\right), \ell_C\left(\partial_C \sum_{m\in J} b_m \cdot z_{i_m}^C\right)\right)\\
		&= \max\left(\ell_C\left(\sum_{n\in I} a_n \cdot z_{i_n}^C\right), \ell_C\left(\sum_{m\in J} b_m \cdot y_{i_m}^C\right)\right)\\
		&= \max\left(\max_{n\in I}\set{\ell_C(z_{i_n}^C)}, \max_{m\in J}\set{\ell_C(y_{i_n}^C)}\right)\\
		&= \max\left(\max_{n\in I} \set{\ell_D(z_{i_n}^D)}, \max_{m\in J}\set{\ell_D(y_{i_n}^D)}\right).
	\end{align*}
	When $\ell_D(w_1)\ne \ell_D(\partial_D w_2)$, equation \ref{equation: orthogonality for D} is from the assumption that $D$ is orthogonalizable. If there exists $w_1, w_2$ with $\ell_D(w_1) = \ell_D(\partial_D w_2)=e$ and $\ell_D(w_1+\partial_D(w_2))<e$, 
	\[\ell_W (\pi_W\circ (w_1+\partial_D w_2))\le \ell_D(w_1+\partial_D(w_2))<e.\]
	Then by the orthogonality of $W$, 
	\begin{align*}
		e &= \max(\ell_W (\pi_W\circ (w_1+\partial_D w_2)), \ell_W (\pi_W\circ (w_1+\partial_C w_2)))\\
		&= \ell_W (\pi_W (w_1+\partial_D w_2)-\pi_W (w_1+\partial_C w_2))\\
		&= \ell_W (\pi_W (\partial_D w_2) - \pi_W (\partial_C w_2))< \ell_W(w_2) - \eta\quad(\because \text{ Assumption (2)})\\
		&< \ell_C(\partial_C(w_2))=\ell_D(\partial_D(w_2))=e,
	\end{align*}
	and we get the contradiction and the set $\set{z_j^D, y_j^D=\partial z_j^D:j=i_1, i_2, \cdots, i_k}$ is orthogonal with respect to $\ell_D$.
\end{proof}

Now we prove Proposition \ref{invariance: short bars SH vers}.

\begin{proof}[Proof of Proposition \ref{invariance: short bars SH vers}]
	As mentioned in the sketch of the proof, we start with a pair of convex radial Hamiltonian functions $H_j: \widehat{W}_j \to \R$ of the same slope $T$ for $j=1, 2$.
	In other word, we first prove Equation \eqref{eqn invariance 1}.
	
	Let us wrap up the following facts from Section \ref{subsection barcode entropy of a Hamiltonian function}: 
	From Lemma \ref{lem convex radial} and its proof, we can check that, for any sufficiently large $i$, the barcode $B\left(H_j + \delta_i(f_j+g_j)\right)$ satisfies that
	\begin{itemize}
		\item every bar is in the form of $(a,b]$ or $(a,\infty)$,
		\item if a bar has an endpoint greater than $\delta_i$, then the end point should be contained in 
		\[s_h\left(\Spec^{<T}(R_\alpha)\right)\cup \left(s_h\left(\Spec^{<T}(R_\alpha)\right)+\delta\right),\]
		\item there are exactly $\dim(H^*(W))$-many bars containing $(\delta_i, s_h(T_{\min})]$.
		\item for any $t \in \Spec^{<T}(R_\alpha)$, the number of bars having $s_h(t)$ as an endpoint is the same as the number of Reeb orbits of period $t$,
		\item for any $t \in \Spec^{<T}(R_\alpha)$, the number of bars having $s_h(t)+\delta_i$ as an endpoint is the same as the number of Reeb orbits of period $t$,
		\item (Lemma \ref{convex radial: no differential}) there is no bar in the form of $(s_h(t), s_h(t)+\delta_i]$, and
		\item the number of bars being contained $(0,\delta_i]$ is the same as 
		\[\left(\text{the number of critical points of  } f- \dim(H^*(W))\right)/2.\]
	\end{itemize}
	We also proved that the barcode $B(H_j)$ has similar properties because $B(H_j)$ is defined as the direct limit of $B\left(H_j+\delta_i (f_j+g_j)\right)$ as $i \to \infty$.
	More precisely, $B(H_j)$ satisfies that
	\begin{itemize}
		\item every bar is in the form of $(a,b]$ or $(a,\infty)$, where
		\[a, b \in s_h(\set{0}\cup \Spec^{<T}(R_\alpha)),\]
		\item the number of bars whose left-endpoints are $0$ is the same as $\dim(H^*(W))$, and
		\item for any $t \in \Spec^{<T}(R_\alpha)$, the number of bars having $s_h(t)$ as an endpoint is the same as 
		\[2\cdot \text{(the number of Reeb orbits of period  } t).\]
	\end{itemize}
	
	We also note that by choosing sufficiently small $f_j$ and $g_j$ (in the sense of $\|\cdot\|_{\infty, W_{r_0}}$-norm), one can assumption that $\delta_i \|f_j + g_j\|_{\sup, W_{r_0}} \leq \delta_i$ without loss of generality.
	Under the assumption, one can find s a $\delta_i$-matching $\mu_{j,i}: B(H_j) \to B\left(H_j+\delta_i(f_j+g_j)\right)$. 
	Let $i$ be sufficiently large so that $2 \delta_i$ is smaller then the shortest bar in $B(H_j)$. 
	We note that $B(H_j)$ has finitely many bars, one can find the shortest bar in $B(H_j)$. 
	Therefore, the image of $\mu_i$ is $B(H_j)$. 
	Moreover, let us assume that $3\delta_i$ is smaller than 
	\[\min\{|s_h(t_1)-s_h(t_2)|, s_h(T_{\min}) | t_1, t_2 \in \Spec^{<T}(R_\alpha)\}.\]
	Then, every bar in the form of $(s_h(t_1), s_h(t_2)]$ (resp.\ $(s_h(t), \infty)$) in $B(H_j)$ is matched with one of 
	\[(s_h(t_1),s_h(t_2)], (s_h(t_1),s_h(t_2)+\delta_i], (s_h(t_1)+\delta_i,s_h(t_2)], \text{  or  } (s_h(t_1)+\delta_i,s_h(t_2)+\delta_i] (\text{  resp.\ } (s_h(t), \infty) \text{  or  } (s_h(t)+\delta_i, \infty))\] 
	in $B\left(H_j+\delta_i(f_j+g_j)\right)$. 
	Every bar in one of the above forms is longer than $2\delta_i$, thus all bars belong to the co-image of $\mu_i$. 
	%
	
	In order to prove Equation \eqref{eqn invariance 1}, let us choose a bar from $B(H_j)^{<C(J_0)}_+$.
	The above arguments guarantee that for any sufficiently large $i$, there is a corresponding bar in $B\left(H_j +\delta_i (f_j + g_j)\right)$. 
	Moreover, since the length of the corresponding bar is different from that of the original bar in $B(H_j)^{<C(J_0)}_+$ by $\delta_i$, for any sufficiently large $i$, the corresponding bar is also shorter than $C(J_0)$. 
	
	Now, we apply Lemma \ref{invariance: short bars are preserved General} for a sufficiently large $i$, in the following set up:
	\begin{itemize}
		\item $V_j$: the vector space generated by the critical points of the Morse function $f_j$.
		\item $\ell_j$: the Hamiltonian action functional of $\cA_{H_j+\delta_i(f_j+g_j)}$, which is $\delta_i f(p)$ for the constant solution $p \in \text{Crit}f_j$.
		\item $W$: the vector space generated by non-constant periodic orbits of $H_j+\delta_i(f_j+g_j)$.
		\item $\ell_W$: the Hamiltonian action functional of $\cA_{H_j+\delta_i(f_j+g_j)}$, which is given by $\ell_W(\check\gamma)=s_h(t_\gamma)+\delta$, $\ell_W(\hat\gamma)=s_h(t_\gamma)$,
		where $\gamma$ denote the Reeb orbit with period $t_\gamma<T$.
		\item $E=\delta_i$ and $\eta=C(J_0)$.
	\end{itemize}
	We note that the second assumption of Lemma \ref{invariance: short bars are preserved General} holds because of Lemma \ref{Invariance: Crossing Energy} and the arguments following Lemma \ref{Invariance: Crossing Energy}.
	
	By Applying Lemma \ref{invariance: short bars are preserved General}, one can conclude that 
	\[B\left(H_1 + \delta_i (f_1+g_1)\right)_{\delta_i}^{<C(J_0)} = B\left(H_2 + \delta_i (f_2+g_2)\right)_{\delta_i}^{<C(J_0)},\]
	where $B\left(H_j + \delta_i (f_j+g_j)\right)_{\delta_i}^{<C(J_0)}$ is the set of bar in B$\left(H_j+\delta_i(f_j +g_j)\right)$ such that 
	\begin{itemize}
		\item the length of the bar is shorter than $C(J_0)$, and 
		\item the left-endpoint of the bar is bigger or equal to $\delta_i$. 
	\end{itemize}
	
	We note that, by applying $\delta_i$ matching $\mu_{j,i}$ for $j =1,2$, one concludes 
	\[B(H_j)^{<C(J_0)}_+=\mu_{j,i}\left(B\left(H_j + \delta_i (f_j+g_j)\right)_{\delta_i}^{<C(J_0)}\right).\]
	Thus, as multi-sets of intervals, 
	\[B(H_1)^{<C(J_0)}_+ = B(H_2)^{<C(J_0)}_+.\]
	
	So far, we proved Equation \eqref{eqn invariance 1}. 
	Proposition \ref{invariance: short bars SH vers} can be proven from Equation \eqref{eqn invariance 1}, as follows:
	We recall that every bar in $B_{\SH}(W_j,\lambda_j)^{C(J_0)}_+$ is the form of $(t_1,t_2]$ with $t_1, t_2 \in \Spec(R_\alpha)$. 
	For a fixed pair $t_1, t_2 \in \Spec(R_\alpha)$, let us assume that $B_{\SH}(W_j,\lambda_j)^{C(J_0)}_+$ has $n_j$-many $(t_1,t_2]$ bars.
	We note that because of non-degeneracy of $\alpha$, $n_j < \infty$. 
	
	We choose a convex radial Hamiltonian functions $H_j :\widehat{W}_j \to \R$ for $j =1,2$ satisfying the followings:
	\begin{itemize}
		\item The slope $T$ is bigger than $t_2$ and is not a period of any closed Reeb orbits.
		\item $H_j$ is linear outside of $W_{r_0}$ for some $r_0 < \frac{C(Y,\alpha)}{t_2 -t-1}$. 
		\item The pair $H_1$ and $H_2$ satisfies every condition given above, thus Equation \eqref{eqn invariance 1} holds.
	\end{itemize} 
	We recall that $C(Y,\alpha) = \sup_{J \in \mathcal{J}(Y,\alpha)}C(J)$.
	
	One can easily check that the diagram in \eqref{eqn commuting diagram} guarantees that in $B(H_j)$, there exist exactly $n_j$-many bars in the form of $(s_{H_j}(t_1),s_{H_j}(t_2)]$.
	And, one also can easily check that 
	\[s_{H_j}(t_2) - s_{H_j}(t_1) < r_0(t_2 -t_1) < C(Y,\alpha).\]
	Thus, by Equation \ref{eqn invariance 1}, we have $n_1 = n_2$ for any pair $t_1$ and $t_2$. 
	It completes the proof. 
\end{proof}

Now we are ready to prove the main theorem of this subsection, i.e., Theorem \ref{invariance: main thm}.
\begin{proof}[Proof of Theorem \ref{invariance: main thm}]
	First, we fix $\epsilon < C(Y,\alpha)$. 
	For the fixed $\epsilon$,  
	\begin{align}
		\label{eqn invariance 2}
		\begin{split}
			\hbar^{SH}_\epsilon(W_j,\lambda_j)= & \limsup_{T \to \infty} \frac{1}{T} \log \Big(n_\epsilon\big(\tru(B_{SH}(W_j, \lambda_j), T)\big) \Big) \\ 
			= & \limsup_{T \to \infty} \frac{1}{T} \log \Big(n_\epsilon\big(\tru(B_{SH}(W_j, \lambda_j), T+\epsilon)\big) \Big).
		\end{split}
	\end{align}
	We point out that $n_\epsilon\big(\tru(B_{SH}(W_j, \lambda_j), T+\epsilon)$ is the number of bars in $B_{\SH}(W_j,\lambda_j)$ satisfying that 
	\begin{itemize}
		\item[(a)] the length of the bar is larger than or equal to $\epsilon$, and
		\item[(b)] the left-endpoint of the bar is smaller than or equal to $T$. 
	\end{itemize}
	
	The bars in $B_{\SH}(W_j,\lambda_j)$ satisfying (b) can be classified into the following four types:
	\begin{itemize}
		\item Type I: bars longer than $\epsilon$ with right endpoint larger than $T$,
		\item Type II: bars longer than $\epsilon$ with right endpoint smaller than $T$,
		\item Type III: bars shorter than $\epsilon$ with right endpoint larger than $T$,
		\item Type IV: bars shorter than $\epsilon$ with right endpoint smaller than $T$.
	\end{itemize}
	
	Let us denote $n_{j,\epsilon}(X, T)$ by the number of bars in Type X. 
	We note the following two facts:
	\begin{itemize}
		\item because of the facts proven in Section \ref{subsection barcode entropy of a Hamiltonian function},
		\[n_{j,\epsilon}(\text{I},T)+2 \cdot n_{j,\epsilon}(\text{II},T)+n_{j,\epsilon}(\text{III},T)+2 \cdot n_{j,\epsilon}(\text{IV},T)=\dim H^*(W_j)+2 \cdot (\text{ the number of closed Reeb orbits with period < T}).\]
		\item By Proposition \ref{invariance: short bars SH vers}, $n_{j,\epsilon}(\text{III}, T)$ and $n_{j,\epsilon}(\text{IV},T)$ are independent on $j=1,2$. 
	\end{itemize}
	
	By combining the above two facts, one can define a number $n_{\epsilon}(Y,T)$ independent of $j=1,2$ as follows:
	\begin{align*}
		n_{\epsilon}(Y,T):=& n_{j,\epsilon}(\text{I},T)+2 \cdot n_{j,\epsilon}(\text{II},T) - \dim H^*(W_j) \\ 
		=& 2 \cdot (\text{ the number of closed Reeb orbits with period < T}) - n_{j,\epsilon}(\text{III},T)-2 \cdot n_{j,\epsilon}(\text{IV},T).
	\end{align*}
	
	We recall that 
	\begin{gather}
		\label{eqn invariance 3}
		n_\epsilon\big(\tru(B_{SH}(W_j, \lambda_j), T+\epsilon)\big) = n_{j,\epsilon}(\text{I},T) + n_{j,\epsilon}(\text{II},T).
	\end{gather}
	And, one can easily check that
	\begin{gather}
		\label{eqn invariance 4}
		\frac{n_\epsilon(Y,T)+\dim H^*(W_j)}{2} \leq n_{j,\epsilon}(\text{I},T) + n_{j,\epsilon}(\text{II},T) \leq  n_\epsilon(Y,T)+\dim H^*(W_j).
	\end{gather}
	By combining Equations \eqref{eqn invariance 2}--\eqref{eqn invariance 4}, one can conclude that, for $j=1,2$,
	\[\hbar_\epsilon^{\SH}(W_j, \lambda_1):=\limsup_{T\to \infty} \frac{1}{T}\log^+ n_\epsilon(Y,T).\]
	Therefore, $\hbar_\epsilon^{\SH}(W_1, \lambda_1)=\hbar_\epsilon^{\SH}(W_2, \lambda_2)$. 
	Moreover, by Definition \ref{def SH barcode entropy}, we have $\hbar^{\SH}(W_1, \lambda_1)=\hbar^{\SH}(W_2, \lambda_2)$.
\end{proof}

\subsection{$\SH$-Barcodes and $\RFH$-Barcodes} In this subsection, we will introduce different persistence module that can be defined for Liouville domain $(W,d\lambda)$. Then, we will prove that its Barcode entropy agrees with the $\SH$-Barcode entropy $\hbar^{\SH}(W, \lambda)$.

\subsubsection{Persistence module of positive Symplectic homology}
Let $\varepsilon$ be a positive real number smaller than $T_{\min}$, the shortest period of closed Reeb orbits. For every $a>\varepsilon$, the exact sequence of chain complexes
\[0\rightarrow \CF^{<\varepsilon}(H, J)\xrightarrow{\iota} \CF^{<a}(H,J)\xrightarrow{\pi}\CF^{[\varepsilon,a)}(H,J)\rightarrow 0\]
induced by the natural inclusion and projection give rise to the exact triangle
\begin{equation}
\label{Comparision: exact triangle for SH+, Ham vers}
\begin{tikzcd}[column sep=small]
& \HF^{<\varepsilon}(H,J) \arrow[dr, "\iota_*"] & \\
  \HF^{[\varepsilon,a)}(H,J)\arrow[ur, "\delta"]&   & \HF^{<a}(H,J)\arrow[ll, "\pi_*"]
\end{tikzcd}.
\end{equation}
For two Floer data $(H_1, J_1), (H_2, J_2)$, two exact triangles commute with the continuation map $\Phi_{(H_1, J_1), (H_2, J_2)}$. Taking a direct limit, we get an exact triangle
\begin{equation}
\begin{tikzcd}[column sep=small]
\label{Comparision: exact triangle for SH+ with e}
& \SH^{<\varepsilon}(W, \lambda) \arrow[dr, "\iota^a"] & \\
\SH^{[\varepsilon,a)}(W, \lambda)\arrow[ur, "\delta^a"]&
& \SH^{<a}(W, \lambda)\arrow[ll, "\pi^a"]
\end{tikzcd}.
\end{equation}

As mentioned in Previous chapter, $\SH^{<\varepsilon}(W, \lambda)$ equals to $H^*(W)$, the singular cohomology of $W$. With an appropriate choice of cofinal sequence, one can prove that $\SH^{[\varepsilon, a)}{(W, \lambda)}$ is also independent of the choice of $\varepsilon$. Therefore, we define $\SH^{(0,a)}(W,\lambda)$ as $\SH^{[\varepsilon, a)}{(W, \lambda)}$ for some $\varepsilon<a$. The exact triangle \ref{Comparision: exact triangle for SH+ with e} can be re-written as
\begin{equation}
\label{Comparision: exact triangle for SH+}
\begin{tikzcd}[column sep=small]
& H^*(W) \arrow[dr, "\iota^a"] & \\
\SH^{(0,a)}(W, \lambda)\arrow[ur, "\delta^a"]&
& \SH^{<a}(W, \lambda)\arrow[ll, "\pi^a"]
\end{tikzcd}.
\end{equation}

Next, we would like to claim that this exact triangle gives an exact triangle of persistence modules. For $0<a\le b$, chain-level inclusion map induces map between $\iota^{a,b}_{\SH^+}:\SH^{(0,a)}(W,\lambda)\to\SH^{(0,b)}(W,\lambda)$. For $a\le 0$, set $\SH^{(0,a)}(W,\lambda)=0$ and set $\iota^{a,b}_{\SH^+} = 0$ for any $b$ at least $a$.

\begin{definition}
    The persistence module of $\,\SH^+(W, \lambda)$, denoted by $B_{\SH^+}(W,\lambda)$, is defined to be a persistence module consisting of
    \begin{itemize}
        \item a family of vector space $\set{B_{\SH}(W, \lambda)_a:=\SH^{(0,a)}(W, \lambda)}_{a\in \R},$ and
        \item a family of linear maps $\set{\iota^{a,b}_{\SH^+}: B_{\SH}(W, \lambda)_a \to B_{\SH}(W, \lambda)_b}_{a \le b}$.
    \end{itemize}
\end{definition}

Next, we complete the desired exact triangle with $H^*(W)$. Define barcode of $\SH^0$ consists of $\dim (H^*(W))$-many bars of in forms of $(0,\infty)$. More precisely, define $B_{\SH^0}(W,\lambda)_a =H^*(W)$ for $a > 0$ and zero otherwise. Define $\iota^{a,b}_{\SH^0}$ be an  identity map for $0<a\le b$ and zero otherwise. These data again give a persistence module denoted by $B_{\SH^0}(W, \lambda)$.

Since, exact triangles \ref{Comparision: exact triangle for SH+, Ham vers} for $a,b$ commute with the persistence map, exact triangles \ref{Comparision: exact triangle for SH+} also commute with the persistence map. Therefore, we get the exact triangle of persistence modules:
\begin{equation}
\label{Comparision: exact triangle for SH+, barcode vers}
\begin{tikzcd}[column sep=small]
& B_{\SH^0}(W, \lambda) \arrow[dr, "\iota"] & \\
B_{\SH^+}(W, \lambda)   \arrow[ur, "\delta"]&
& B_{\SH}(W, \lambda)   \arrow[ll, "\pi"]
\end{tikzcd},
\end{equation}
where persistence map for each $a > 0$ is defined as $\delta_i, \iota_a, \pi_a$ and zero-map otherwise.

When the contact boundary has a non-degenerate Reeb flow, $B_{SH^{+}}(W, \lambda)$ is also a persistence module of locally finite type. Therefore, we can define $\SH^+$-barcode entropy $\hbar^{\SH^+}(W, \lambda)$.
\begin{remark}
    In \cite{Ginzburg-Gurel-Mazzucchelli22}, the Barcode of geodesic flows is defined for {\em every Riemannian metric} (not necessarily bumpy). By the Viterbo isomorphism, \cite{Viterbo99, Weber06}, with sufficiently field coefficient, ex) $\mathbb{Z}_2$, filtered symplectic homology and filter loop space homology are isomorphic when the metric is bumpy. Therefore, for every bumpy metric $g$ on $M$, 
    \[\hbar_{\SH^+}(D_{1}^*M, \lambda_{\text{can}})=\hbar(M,g).\]
\end{remark}

\begin{prop}
    For Liouville domain $(W, \lambda)$ with a non-degenerate Reeb flow, $\SH$-barcode entropy and $\SH^+$-barcode entropy agrees,
    \[\hbar^{\SH^+}(W, \lambda) = \hbar^{\SH}(W, \lambda).\]
\end{prop}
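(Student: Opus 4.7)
The goal is to show that the bar count functions $n_\epsilon(\tru(B_{\SH}(W,\lambda), T))$ and $n_\epsilon(\tru(B_{\SH^+}(W,\lambda), T))$ differ by a constant depending only on $\dim H^*(W)$, uniformly in $T$. This yields equality of the exponential growth rates, and after sending $\epsilon \searrow 0$ gives $\hbar^{\SH}(W,\lambda) = \hbar^{\SH^+}(W,\lambda)$. The key input is the exact triangle \ref{Comparision: exact triangle for SH+, barcode vers}, whose third term $B_{\SH^0}(W,\lambda)$ consists of exactly $\dim H^*(W)$ bars, all of the form $(0, \infty)$.

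From the exact triangle at each action level $a$, the morphism $\pi : B_{\SH}(W,\lambda) \to B_{\SH^+}(W,\lambda)$ satisfies $\dim \ker \pi_a \leq \dim H^*(W)$ and $\dim \operatorname{coker} \pi_a \leq \dim H^*(W)$, since $\ker \pi_a = \im \iota_a$ is a quotient of $H^*(W)$ and $\operatorname{coker} \pi_a = \im \delta_a$ sits inside $H^*(W)$. Moreover, viewed as persistence modules, $\ker \pi$ and $\operatorname{coker} \pi$ are respectively a quotient and a sub of $B_{\SH^0}(W,\lambda)$ (the latter up to a degree shift that does not affect the barcode structure), so each carries at most $\dim H^*(W)$ bars in total. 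To lift these bounds to an actual bar-level matching, I work at the chain level with a cofinal family $\{H_k\}$ for the direct limit defining $B_{\SH}(W,\lambda)$: for each $H_k$, the filtered short exact sequence $0 \to \CF^{<\varepsilon}(H_k) \to \CF^{<a}(H_k) \to \CF^{[\varepsilon, a)}(H_k) \to 0$ combined with the singular value decomposition (Lemma \ref{lem svd}) applied to the upper-triangular differential on $\CF^{<a}(H_k) \cong \CF^{<\varepsilon}(H_k) \oplus \CF^{[\varepsilon, a)}(H_k)$ produces a matching between the bars of $B(H_k)$ and the bars of the subcomplex and quotient complex, with at most $\dim \HF^{<\varepsilon}(H_k) = \dim H^*(W)$ unmatched ``crossing'' bars arising from Floer cylinders that traverse the action barrier $\varepsilon$. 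Passing to the direct limit in $k$ preserves this bound, giving the required matching between $B_{\SH}(W,\lambda)$ and $B_{\SH^+}(W,\lambda)$.

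Combining these observations yields the estimate
\[\bigl|\, n_\epsilon\bigl(\tru(B_{\SH}(W,\lambda), T)\bigr) - n_\epsilon\bigl(\tru(B_{\SH^+}(W,\lambda), T)\bigr) \,\bigr| \leq C \dim H^*(W),\]
uniform in $\epsilon, T > 0$, which after dividing by $T$ and letting $T \to \infty$ in Definition \ref{def SH barcode entropy} proves $\hbar^{\SH}_\epsilon(W,\lambda) = \hbar^{\SH^+}_\epsilon(W,\lambda)$ for every $\epsilon > 0$, whence the result after sending $\epsilon \searrow 0$. The main technical obstacle is the chain-level bar-matching step: since the short exact sequences $0 \to \ker \pi \to B_{\SH}(W,\lambda) \to \im \pi \to 0$ and $0 \to \im \pi \to B_{\SH^+}(W,\lambda) \to \operatorname{coker} \pi \to 0$ do not split at the persistence module level, nontrivial extension classes can cause bar endpoints to merge or shift across the sub-quotient boundary, so the pointwise dimension bounds of the previous paragraph do not automatically translate into a controlled bar matching. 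The SVD analysis of the chain-level differential is precisely what handles this mismatch by $O(\dim H^*(W))$ unmatched bars, and its compatibility with direct limits (together with the fact that crossing differentials can only emanate from generators with action $< \varepsilon$) is what makes the resulting bound uniform in $T$.
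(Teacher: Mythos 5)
Your overall strategy is the paper's: use the exact triangle \ref{Comparision: exact triangle for SH+, barcode vers}, observe that the third term $B_{\SH^0}(W,\lambda)$ carries only $\dim H^*(W)$ bars, and conclude that the counts of long bars in $B_{\SH}$ and $B_{\SH^+}$ differ by a controlled amount. The gap is in the central technical step. You claim the \emph{same}-$\epsilon$ bound $|n_\epsilon(\tru(B_{\SH},T)) - n_\epsilon(\tru(B_{\SH^+},T))| \le C\dim H^*(W)$, uniformly in $\epsilon$ and $T$, and deduce $\hbar^{\SH}_\epsilon = \hbar^{\SH^+}_\epsilon$ for every fixed $\epsilon$. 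This is strictly stronger than what is needed, and it is not established by the arguments you give. Pointwise bounds on $\dim\ker\pi_a$ and $\dim\operatorname{coker}\pi_a$ do not control bar counts at a fixed length scale: the modules $\mathbb{k}(0,2]$ and $\mathbb{k}(0,1]\oplus\mathbb{k}(1,2]$ have identical pointwise dimensions but different values of $n_{3/2}$, and the exact sequence $0\to\mathbb{k}[0,1]\to\mathbb{k}[0,2]\to\mathbb{k}(1,2]\to 0$ shows that a bar of length $>\epsilon$ in the middle term can be invisible at scale $\epsilon$ in both the sub and the quotient. Your SVD sketch does not resolve this extension problem: Lemma \ref{lem svd} applied to the block-upper-triangular differential does not by itself yield a bar matching between the total complex and its sub- and quotient complexes, and your parenthetical that crossing differentials ``can only emanate from generators with action $<\varepsilon$'' has the direction reversed --- the differential decreases action, so the crossing components emanate from generators of action $\ge\varepsilon$ and land in $\CF^{<\varepsilon}$. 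The compatibility of the claimed matching with the direct limit over $k$ is also asserted rather than proved.

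None of this is fatal, because the weaker, standard estimate suffices. Theorem \ref{Comparison: comparing the number of bars} gives $n_{2\delta}(V)\le n_\delta(U)+n_\delta(W)$ for an exact sequence of moderate persistence modules; applied to the two truncated sequences obtained by rotating the triangle, it yields $n_{4\varepsilon}(\tru(B_{\SH},T))-\dim H^*(W)\le n_{2\varepsilon}(\tru(B_{\SH^+},T))\le n_{\varepsilon}(\tru(B_{\SH},T))+\dim H^*(W)$ and its mirror. The degradation from $\varepsilon$ to $4\varepsilon$ is harmless because Definition \ref{def SH barcode entropy} takes $\varepsilon\searrow 0$ at the end, so one only needs the sandwich $\hbar^{\SH^+}_{4\varepsilon}\le\hbar^{\SH}_{2\varepsilon}\le\hbar^{\SH^+}_{\varepsilon}$ rather than equality at each scale. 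If you insist on the same-$\epsilon$ statement, you would have to exploit the special shape of $\ker\pi$ and $\operatorname{coker}\pi$ here (quotients, respectively submodules, of $\bigoplus\mathbb{k}(0,\infty)$, so all of their bars touch $0$ or $\infty$) together with an induced-matching argument; that is genuinely more work than the proposition requires.
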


\begin{proof}
Since $B_{\SH^0}(W, \lambda)$ consists of $\dim H^*(W)$-many $(0,\infty)$ bars, $B_{\SH^0}(W, \lambda)$, $B_{\SH^0}(W, \lambda)$ is a persistence module of finite type. Therefore, all the persistence module in the exact triangle \ref{Comparision: exact triangle for SH+, barcode vers} are moderate in the sense of \cite{Buhovsky-Payette-Polterovich-Polterovich-Shelukhin-Stojisavljevic22}. Then, by \cite[Theorem 3.1]{Buhovsky-Payette-Polterovich-Polterovich-Shelukhin-Stojisavljevic22}, we can compare the number of bars $B_{\SH^+}(W, \lambda)$ and $B_{\SH}(W, \lambda)$.

\begin{thm}\cite[Theorem 3.1]{Buhovsky-Payette-Polterovich-Polterovich-Shelukhin-Stojisavljevic22}
\label{Comparison: comparing the number of bars}
    Let $U\to V\to W$ be an exact sequence of moderate persistence modules. Then for every $\delta>0$, the following inequality holds:
    \[b_{2\delta}(V)\le b_{\delta}(U)+ b_{\delta}(W),\]
    where $n_{\varepsilon}(B)$ denotes the number of bars in $B$ longer than $\varepsilon$.
\end{thm}
\begin{proof}
    For proof, see \cite[Chapter 3]{Buhovsky-Payette-Polterovich-Polterovich-Shelukhin-Stojisavljevic22}
\end{proof}

Applying the Theorem to two exact sequences with $T>0$,
\begin{align*}
\tru(B_{\SH^0}(W, \lambda), T)\xrightarrow{\tru(\iota)} \tru(B_{\SH}(W, \lambda),T) \xrightarrow{\tru(\pi)} \tru(B_{\SH^+}(W, \lambda),T),\\
\tru(B_{\SH}(W, \lambda), T)\xrightarrow{\tru(\pi)} \tru(B_{\SH^+}(W, \lambda),T) \xrightarrow{\tru(\delta)} \tru(B_{\SH^0}(W, \lambda),T),
\end{align*}
we get 
\begin{align*}
n_{2\varepsilon}(\tru(B_{\SH},T))\le n_\varepsilon(\tru(B_{\SH^+},T))+n_\varepsilon(\tru(B_{\SH^0},T))\le n_\varepsilon(\tru(B_{\SH^+},T))+\dim (H^*(W))\\
n_{2\varepsilon}(\tru(B_{\SH^+},T))\le n_\varepsilon(\tru(B_{\SH},T))+n_\varepsilon(\tru(B_{\SH^0},T))\le n_\varepsilon(\tru(B_{\SH},T))+\dim (H^*(W)).
\end{align*}
Therefore,
\begin{align*}
n_{4\varepsilon}(\tru(B_{\SH},T))-\dim (H^*(W))
\le  n_{2\varepsilon}(\tru(B_{\SH^+},T))
\le n_\varepsilon(\tru(B_{\SH},T))+\dim (H^*(W)),\\
n_{4\varepsilon}(\tru(B_{\SH^+},T))-\dim (H^*(W)) \le  n_{2\varepsilon}(\tru(B_{\SH},T)) \le n_\varepsilon(\tru(B_{\SH^+},T))+\dim (H^*(W)),
\end{align*}
and
\[ \hbar^{\SH^+}_{4\varepsilon}(W, \lambda) < \hbar^{\SH}_{2\varepsilon}(W, \lambda) \le \hbar^{\SH^+}_{\varepsilon}(W, \lambda), \quad \hbar^{\SH}_{4\varepsilon}(W, \lambda) < \hbar^{\SH^+}_{2\varepsilon}(W, \lambda) \le \hbar^{\SH}_{\varepsilon}(W, \lambda).\]
Taking $\varepsilon\to 0$, we get $\hbar^{\SH^+}(W, \lambda)=\hbar^{\SH}(W, \lambda)$.
\end{proof}
\subsubsection{Comparison with Rabionwitz Floer homology}
Rabinowitz Floer homology is the Floer homology of Rabinowitz action functional $\cA^F: C^\infty(S^1, \widehat{W})\times \R \to \R$ define by
\[\cA^F(u,\eta) := \int_0^1 u^* \lambda - \eta \int_0^1 F(u(t))dt,\]
where $F$ is a function on $\widehat{W}$ whose zero level set $F^{-1}(0)=\partial W$. The critical point of $\cA^F$ are precisely the constant solution on $\partial W$ and the positive/negative Reeb orbits. Then the Rabinowitz Floer homology $\RFC(W,\lambda)$ is the homology of chain complex $\RFC(W, \lambda)$ that is generated by the critical points of $\cA^F$ and with a differential that is given by counting rigid Rabinowitz Floer gradient trajectories connecting critical points. See \cite{Cieliebak-Frauenfelder09, Albers-Frauenfelder10, Cieliebak-Frauenfelder-Oancea10} for more details on the Rabinowitz Floer homology.

Since the differential $\partial_{\RFH}$ decreases the action, filtered Rabinowitz Floer homology is well-defined. Following \cite[Chapter 10]{Meiwes18}, we define non-negative $\RFH$ persistence module $B_{\RFH^{\ge 0}}(W, \lambda)$ with the following data. For $a>0$, let $\RFH^{[0,a)}(W,\lambda)$ be homology of $(\RFC^{[0,a)}(W,\lambda), \partial_{\RFH})$ and $\iota^{a,b}_{\RFH^\ge 0}$ be a map induced by chain level inclusion map.
\begin{definition}
    The persistence module of $\,\RFH^\ge 0(W, \lambda)$, denoted by $B_{\RFH^{\ge 0}}(W,\lambda)$, is defined to be a persistence module consisting of
    \begin{itemize}
        \item a family of vector space $\set{B_{\RFH^{\ge 0}}(W, \lambda)_a:=\RFH^{(0,a)}(W, \lambda)}_{a\in \R}$ and
        \item a family of linear maps $\set{\iota^{a,b}_{\RFH^{\ge 0}}: B_{\RFH^{\ge 0}}(W, \lambda)_a \to B_{\RFH^{\ge 0}}(W, \lambda)_b}_{a \le b}$.
    \end{itemize}
\end{definition}

\begin{thm}\cite[Proposition 1.4.]{Cieliebak-Frauenfelder-Oancea10}
For $a>0$, there is an exact triangle
\begin{equation}
\label{Comparision: exact triangle for RFH}
\begin{tikzcd}[column sep=small]
& H_*(W) \arrow[dr] & \\
\RFH^{[0,a)}(W, \lambda)\arrow[ur]&
& \SH^{<a}(W, \lambda)\arrow[ll]
\end{tikzcd}.
\end{equation}
Moreover, the triangles are compatible with persistence maps $\iota^{a,b}_{\SH}$, $\iota^{a,b}_{\RFH^{\ge 0}}$ for $a<b$.
\end{thm}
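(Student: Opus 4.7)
The plan is to first invoke the exact triangle (\ref{Comparision: exact triangle for RFH}) from \cite{Cieliebak-Frauenfelder-Oancea10} at each action level $a>0$, and then to establish the compatibility claim with respect to the action filtration by tracing through the chain-level source of that triangle. Concretely, one recalls that the triangle arises from a short exact sequence of filtered chain complexes, and then observes that the inclusions of action windows $[0,a)\hookrightarrow [0,b)$ for $a\le b$ induce a morphism of these short exact sequences, from which the required compatibility of the resulting long exact triangles follows by naturality of the zig-zag/snake-lemma connecting homomorphism.

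At the chain level, one filters $\RFC^{[0,a)}(W,\lambda)$ by the type of critical point of the Rabinowitz action functional $\cA^F$: the zero-action stratum consists of constant loops lying on $\partial W$, Morse-perturbed to yield a subcomplex which, after combining with the filling's Morse data via the Viterbo-type map of \cite{Cieliebak-Frauenfelder-Oancea10}, computes $H_*(W)$; the quotient is generated by positive Reeb orbits with action less than $a$ and is identified (in the appropriate direct limit of cofinal Hamiltonians) with the chain complex computing $\SH^{<a}(W,\lambda)$. This short exact sequence induces the triangle of the theorem. Enlarging the action window from $[0,a)$ to $[0,b)$ is realized by a chain-level inclusion that preserves this filtration: the zero-action subcomplex is unaffected (yielding the identity on $H_*(W)$), while the quotient is enlarged by precisely the chain-level inclusion $\CF^{<a}(H)\hookrightarrow \CF^{<b}(H)$, whose induced map on homology is $\iota^{a,b}_{\SH}$ after passing to the direct limit in the cofinal system. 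Naturality of the long exact sequence associated to a morphism of short exact sequences then produces the commuting diagram between the triangles at levels $a$ and $b$.

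The main technical obstacle is to arrange the chain-level data -- the defining function $F$, the Morse perturbation on $\partial W$, the cofinal families of Hamiltonians realizing $\SH^{<\bullet}$, and the families of almost complex structures in $\mathcal{J}_{r_0}^{\mathrm{SFT}}$ -- in a way that simultaneously realizes the Rabinowitz filtered complex, the symplectic filtered complex, and the Morse complex computing $H_*(W)$ as a single coherent short exact sequence that is strictly natural in $a$, rather than merely natural up to quasi-isomorphism. One must also verify that the identifications in \cite{Cieliebak-Frauenfelder-Oancea10} between positive-action Rabinowitz generators and symplectic generators descend to honest chain maps, which in turn hinges on the maximum principle and the energy/action estimates that control Floer trajectories as one enlarges the action window; once these ingredients are in place the persistence-module compatibility follows formally.
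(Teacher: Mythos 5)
Your top-level plan -- quote the exact triangle from Cieliebak--Frauenfelder--Oancea and get the persistence compatibility from chain-level naturality in the action window -- is exactly how the paper handles this statement: the paper gives no independent proof, only the citation together with the observation (following Meiwes, Remark 10.5) that the action-filtered version follows directly from the proof in that reference, since all maps there are induced by inclusions of action windows. So the first paragraph of your proposal is in the same spirit as the paper.

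However, the chain-level mechanism you then propose is not the one in the cited proof, and as stated it does not produce the stated triangle. The action-zero stratum of the Rabinowitz complex is the Morse--Bott critical manifold of constant loops, which is $\partial W$ itself; after perturbation its subcomplex computes $H_*(\partial W)$, not $H_*(W)$, and "combining with the filling's Morse data" is not a subcomplex operation inside $\RFC^{[0,a)}$. Likewise the quotient of $\RFC^{[0,a)}$ by that subcomplex is generated by the positive Reeb orbits and computes the \emph{positive} symplectic homology $\SH^{(0,a)}(W,\lambda)$, not $\SH^{<a}(W,\lambda)$ (which also contains the low-action generators responsible for $H^*(W)$). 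So your short exact sequence of complexes would yield a triangle of the shape $H_*(\partial W)\to \RFH^{[0,a)}\to \SH^{(0,a)}$, with the arrows attached to different pairs of objects than in the theorem, rather than $H_*(W)\to \SH^{<a}\to \RFH^{[0,a)}$. Passing from the former to the latter is precisely the nontrivial content of the cited result: it requires the comparison of the Rabinowitz complex with the Floer complexes computing $\SH$ (the V-shaped Hamiltonian construction and the interpolation/continuation maps of Cieliebak--Frauenfelder--Oancea), in which the $H_*(W)$ term arises from the low-action part of the symplectic-homology side. Once that comparison is in place, your naturality argument for the persistence maps goes through as you describe; but the filtration-by-critical-point-type argument cannot replace it.
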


In fact, in \cite{Cieliebak-Frauenfelder-Oancea10}, the exact triangle is given for $a=\infty$ case. However, as mentioned in \cite[Remark 10.5.]{Meiwes18}, the theorem directly follows from the proof. Define $\RFH^{\ge 0}$-barcode entropy of $(W, \lambda)$ according to definition \ref{def SH barcode entropy} by changing $\SH$ into $\RFH^{\ge 0}$. Appyling the theorem \ref{Comparison: comparing the number of bars} for the exact triangle \ref{Comparision: exact triangle for RFH}, we get the following proposition.
\begin{prop}
Let $(W,\lambda)$ be a Liouville domain with a non-degenerate Reeb flow on the contact boundary. Then, $\SH$-barcode entropy and $\RFH^{\ge 0}$-barcode entropy of $(W, \lambda)$ agrees,
\[\hbar^{\SH}(W,\lambda)=\hbar^{\RFH^{\ge 0}}(W,\lambda).\]
\end{prop}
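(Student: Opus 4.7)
The plan is to mirror the proof just given for the $\SH^+$-barcode entropy, with the exact triangle \eqref{Comparision: exact triangle for RFH} playing the role of \eqref{Comparision: exact triangle for SH+, barcode vers}. First, because the exact triangles for different action levels $a$ commute with the persistence maps $\iota^{a,b}_{\SH}$ and $\iota^{a,b}_{\RFH^{\ge 0}}$, we obtain an exact triangle of persistence modules
\[
\begin{tikzcd}[column sep=small]
& B_{H_*(W)} \ar[dr] & \\
B_{\RFH^{\ge 0}}(W,\lambda) \ar[ur] & & B_{\SH}(W,\lambda) \ar[ll]
\end{tikzcd},
\]
where $B_{H_*(W)}$ is the persistence module equal to the constant family $H_*(W)$ for $a>0$ and zero otherwise, with all persistence maps for $0<a\le b$ being the identity. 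In particular, $B_{H_*(W)}$ is a persistence module of finite type, consisting of $\dim H_*(W)$ bars of the form $(0,\infty)$, exactly as in the $B_{\SH^0}$ case.

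Next, I would verify moderateness. By the non-degeneracy assumption, $B_{\SH}(W,\lambda)$ is a persistence module of locally finite type, and so is $B_{\RFH^{\ge 0}}(W,\lambda)$ (this is standard under non-degeneracy; see \cite{Meiwes18}); $B_{H_*(W)}$ is of finite type. Hence all three terms are moderate in the sense of \cite{Buhovsky-Payette-Polterovich-Polterovich-Shelukhin-Stojisavljevic22}, and Theorem \ref{Comparison: comparing the number of bars} applies to the two rotations of the exact triangle after truncating at $T$:
\begin{align*}
n_{2\varepsilon}\bigl(\tru(B_{\SH},T)\bigr) &\le n_{\varepsilon}\bigl(\tru(B_{H_*(W)},T)\bigr) + n_{\varepsilon}\bigl(\tru(B_{\RFH^{\ge 0}},T)\bigr), \\
n_{2\varepsilon}\bigl(\tru(B_{\RFH^{\ge 0}},T)\bigr) &\le n_{\varepsilon}\bigl(\tru(B_{\SH},T)\bigr) + n_{\varepsilon}\bigl(\tru(B_{H_*(W)},T)\bigr).
\end{align*}
Since $n_{\varepsilon}(\tru(B_{H_*(W)},T)) \le \dim H_*(W)$ is independent of $T$, iterating these inequalities (exactly as in the $\SH^+$ computation) yields, for every $\varepsilon>0$,
\[
\hbar^{\SH}_{4\varepsilon}(W,\lambda) \le \hbar^{\RFH^{\ge 0}}_{2\varepsilon}(W,\lambda) \le \hbar^{\SH}_{\varepsilon}(W,\lambda),\qquad
\hbar^{\RFH^{\ge 0}}_{4\varepsilon}(W,\lambda) \le \hbar^{\SH}_{2\varepsilon}(W,\lambda) \le \hbar^{\RFH^{\ge 0}}_{\varepsilon}(W,\lambda).
\]
Letting $\varepsilon\searrow 0$ gives $\hbar^{\SH}(W,\lambda)=\hbar^{\RFH^{\ge 0}}(W,\lambda)$.

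I do not anticipate a serious obstacle here, since the argument is a direct adaptation of the $\SH^+$ proposition. The only point requiring care is checking that $B_{\RFH^{\ge 0}}(W,\lambda)$ is moderate (locally finite type), which follows from non-degeneracy via the standard fact that for each action window there are only finitely many critical points of $\mathcal{A}^F$ contributing. Once moderateness is established, the rest is purely formal: the exact triangle, Theorem \ref{Comparison: comparing the number of bars}, the boundedness of $n_{\varepsilon}(\tru(B_{H_*(W)},T))$, and the limit $\varepsilon\searrow 0$ combine mechanically to give the equality of the two barcode entropies.
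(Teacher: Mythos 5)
Your argument is correct and is essentially the paper's own proof: the paper likewise forms the exact triangle of persistence modules from \eqref{Comparision: exact triangle for RFH}, applies Theorem \ref{Comparison: comparing the number of bars} to both rotations, uses that the $H_*(W)$ term contributes only a bounded ($T$-independent) number of bars, and lets $\varepsilon\searrow 0$, exactly as in the $\SH^+$ case. No discrepancies to report.
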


\section{SH barcode entropy vs the topological entropy of the Reeb flow}
\label{section SH barcode entropy vs the topological entropy of the Reeb flow}

For a Liouville domain $(W,\omega = d\lambda)$ such that $\alpha:= \lambda|_{\partial W}$ is a non-degenerate contact one-form on $\partial W$, we defined $\hbar^{SH}(W,\lambda)$ in Section \ref{section two barcode entropy}. 
In this subsection, we compare $\hbar^{SH}(W,\lambda)$ to the topological entropy of the Reeb flow of $\alpha$. 

Let $R$ denote the Reeb vector field of $\alpha$ and let $\mu^t$ (resp.\ $\mu$) denote the time $t$ (resp.\ time $1$) flow of $R$. 
We will define the topological entropy in Section \ref{subsection notations for Section 4}. 
Let $h_{top}(\mu)$ denote the topological entropy of $\mu$.
With the notation, we would like to prove the following theorem in Section \ref{section SH barcode entropy vs the topological entropy of the Reeb flow}.
\begin{thm}
	\label{thm vs topological entropy formal}
	In the above setting, the barcode entropy of $\SH(W,\lambda)$ bounds the topological entropy of $\mu$ from below, i.e., 
	\[\hbar^{SH}(W,\lambda) \leq h_{top}(\mu).\]
\end{thm}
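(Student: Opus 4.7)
The plan is to compute $\hbar^{\SH}(W,\lambda)$ via a cofinal sequence of convex radial Hamiltonians and then to bound the number of not-too-short bars by a suitable volume of a Lagrangian graph, via a tomograph and Crofton inequality, and finally to bound this volume by the topological entropy of $\mu$ through Yomdin's theorem. Concretely, I would first invoke Theorem \ref{thm equivalence} to replace $\hbar^{\SH}(W,\lambda)$ by $\hbar(\{H_i\})$, where $H_i = a_i H$ is the rescaling of a single convex radial Hamiltonian $H=h(r)$ of slope $T$ with $a_i T\to \infty$. Fix a small Morse--perturbation datum $(f,g)$ so that the generators of $\CF(H_i+\delta(f+g))$ are either constant solutions in $W$ or pairs $\hat\gamma,\check\gamma$ corresponding to closed Reeb orbits of period $\le a_i T$, and in particular are localized in a fixed compact $W_{r_0}$ for all $i$.

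Next, I would package these generators as intersections of Lagrangians in $(\widehat{W}\times\widehat{W},-\omega\oplus\omega)$: the diagonal $\Delta_{\widehat{W}}$ and the graph $\Gamma_{\phi_{H_i}}$ of the time-$1$ Hamiltonian diffeomorphism. On the cylindrical end $\{r\}\times Y$ the time-$1$ map $\phi_{H_i}$ coincides with the Reeb time-$a_i h^\prime(r)$ map $\mu^{a_i h^\prime(r)}$, so the portion of $\Gamma_{\phi_{H_i}}$ meeting the compact region $K:=W_{r_0}\times W_{r_0}$ is, after restriction to level sets, a union of graphs of $\mu^t$ for $t\in[0,a_iT]$. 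Following \cite{Cineli-Ginzburg-Gurel21, Cineli-Ginzburg-Gurel22a}, I would choose a one-parameter family of Lagrangian perturbations $\{L_s\}_{s\in[0,1]}$ of $\Delta_{\widehat{W}}$ (a Lagrangian tomograph) such that (i) $L_s$ agrees with the diagonal outside $K$, (ii) each $L_s$ is transverse to $\Gamma_{\phi_{H_i}}$ for a.e.\ $s$, and (iii) the number $n_\epsilon(\tru(B(H_i), a_iC))$ of bars of length $>\epsilon$ with left-endpoint $\le a_iC-\epsilon$ is bounded (up to an additive constant coming from $\dim H^*(W)$) by the number of intersection points $\#(L_s \cap \Gamma_{\phi_{H_i}}\cap K)$ for any generic $s$. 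This is the Lagrangian tomograph / persistence argument: one interprets long bars as intersection points that persist under small perturbations and uses that the continuation map induced by the tomograph is bounded in norm by $\epsilon$.

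The third step is the Crofton inequality: integrating over $s\in[0,1]$,
\begin{equation*}
\int_0^1 \#\bigl(L_s \cap \Gamma_{\phi_{H_i}}\cap K\bigr)\,ds \;\le\; \mathrm{Const}(K)\cdot \mathrm{Vol}\bigl(\Gamma_{\phi_{H_i}}\cap K\bigr),
\end{equation*}
so there exists some $s_i$ for which the intersection count is bounded by the same volume. Since $\Gamma_{\phi_{H_i}}\cap K$ decomposes, on the cylindrical part, into the union over $r\in[1,r_0]$ of graphs of $\mu^{a_i h^\prime(r)}$ with $a_i h^\prime(r)\in[0,a_iT]$, the volume is controlled (up to a constant depending only on $h$ and $Y$) by
\begin{equation*}
V_i \;:=\; \sup_{t\in[0,a_iT]} \mathrm{Vol}\bigl(\mathrm{graph}\,\mu^t\bigr)
\end{equation*}
times a polynomial factor in $a_iT$. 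Finally, Yomdin's theorem \cite{Yomdin} gives $\limsup_{t\to\infty} \tfrac{1}{t}\log \mathrm{Vol}(\mathrm{graph}\,\mu^t)\le h_{\mathrm{top}}(\mu)$, and dividing by $a_i T$ and letting $i\to\infty$ and then $\epsilon\searrow 0$ yields $\hbar^{\SH}(W,\lambda)\le h_{\mathrm{top}}(\mu)$.

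The main obstacle I anticipate is step two: constructing the tomograph and verifying that long bars really are controlled by geometric intersection numbers after the perturbation. In the compact case of \cite{Cineli-Ginzburg-Gurel21} this is the technical heart of the argument, relying on a careful analysis of the Floer continuation map induced by a Lagrangian isotopy together with an $\epsilon$-interleaving estimate (the ``long bar $\Rightarrow$ persistent intersection'' principle). Here the added difficulty is that $\widehat{W}$ is non-compact, so I need to ensure that (a) the tomograph stays compactly supported in a fixed region $K$ independent of $i$ (which works because $H_i$ is radial and Floer trajectories obey a maximum principle for SFT-like $J$, confining the relevant intersection data to $W_{r_0}$), and (b) the almost complex structure can be chosen SFT-like at infinity while still being compatible with the tomograph. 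Once these compactness and confinement issues are handled, the remaining Crofton-and-Yomdin portion is essentially identical to \cite{Ginzburg-Gurel-Mazzucchelli22}.
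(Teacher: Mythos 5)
Your overall strategy is exactly the paper's: reduce to $\hbar(\{H_i\})$ via Theorem \ref{thm equivalence}, convert not-too-short bars into intersection points of a perturbed diagonal with $\Gamma_{\phi_{H_i}}$ confined to a fixed compact region, pass to a volume bound by a Crofton-type inequality, and finish with Yomdin. The interleaving step (long bars survive an $\epsilon/2$-perturbation of the Hamiltonian) and the confinement step (bars with left endpoint below $a_iC-\epsilon$ come from orbits localized in $W_{r_0}$, which in the paper is Lemma \ref{lem energy bound}) are both correctly identified, and your alternative of applying Yomdin directly to $\mathrm{graph}\,\mu^t$ fibered over $r\in[1,r_0]$, rather than to the iterates $\phi_{H_1}^n$ on $W_{r_1}$ followed by Bowen's fiberwise computation of $h_{top}(\phi_{H_1}|_{W_{r_1}})=T\,h_{top}(\mu)$ as in Lemma \ref{lem topological entropy}, is an acceptable variant since the extra factor is only polynomial in $a_i$.

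The one step that fails as written is the tomograph: you take a \emph{one-parameter} family $\{L_s\}_{s\in[0,1]}$, but the Crofton inequality
\[\int \#\bigl(L_s\cap\Gamma_{\phi_{H_i}}\cap K\bigr)\,ds\;\le\;\mathrm{Const}(K)\cdot\mathrm{Vol}\bigl(\Gamma_{\phi_{H_i}}\cap K\bigr)\]
requires the evaluation map $\Psi(s,x)=(x,\phi_{-g_s}(x))$ to be a \emph{submersion} onto a neighborhood of the diagonal in $W_{r_1}\times W_{r_1}$, so that one can run the coarea/Fubini argument (bound the integral by the volume of the incidence variety $\Psi^{-1}(\Gamma)$, then slice it over $\Gamma$ with fibers of uniformly bounded volume). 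With a single parameter the union $\bigcup_s L_s$ has codimension $\dim\widehat{W}-1$ in $\widehat{W}\times\widehat{W}$, the fiber $\{s: x\in L_s\}$ over a point of $\Gamma$ is generically empty, and no inequality of the above form holds: a graph of small volume can be arranged to meet every $L_s$ in many points. This is why the paper's Definition \ref{def Lagrangian tomograph} takes $d\ge\dim\widehat{W}$ perturbing Hamiltonians $g_1,\dots,g_d$ whose differentials span $T_p^*\widehat{W}$ at every $p\in W_{r_1}$ (condition (A)), which is exactly what makes $\Psi$ a submersion for $\|s\|<\delta$ and yields Lemma \ref{lem Crofton type inequality}. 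The fix is local and does not disturb the rest of your argument, but as stated the Crofton step is the missing load-bearing hypothesis.
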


In order to prove Theorem \ref{thm vs topological entropy formal}, we fix our setting and set notations in Section \ref{subsection notations for Section 4}. 
Section \ref{subsection sketch of the proof} will give a sketch of the proof of Theorem \ref{thm vs topological entropy formal}.
In the sketch, one can see that we need preparations for the proof, which are done in Sections \ref{subsection Lagrangian tomograph}--\ref{subsection Crofton type inequality}. 
In the last subsection of Section \ref{section SH barcode entropy vs the topological entropy of the Reeb flow}, we will give a full proof of Theorem \ref{thm vs topological entropy formal}.

\subsection{Notations for Section \ref{section SH barcode entropy vs the topological entropy of the Reeb flow}}
\label{subsection notations for Section 4}
Theorem \ref{thm vs topological entropy formal} compares two entropy, one is the barcode entropy of $\SH(W,\lambda)$ and the other is the topological entropy of the Reeb flow. 

To be more precise, let us define the notion of topological entropy first. 
For a compact space $X$ equipped with a metric and an automorphism $\phi:X \to X$, the topological entropy of $\phi$ is defined as follows: 
\begin{definition}
	\label{def topological entropy}
	\mbox{}
	\begin{enumerate}
		\item We set 
		\[\boldsymbol{\Gamma_k(\phi)}:= \left\{\left(x, \phi(x), \phi^2(x), \dots, \phi^{k-1}(X)\right) \in X \times X \times \dots X =:X^k | x \in X\right\}.\]
		\item A {\bf $\boldsymbol{\epsilon}$-cube} in $X^k$ is a product of balls of radius $\epsilon$ from $X$. 
		\item For a set $Y \subset X^k$, $\boldsymbol{\mathrm{Cap}_\epsilon(Y)}$ denotes the minimal number of $\epsilon$-cubes in $X^k$ needed to cover $Y$.
		\item The {\bf topological $\boldsymbol{\epsilon}$-entropy of $\boldsymbol{\phi}$ $\boldsymbol{h_\epsilon(\phi)}$} is defined as 
		\[h_\epsilon(\phi):= \limsup_{k \to \infty} \tfrac{1}{k} \log \mathrm{Cap}_\epsilon\left(\Gamma_k(\phi)\right).\]
		\item The {\bf topological entropy of $\boldsymbol{\phi}$ $h_{top}(\phi)$} is defined as 
		\[h_{top}(\phi):= \lim_{\epsilon \searrow 0} h_\epsilon(\phi).\]
	\end{enumerate}
\end{definition}

\begin{remark}
	We note that the definition of topological entropy in Definition \ref{def topological entropy} follows that of Bowen \cite{Bowen71a} and Dinaburg \cite{Dinaburg70}. 
	There is another definition of topological entropy given by \cite{Adler-Konheim-McAndrew}, which is defined without using a metric structure. 
	We also note that the topological entropy defined in Definition \ref{def topological entropy} is independent of the choice of metric structures on a compact space $X$. 
\end{remark}

We recall that by Theorem \ref{thm equivalence}, $\hbar^{SH}(W,\lambda)$ can be computed as $\hbar(\{H_i\})$ where $\{H_i\}_{i \in \N}$ is a sequence of Hamiltonian functions satisfying conditions described in Section \ref{subsection barcode entropy of a Hamiltonian function}. 
Because of the computational convenience, we fix a specific sequence of Hamiltonian $\{H_i\}_{i \in \N}$ in Section \ref{section SH barcode entropy vs the topological entropy of the Reeb flow}. 

For simplicity, we assume that the fixed sequence $\{H_i\}_{i \in \N}$ satisfies the following condition unless otherwise stated:
\begin{enumerate}
	\item[($\star$)] For any $i \in \N$, $H_i = i \cdot H_1$. 
\end{enumerate}
One can easily check that the assumption ($\star$) holds if and only if there exists a positive number $T$ such that $T$ is not a period of any closed Reeb orbit for all $i \in \N$. 
If there exists a such $T$, then one can choose a convex radial Hamiltonian function $H$ of slope $T$, then set $H_i = i \cdot H$. 
We note that the arguments in below subsections work for any sequence $\{H_i\}$ after a small modification, even if the assumption ($\star$) does not hold. 
Thus, there is no harm in assuming ($\star$).

For the fixed sequence, we use the following notation:
\begin{itemize}
	\item The convex radial Hamiltonian functions are linear outside of $W_{r_0}$ for a constant $r_0$. 
	\item $H_1 = Tr -C$ on $\widehat{W}\setminus W_{r_0}$, or more generally, $H_i = iTr - iC$ for any $i \in \N$.
\end{itemize}

Since we fix a specific sequence, we can simplify the notations defined in Definition \ref{def barcode entropy of a Hamiltonian function} for simplicity. 
Thus, we use the following notations:
\begin{gather}
	\label{eqn new notations}
	\begin{split}
		& b_\epsilon(i) := n_\epsilon\left(\tru\big(B(H_i),iC\big)\right), \\
		& \hbar_\epsilon := \limsup_{i \to \infty} \frac{1}{iT}\log b_\epsilon(i) = \hbar_\epsilon(\{H_i\}), \\
		& \hbar := \lim_{\epsilon \searrow 0} \hbar_\epsilon = \hbar(\{H_i\}).
	\end{split}
\end{gather}

So far, we introduced new notations which simplify the already defined notations.
We define the following two notations which did not appear before. 
\begin{itemize}
	\item We will consider the product space $\widehat{W} \times \widehat{W}$ and the diagonal subspace 
	\[\Delta:= \left\{(x,x) \in \widehat{W} \times \widehat{W} | x \in \widehat{W}\right\}.\]
	\item For a (time-dependent) Hamiltonian function $F: S^1 \times \widehat{W} \to \R$, we choose a Lagrangian submanifold $\Gamma_H$ of $\widehat{W}\times \widehat{W}$ as follows: 
	First we consider the Hamiltonian vector field $X_H$ and its time $1$-flow $\phi_H$.
	Then, $\Gamma_H$ is given as the graph of $\phi_H$, i.e.,
	\[\Gamma_H := \left\{\left(x,\phi_H(x)\right) | x \in \widehat{W}\right\}.\]
\end{itemize}

With the above notations, Lemma \ref{lem intersection points} is easy to prove. 
\begin{lem}
	\label{lem intersection points}
	Let $H$ be a non-degenerate Hamiltonian function on $\widehat{W}$. 
	\begin{enumerate}
		\item $\Delta$ and $\Gamma_H$ intersect transversally. 
		\item Moreover, if $H$ is linear at infinity, then there is a one to one correspondence 
		\[O_H : \mathcal{P}_1(H) \stackrel{\sim}{\to} \Delta \cap \Gamma_H.\]
		We recall that $\mathcal{P}_1(H)$ is the set of $1$-periodic orbits of $X_H$.  
	\end{enumerate}	
\end{lem}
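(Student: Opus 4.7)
The plan is to unwind both statements by direct computation, since the lemma is essentially a book-keeping exercise relating the symplectic-geometric data on $\widehat{W} \times \widehat{W}$ to the dynamical data of $X_H$ on $\widehat{W}$.

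For (1), I would argue pointwise. Fix $(x,x) \in \Delta \cap \Gamma_H$; the condition of being in both submanifolds says $x = \phi_H(x)$, i.e., $x$ is a fixed point of $\phi_H$. The tangent spaces at $(x,x)$ are
\[
T_{(x,x)} \Delta = \{(v,v) : v \in T_x \widehat{W}\}, \qquad T_{(x,x)} \Gamma_H = \{(v, d\phi_H(x)v) : v \in T_x \widehat{W}\}.
\]
Since $\dim \Delta + \dim \Gamma_H = \dim(\widehat{W} \times \widehat{W})$, transversality is equivalent to the two subspaces intersecting only at the origin. A vector in the intersection satisfies $v = d\phi_H(x)v$, i.e., $v \in \ker(d\phi_H(x) - \mathrm{id})$. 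The non-degeneracy hypothesis on $H$ (Definition \ref{def linear at infinity}(2)) is precisely the statement that $d\phi_H(x)$ has no eigenvalue equal to $1$ at any fixed point, which forces $v = 0$ and gives transversality.

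For (2), I would note that the map
\[
O_H : \mathcal{P}_1(H) \longrightarrow \Delta \cap \Gamma_H, \qquad \gamma \longmapsto (\gamma(0), \gamma(0)),
\]
is well-defined and injective from the standard correspondence between $1$-periodic orbits of $X_H$ and fixed points of $\phi_H$. Surjectivity is immediate from the computation in (1): any intersection point is of the form $(x,x)$ with $\phi_H(x) = x$, and such $x$ produces a $1$-periodic orbit $\gamma(t) := \phi_H^t(x)$. The only subtlety is confirming that every fixed point of $\phi_H$ indeed arises from a $1$-periodic orbit, as opposed to lying on the cylindrical end where the flow might formally produce phantom fixed points. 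This is where the linear-at-infinity hypothesis enters.

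The one step that requires a genuine (if brief) argument is ruling out fixed points at infinity. On $\partial W \times [r_0, \infty)$, the Hamiltonian has the form $H = Tr - C$ where $T$ avoids the action spectrum of $(\partial W, \alpha)$. A direct computation from $d\lambda(X_H, \cdot) = -dH$ shows that $X_H = T R_\alpha$ on this region (with zero $\partial_r$-component), so the time-$1$ map there is $(r,p) \mapsto (r, \mu^T(p))$, whose fixed points are exactly points of closed Reeb orbits of period $T$. Since $T$ is not a period of any closed Reeb orbit by assumption, there are no fixed points on $\partial W \times [r_0, \infty)$; all fixed points of $\phi_H$ are therefore genuine $1$-periodic orbits in the interior, and $O_H$ is a bijection. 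I expect this last step about the cylindrical end to be the only place where one needs to invoke more than elementary linear algebra, though even there the calculation is routine.
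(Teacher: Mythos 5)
Your proof is correct and is exactly the standard argument the paper leaves implicit (the paper states the lemma without proof, remarking only that it "is easy to prove"): transversality reduces to $\ker(d\phi_H(x)-\mathrm{id})=0$ at fixed points, which is the non-degeneracy hypothesis, and the bijection is the usual fixed-point/periodic-orbit correspondence, with linearity at infinity giving $X_H=TR_\alpha$ on the end so that the slope condition excludes fixed points there. No gaps.
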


\subsection{Sketch of the proof}
\label{subsection sketch of the proof}
In Section \ref{subsection sketch of the proof}, we roughly explain our strategy of the proof of Theorem \ref{thm vs topological entropy formal}. 
We note that the goal of this subsection is to briefly describe the strategy and we misuse some terms for simplicity. 

First, We recall that $\phi_{H_1}$ means the time $1$ Hamiltonian flow of $H_1$, $\mu^t$ means time $t$ flow of the Reeb vector field, and $T$ means the slope of $H_1$. 
By definition, it is easy to observe that $h_{top}\left(\phi_{H_1}|_{W_{r_1}}\right) = h_{top}\left(\mu^T\right) = T h_{top}\left(\mu\right)$.
We restrict $\phi_{H_1}$ on $W_{r_1}$ since in Definition \ref{def topological entropy}, we only define the topological entropy for automorphisms of compact spaces. 
Then, we compare $\hbar^{SH}(W,\lambda) = \hbar(\{H_i\})$ with $h_{top}(\phi_{H_1})$, instead of $h_{top}(\mu)$.

From classical results, for example \cite{Yomdin}, it is easy to observe that $h_{top}(\phi_{H_1})$ is bounded from below by the exponential volume growth of $\Gamma_{H_n}$ as $n \to \infty$. 
Thus, we care about the volume of $\Gamma_{H_n}$. 
In order to deal with the volume of $\Gamma_{H_n}$, we fix a positive constant $\epsilon$, and we fix a collection of Lagrangians submanifolds
\[\left\{\Delta^s \subset \widehat{W}\times \widehat{W}| s \in B_\delta^d\right\},\]
satisfying the following:
\begin{itemize}
	\item $B_\delta^d$ is a closed $d$-dimensional ball with radius $\delta$,
	\item $\delta$ is a small positive number depending on $\epsilon$, and
	\item for any $s \in B_\delta^d$, $\Delta^s$ is obtained by a perturbation of $\Delta$. 
\end{itemize}
The fixed collection will be called {\em Lagrangian tomograph}.

After fixing a Lagrangian tomograph, by using a {\em Crofton type} inequality, one can show that 
\begin{gather}
	\label{eqn goal 1}
	\text{the exponential growth rate of  } \int_{s \in B^d_\delta} \left|\Delta^s \cap \Gamma_{H_n^s}\right| \leq \text{  the exponential growth rate of the volume of  } \Gamma_{H_n}.
\end{gather}

By \eqref{eqn goal 1}, in order to prove Theorem \ref{thm vs topological entropy formal}, it is enough to prove that 
\begin{gather}
	\label{eqn goal 2}
	b_\epsilon(n) \leq \left|\Delta \cap \Gamma_{H_n^s}\right|.
\end{gather}
More precisely, if \eqref{eqn goal 2} holds, then one can show that the exponential growth of $b_\epsilon(n)$ with $n$ is smaller or equal to the exponential growth of the volume of $\Gamma_{H_n}$.
Let us recall that the later growth rate low-bounds $h_{top}(\phi_{H_1})$.
Thus, we have 
\begin{gather}
	\label{eqn goal 3}
	T \cdot h(\epsilon) \leq h_{top}(\phi_{H_1}) = T \cdot h_{top}(\mu),
\end{gather}
for any $\epsilon$.
One can completes the proof of Theorem \ref{thm vs topological entropy formal} by taking limit of the inequality in \eqref{eqn goal 3} as $\epsilon \searrow 0$.  

For the full proof, we need to fix a Lagrangian tomograph, and we need to prove the Crofton type inequality implying \eqref{eqn goal 2}. 
The former will be done in Section \ref{subsection Lagrangian tomograph} and the later will be done in Section \ref{subsection Crofton type inequality}.

\subsection{Lagrangian tomograph}
\label{subsection Lagrangian tomograph}
Let us start this subsection by recalling/setting notations.

\begin{itemize}
	\item As mentioned before, $\{H_n\}_{n \in \mathbb{N}}$ is a fixed sequence of Hamiltonian functions.  
	\item For any $n \in \N$, $H_n$ is linear at infinity.
	Especially, outside of $W_{r_0}$, $H_n \equiv nTr-nC$ for fixed constants $T$ and $C$.  
	\item We recall that, for any $t >1$, 
	\[W_t = W \cup \left(\partial W \times [1, t]\right) \subset \widehat{W}.\]
	\item We also use the following notation:
	\[r_1 := r_0 + 1, \text{  and  } d \text{  is a fixed constant such that  } d \geq \dim \widehat{W}.\]
	\item Throughout Section \ref{subsection Lagrangian tomograph}, $\epsilon$ is a fixed positive small number. 
\end{itemize}

Let us fix a positive real number $\delta$ depending on $\epsilon$ such that 
\begin{gather}
	\label{eqn condition on delta}
	\delta(r_1 +1 + 3 \delta) < \epsilon \text{  and  }  \delta < \frac{1}{2}.
\end{gather}

Now, we define the following notation:
\begin{definition}
	\label{def Lagrangian tomograph}
	\mbox{}
	\begin{enumerate}
		\item We fix a collection of Hamiltonian functions 
		\[\left\{\boldsymbol{g_i} : \widehat{W} \to \mathbb{R} | i = 1, \dots, d\right\},\]
		such that 
		\begin{enumerate}
			\item[(A)] if $p \in W_{r_1}$, then $\{d_pg_i | i = 1, \dots, d\}$ generates $T_p^*\widehat{W}$,
			\item[(B)] $g_i \equiv 0$ on $\widehat{W} \setminus W_{r_1+\delta}$, and 
			\item[(C)] $g_i$ is sufficiently small, in particular,
			\begin{gather}
				\label{eqn conditions on g_i}
				|g_i| < 1 \text{  on  } \widehat{W}, \left\vert \frac{\partial g_i}{\partial r} \right\vert\ < 1, \left\vert\alpha(X_{g_s})\right\vert < 1 \text{  on  } \partial W \times [1, \infty)_r.
			\end{gather}
		\end{enumerate}
		\item For $s= (s_1, \dots, s_d) \in \mathbb{R}^d$, we define $\boldsymbol{g_s}$ as follows:
		\[g_s := \sum_{i=1}^d s_i g_i.\]
		\item Then, the desired collection of Lagrangians, {\bf Lagrangian tomograph}, is given as 
		\[\left\{\Delta^s:=\Gamma_{-g_s} \subset \widehat{W}\times \widehat{W} | s \in B_\delta^d \right\},\]
		where $B_\delta^d \subset \mathbb{R}^d$ is a closed ball of radius $\delta$ centered at the origin $\mathbf{0}= (0, \dots, 0)$.
	\end{enumerate}
\end{definition}
\begin{remark}
	\label{rmk constrction of Lagrangian tomograph}
	We note that the original idea of constructing Lagrangian tomograph is given in \cite[Section 5.2.3]{Cineli-Ginzburg-Gurel21}, and Definition \ref{def Lagrangian tomograph} (3) is a slight modification of the original idea. 
	The difference between two constructions are the following: 
	The original idea is using Weinstein Neighborhood Theorem \cite{Weinstein71}, but Definition \ref{def Lagrangian tomograph} is using a graph of a Hamiltonian flow. 
	Because of the difference between two constructions, there is a slight difference between \cite[Section 5.2.2]{Cineli-Ginzburg-Gurel21} and Section \ref{subsection Crofton type inequality}.
\end{remark}

In Section \ref{subsection the proof of Theorem vs topological entropy}, we will relate the barcode $B(H_n^s)$ and $\left|\Delta^s \cap \Gamma_{H_n}\right|$, where $H_n^s$ is a Hamiltonian function defined as follows:
\begin{definition}
	\label{def H_n^S}
	We set a Hamiltonian function $\boldsymbol{H_n^s}$ by setting as
	\begin{gather}
		\label{eqn tomograph}
		H_n^s := H_n + g_s \circ \phi_{H_n}^{-t}.
	\end{gather}
\end{definition} 
In the rest of this subsection, we prove two Lemmas \ref{lem Hamiltonian flow} and \ref{lem energy bound} which describe properties of $H_n^s$. 

\begin{lem}
	\label{lem Hamiltonian flow} 
	The Hamiltonian flow of $H_n^s$ is the composition of those of $H_n$ and $g_s$, i.e., 
	\[\phi_{H_n^s}^t = \phi_{H_n}^t \circ \phi_{g_s}^t.\]
\end{lem}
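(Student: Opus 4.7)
The plan is a direct computation: set $F_t := \phi_{H_n}^t \circ \phi_{g_s}^t$ and verify that $F_t$ satisfies the time-dependent Hamiltonian ODE generated by $H_n^s$ with the same initial condition as $\phi_{H_n^s}^t$, so that uniqueness finishes the job.

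Concretely, I would differentiate $F_t(x)$ in $t$ using the product/chain rule. Since both $H_n$ and $g_s$ are autonomous,
\[
\frac{d}{dt}F_t(x) \;=\; X_{H_n}\bigl(F_t(x)\bigr) \;+\; (d\phi_{H_n}^t)_{\phi_{g_s}^t(x)} X_{g_s}\bigl(\phi_{g_s}^t(x)\bigr).
\]
The key identity I would invoke is that the pushforward of a Hamiltonian vector field by a symplectomorphism is again Hamiltonian with the composed function as its generator: for any symplectomorphism $\psi$ and any function $K$, one has $\psi_* X_K = X_{K \circ \psi^{-1}}$. Applying this with $\psi = \phi_{H_n}^t$ and $K = g_s$, and evaluating at the point $F_t(x) = \phi_{H_n}^t(\phi_{g_s}^t(x))$, gives
\[
(d\phi_{H_n}^t)_{\phi_{g_s}^t(x)} X_{g_s}\bigl(\phi_{g_s}^t(x)\bigr) \;=\; X_{g_s \circ \phi_{H_n}^{-t}}\bigl(F_t(x)\bigr).
\]
Combined with the linearity of $K \mapsto X_K$ and the definition $H_n^s(t,\cdot) = H_n + g_s \circ \phi_{H_n}^{-t}$, this yields
\[
\frac{d}{dt}F_t(x) \;=\; X_{H_n + g_s \circ \phi_{H_n}^{-t}}\bigl(F_t(x)\bigr) \;=\; X_{H_n^s}\bigl(t, F_t(x)\bigr).
\]
Since also $F_0 = \mathrm{id} = \phi_{H_n^s}^0$, uniqueness of solutions to the (smooth, locally Lipschitz) Hamiltonian ODE forces $F_t = \phi_{H_n^s}^t$ for all $t$.

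There is no real obstacle: the statement is essentially the standard composition formula for Hamiltonian flows. The only point worth flagging is that, although $H_n$ and $g_s$ are autonomous, the generator $H_n^s$ of the composed flow is genuinely time-dependent through the factor $\phi_{H_n}^{-t}$; this is precisely why Definition \ref{def H_n^S} introduces that composition. I would therefore include a short remark noting this, so that the time-dependent action functional, Floer data, and persistence module $B(H_n^s)$ used in Section \ref{subsection Crofton type inequality} are well-defined.
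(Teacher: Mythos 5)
Your proposal is correct and follows essentially the same route as the paper: both differentiate the composed isotopy $\phi_{H_n}^t\circ\phi_{g_s}^t$ and identify the second term of the derivative as the Hamiltonian vector field of $g_s\circ\phi_{H_n}^{-t}$, the only difference being that you cite the standard naturality identity $\psi_*X_K=X_{K\circ\psi^{-1}}$ while the paper re-derives it inline by pairing with $\omega$. No gaps.
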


\begin{lem}
	\label{lem energy bound}
	Let us assume the following:
	\begin{enumerate}
		\item $s \in \mathbb{R}^d$ satisfies that $\| s \| < \delta$.
		\item $H_n^s$ is a non-degenerate Hamiltonian function, or equivalently, $\Delta$ and $\Gamma_{H_n^s}$ intersect transversally.
		\item An intersection point $(x,x) \in \Delta \cap \Gamma_{H_n^s}$ satisfies that 
		\[\mathcal{A}_{H_n^s}\left(O_{H_n^s}^{-1}(x,x)\right) < nC-\delta (r_1+1+2\delta),\]
	\end{enumerate}
	Then, $x \in W_{r_1 - \delta}$.
\end{lem}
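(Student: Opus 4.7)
The plan is to prove the contrapositive: assume $x \notin W_{r_1-\delta}$ so that $r(x) > r_1-\delta = r_0+1-\delta > r_0$ (using $\delta<1/2$), and derive $\cA_{H_n^s}(\gamma) \geq nC-\delta(r_1+1+2\delta)$, where $\gamma := O_{H_n^s}^{-1}(x,x)$ is the 1-periodic orbit of $X_{H_n^s}$ with $\gamma(0)=x$. By Lemma \ref{lem Hamiltonian flow} I can write $\gamma(t) = \phi_{H_n}^t(\beta(t))$ with $\beta(t) := \phi_{g_s}^t(x)$. Recall that $g_s$ is supported in $W_{r_1+\delta}$. If $r(x) \geq r_1+\delta$, then $\beta \equiv x$ and $\gamma(t) = \phi_{H_n}^t(x)$ is the time-$nT$ Reeb trajectory at level $r(x)$; for $\gamma$ to be 1-periodic, $nT$ would need to be a period of a closed Reeb orbit, contradicting the convex radial slope condition on $H_n$ (and the non-degeneracy of $H_n^s$, since otherwise the fixed-point set would sweep out a full Reeb circle). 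Hence $r_1-\delta < r(x) < r_1+\delta$.

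The main step is an explicit computation of the action in this regime. Differentiating $\gamma(t) = \phi_{H_n}^t(\beta(t))$ yields $\gamma'(t) = X_{H_n}(\gamma(t)) + (d\phi_{H_n}^t)_{\beta(t)}X_{g_s}(\beta(t))$, and since $H_n^s(t,\gamma(t)) = H_n(\gamma(t)) + g_s(\beta(t))$ by definition, the action splits as
\[
\cA_{H_n^s}(\gamma) = \int_0^1 \bigl[\lambda(X_{H_n}(\gamma)) - H_n(\gamma)\bigr]\,dt + \int_0^1 \bigl[\lambda\bigl((d\phi_{H_n}^t)X_{g_s}(\beta)\bigr) - g_s(\beta)\bigr]\,dt.
\]
Choosing $\delta$ small relative to the $C^1$-norms of the $g_i$ ensures $r(\beta(t)) > r_0$ throughout $[0,1]$, and since $\phi_{H_n}^t$ preserves $r$ on $\partial W \times [1,\infty)_r$, also $r(\gamma(t)) = r(\beta(t)) > r_0$. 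On $\{r>r_0\}$ one has $H_n = nTr-nC$, so $\lambda(X_{H_n})-H_n = nC$ pointwise and $\mathcal{L}_{X_{H_n}}\lambda = 0$, the latter giving $(\phi_{H_n}^t)^*\lambda = \lambda$ on this region. Combined with the cylindrical-coordinate identity $\alpha(X_{g_s}) = \partial_r g_s$ (derived from $\iota_{X_{g_s}}d\lambda = -dg_s$ together with $\lambda = r\alpha$), the formula collapses to
\[
\cA_{H_n^s}(\gamma) = nC + \int_0^1 \bigl[r(\beta)\,\partial_r g_s(\beta) - g_s(\beta)\bigr]\,dt.
\]

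To conclude, the integrand vanishes outside $W_{r_1+\delta}$, so where it is nonzero $r(\beta) \leq r_1+\delta$; together with $\norm{s}<\delta$ and condition (C) on the $g_i$ arranged so that $|g_s|,|\partial_r g_s| \leq \delta$, the remainder is bounded in absolute value by $(r_1+\delta)\delta + \delta = \delta(r_1+1+\delta) < \delta(r_1+1+2\delta)$, contradicting the hypothesis and proving $x \in W_{r_1-\delta}$. The main obstacle is the uniform control on $\beta(t)$ needed to keep it inside the linear region $\{r>r_0\}$ throughout $[0,1]$ and to ensure the pointwise bounds on $g_s$ and $\partial_r g_s$ deliver precisely the stated constant; this is handled by shrinking $\delta$ relative to the already-fixed perturbation data $\{g_i\}$, using that the buffer $r_1 - r_0 = 1$ dominates the $O(\delta)$ drift of $\beta$.
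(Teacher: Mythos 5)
Your proof is correct and follows essentially the same route as the paper's: argue by contrapositive, confine the orbit to the linear/support region using the slope condition and the bound $r_1-\delta<r(x)<r_1+\delta$, split the action into the main term $nC$ plus a perturbation term, and bound the latter by $\delta(r_1+1+2\delta)$ using $\|s\|<\delta$ and condition (C). The only difference is cosmetic: you package the correction as $\int_0^1\bigl[r(\beta)\,\partial_r g_s(\beta)-g_s(\beta)\bigr]dt$ via $(\phi_{H_n}^t)^*\lambda=\lambda$ and the identity $\alpha(X_{g_s})=\partial_r g_s$, whereas the paper bounds the two summands $\int r\,\alpha\bigl((\phi_{H_n}^t)_*X_{g_s}\bigr)$ and $-\int g_s\circ\phi_{H_n}^{-t}$ separately using $(\phi_{H_n}^t)^*\alpha=\alpha$; the estimates are identical.
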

We note that $\mathcal{A}_{H_n^s}$ is defined in Definition \ref{def action functional on CF(H)} and $O_{H_n^s}$ is defined in Lemma \ref{lem intersection points}.

Lemma \ref{lem Hamiltonian flow} describes the Hamiltonian flow of $\phi^t_{H_n^s}$, which we need to prove Lemma \ref{lem energy bound}. 
The main Lemma of this subsection, Lemma \ref{lem energy bound}, plays a key role in the proof of Theorem \ref{thm vs topological entropy formal}.

\begin{proof}[Proof of Lemma \ref{lem Hamiltonian flow}]
	We prove a more general statement: If $F, G: \widehat{W} \to \mathbb{R}$ are Hamiltonian functions, and $H$ is defined as 
	\[H := F + g \circ \phi^{-t}_F,\]
	then $\phi_H = \phi_F \circ \phi_G$.
	
	It is enough to show that the derivative of an isotopy $\phi^t_F \circ \phi^t_G$ agrees with the Hamiltonian vector field of $H$.
	In other words, we show that, for any $x \in \widehat{W}$ and $t_0$, 
	\begin{gather}
		\label{eqn goal}
		\omega \left(\tfrac{d}{dt}\bigg\vert_{t=t_0} \left( \phi^t_F \circ \phi^t_G\right), \relbar \right) = dH(\relbar).
	\end{gather}
	
	In order to prove Equation \eqref{eqn goal}, first we observe that
	\begin{equation*}
		\tfrac{d}{dt}\bigg\vert_{t=t_0} \left( \phi^t_F \circ \phi^t_G\right) = X_F\left(\phi^{t_0}_F \circ \phi^{t_0}_G(x)\right)  + (\phi^{t_0}_F)_*\left(X_G\left(\phi^{t_0}_G(x)\right)\right).
	\end{equation*}
	Thus, we have 
	\begin{align*}
		\omega \left(\tfrac{d}{dt}\bigg\vert_{t=t_0} \left( \phi^t_F \circ \phi^t_G\right), \relbar \right) &= \omega\left(X_F\left(\phi^{t_0}_F \circ \phi^{t_0}_G(x)\right), \relbar\right) + \omega\left((\phi^{t_0}_F)_*\left(X_G\left(\phi^{t_0}_G(x)\right)\right), \relbar\right) \\ 
		&= dF_{\phi^{t_0}_F \circ \phi^{t_0}_G(x)}(\relbar) + \left((\phi^{t_0}_F)^* \omega\right)_{\phi^{t_0}_F \circ \phi^{t_0}_G(x)}\left(X_G\left(\phi_G^{t_0}(x)\right), (\phi^{-t_0}_F)_*(\relbar)\right) \\
		&= dF_{\phi^{t_0}_F \circ \phi^{t_0}_G(x)}(\relbar) + \omega_{\phi^{t_0}_G(x)}\left(X_G\left(\phi^{t_0}_G(x)\right), (\phi^{-t_0}_F)_*(\relbar)\right) \\
		&= dF_{\phi^{t_0}_F \circ \phi^{t_0}_G(x)}(\relbar) + dG_{\phi^{t_0}_G(x)}\left((\phi^{-t_0}_F)_*(\relbar)\right) \\
		&= dF_{\phi^{t_0}_F \circ \phi^{t_0}_G(x)}(\relbar) + \left((\phi^{-t_0}_F)^*dG\right)_{\phi^{t_0}_F \circ \phi^{t_0}_G(x)}(\relbar) \\
		&= \left(dF + d(G\circ \phi^{-t_0}_F)\right)_{\phi^{t_0}_F \circ \phi^{t_0}_G(x)} \\
		&=dH.
	\end{align*}
\end{proof}

\begin{proof}[Proof of Lemma \ref{lem energy bound}]
	We prove Lemma \ref{lem energy bound} by contrary, i.e., we show that $\mathcal{A}_{H_n^s}\left(O_{H_n^s}^{-1}(x,x)\right) \geq nC -\delta (r_1+1)$ if $x \notin W_{r_1 -\delta}$. 
	
	Let
	\[p: \mathbb{R} / \mathbb{Z} \to \widehat{W}, p(t):= \phi^t_{H_n^s}(x).\]
	denote the $1$-periodic orbit corresponding to $(x,x)$, i.e., $O_{H_n^s}\left(p(t)\right) = (x,x)$.  
	We are interested in the $r$-coordinate of $p(t)$. 
	We note that $p(t)$ is a flow of the Hamiltonian vector field $X_{H_n^s}$.
	In the proof of Lemma \ref{lem Hamiltonian flow}, we observed that 
	\[p'(t) = X_{H_n^s} = X_{H_n} + (\phi_{H_n}^t)_*(X_{g_s}).\]
	On $\partial W \times [1,\infty)_r$, we can see that the $\partial r$-directional part of $X_{H_n^s}$ is given as 
	\begin{align*}
		dr\left(X_{H_n^s}\right) &= \left(dr \wedge \alpha + r d\alpha\right) \left(-R, X_{H_n^s}\right) \\
		&= \omega(-R, X_{H_n^s}) \\
		&= -\left(d H_n^s\right)(R) \\
		&= - dH_n(R) - dg_s\left((\phi^t_{H_n})_*(R)\right) \\
		&= - dg_s(R) \\
		&= - dr (X_{g_s}). 
	\end{align*}
	Thus, we have $\left\vert dr\left(p'(t) \right)\right \vert < \delta$ for all $t\in [0,1]$, and then the $r$-coordinate of $p(t)$, $r\left(p(t)\right)$, satisfies that 
	\[r\left(p(t)\right) \in \left[r\left(p(0)\right) - \delta, r\left(p(0)\right) + \delta\right].\]
	We would like to point out that \eqref{eqn conditions on g_i} proves the second-last inequality of the above.
	
	Now, we divide $\mathcal{A}_{H_n^s}\left(O_{H_n^s}^{-1}(x,x) =p(t)\right)$ into three terms in \eqref{eqn action computation 1} as follows: 
	\begin{align}
		\notag
		\mathcal{A}_{H_n^s}\left(p(t)\right) &= \int_0^1 \lambda\left(p'(t)\right) dt - \int_0^1 H_n^s\left(p(t)\right) dt \left(\because \text{  by definition}\right) \\
		\notag
		&= \int_0^1 r\left(p(t)\right) \alpha \left(X_{H_n} + (\phi^t_{H_n})_*(X_{g_s})\right)dt - \int_0^1 H_n\left(p(t)\right) dt - \int_0^1 g_s\left(\phi^{-t}_{H_n}\left(p(t)\right)\right)dt \\
		\label{eqn action computation 1} 
		&= \int_0^1\left[ r\left(p(t)\right) \alpha \left(X_{H_n}\right) - H_n\left(p(t)\right)\right] dt + \int_0^1 r\left(p(t)\right) \alpha \left((\phi^t_{H_n})_*(X_{g_s})\right)dt- \int_0^1 g_s\left(\phi^{-t}_{H_n}\left(p(t)\right)\right)dt.
	\end{align}
	
	The first term of \eqref{eqn action computation 1} is easily computed. 
	To be more precise, we would like to point out that since $x \notin W_{r_1 - \delta}$, the $r$-coordinate of $x$ is larger than $r_1 - \delta$. 
	Then, we have 
	\[r_0 < r_1- 2 \delta < r\left(p(t)\right),\]
	because $p(t)$ is the flow of $X_{H_n^s}$ and $p(0)=x$.
	It means that for any $t \in [0,1]$, 
	\[H_n\left(p(t)\right) = nT r\left(p(t)\right) - n C,\]
	because $H_n = nTr -nC$ on $\partial W \times [r_0, \infty)$. 
	
	Now, the first term in \eqref{eqn action computation 1} becomes 
	\begin{align*}
		\int_0^1\left[ r\left(p(t)\right) \alpha \left(X_{H_n}\right) - H_n\left(p(t)\right)\right] dt &= \int_0^1\left[r\left(p(t)\right) \alpha \left(-nTR\right) - nTr\left(p(t)\right) \right]dt +nC \\
		&= nC.
	\end{align*} 
	
	The second term of \eqref{eqn action computation 1} is bounded from below by $- \delta(r_1 +2 \delta)$ because of the following reason:
	First, we note that $r(x) < r_1 + \delta$.
	If not, by the condition (B) of Definition \ref{def Lagrangian tomograph} (1), $H_n^s = H_n$ at $p(t)$ for all $t$. 
	Thus, $p(t)$ is a Reeb orbit of period $nT$.
	Since $nT$ is not a period of a Reeb orbit, it is a contradiction. 
	It implies that
	\[r\left(p(t)\right) \leq r_1 + \delta + \delta =r_1 +2 \delta.\]
	We also note that $\phi_{H_n}^t$ preserves $\alpha$ on $\partial W \times [r_0, \infty)_r$, i.e., $(\phi_{H_n}^t)^*(\alpha) = \alpha$. 
	Thus, the second term in \eqref{eqn action computation 1} satisfies that
	\begin{align*}
		\int_0^1 r\left(p(t)\right) \alpha \left((\phi^t_{H_n})_*(X_{g_s})\right)dt &= \int_0^1 r\left(p(t)\right) (\phi^t_{H_n})^*\alpha (X_{g_s})dt \\
		&= \int_0^1 r\left(p(t)\right) \alpha (X_{g_s}) \\
		& \geq - \delta(r_1 + 2 \delta).
	\end{align*}
	
	Finally, the last term in \eqref{eqn action computation 1} satisfies that 
	\begin{align*}
		- \int_0^1 g_s\left(\phi^{-t}_{H_n}\left(p(t)\right)\right)dt \geq - \max \{|g_s|\} > - \left\| s \right\| > - \delta. 
	\end{align*}
	
	The above arguments about three terms in \eqref{eqn action computation 1} give a lower bound of $\mathcal{A}_{H_n^s}\left(O_{H_n^s}^{-1}(x,x)\right)$ as follows:
	\[\mathcal{A}_{H_n^s}\left(O_{H_n^s}^{-1}(x,x)\right) \geq nC -\delta(r_1+ 2 \delta) - \delta = nC - \delta (r_1+1 +2 \delta).\]
	It completes the proof.
\end{proof}

\subsection{Crofton type inequality}
\label{subsection Crofton type inequality}
In this subsection, we prove \eqref{eqn goal 1} which we call the {\em Crofton type} inequality.  
We note that the main idea of the Crofton type is well-known to experts, as mentioned in \cite[Section 5.2.2]{Cineli-Ginzburg-Gurel21}.
We prove the Crofton type inequality \eqref{eqn goal 1} for completeness of the current paper.
For more general arguments about the idea, see \cite[Section 5.2.2]{Cineli-Ginzburg-Gurel21} and references therein. 

In order to state our Crofton type inequality formally, we need preparations.
First, we fix positive constants $\epsilon$ and $\delta$ satisfying \eqref{eqn condition on delta}.
Then, we have a fixed Lagrangian tomograph $\left\{\Delta^s | s \in B_{\delta}^d\right\}$.

We note that because non-degenerate Hamiltonian functions are generic in the space of Hamiltonian functions, for almost all $s \in B_{\delta}^d$, $\Delta \pitchfork \Gamma_{H_n^s}$. 
Let 
\[N_n(s):= \left\vert\Delta \cap \Gamma_{H_n^s} \cap \left(W_{r_1 - \delta} \times W_{r_1-\delta}\right)\right\vert.\]
Then, $N_n(S)$ is finite for almost all $s \in B_{\delta}^d$, and $N_n(S)$ is an integrable function on $B_{\delta}^d$. 

Let $\Psi$ be a map defined as follows:
\[\Psi: B_{\delta}^d \times \widehat{W} \to \widehat{W} \times \widehat{W}, (s,x) \mapsto \left(x, \phi_{-g_s}(x)\right).\]
We also set 
\[E:= \Psi^{-1}\left(W_{r_1} \times W_{r_1}\right).\] 

We note that $B_{\delta}^d \subset \mathbb{R}^d$.
Thus, the Euclidean metric on $\mathbb{R}^d$ induces a volume form on $B_{\delta}^d$.
Let $ds$ denote the induced volume form on $B_{\delta}^d$. 

We also note that if $(s,x) \in E$, and if $\|s\|$ is sufficiently small, then $\Psi$ is a submersion at $(s,x)$. 
To be more precise, let us mention that 
\[D\Psi|_{(0,x)}(-\partial s_i, 0) = X_{g_i}(x),\]
where $X_{g_i}$ denotes the Hamiltonian vector field of $g_i$.
Because of the condition (A) of Definition \ref{def Lagrangian tomograph} (1), $D\Psi|_{(0,x)}$ is surjective if $x \in W_{r_1}$. 
Since $W_{r_1}$ is a compact set, there is a $\delta >0$ such that 
\begin{gather}
	\label{eqn condition of delta}
	\text{if $s$ satisfies $\|s\|< \delta$, then $\Psi$ is a submersion at $(s,x)$ for all $x \in W_{r_1}$.}
\end{gather} 
\begin{remark}
	\label{rmk difference with CGG}
	A difference between \cite[Section 5.2.2]{Cineli-Ginzburg-Gurel21} and Section \ref{subsection Crofton type inequality} happens when we prove that $\Psi$ is a submersion. 
	The difference arises from the fact that our construction of Lagrangian tomograph is different from that of \cite{Cineli-Ginzburg-Gurel21}.
	See Remark \ref{rmk constrction of Lagrangian tomograph}.
\end{remark}

Now, let us assume that $\delta$ is small enough so that \eqref{eqn condition of delta} holds. 
Then, $\Psi|_E$ is a proper submersion and Ehresmann's fibration Theorem \cite{Ehresmann} proves that $\Psi$ is a locally trivial fibration. 
Thus, there exists a metric $g_E$ on $E$ such that the restriction of $D\Psi$ to the normals to $\Psi^{-1}(x_1, x_2)$ for $(x_1, x_2) \in W_{r_1} \times W_{r_1}$ is an isometry. 

Also, since $\widehat{W}$ is a symplectic manifold, there exists a metric $g$ compatible to $\omega$.
And, $g$ implies the product metric on $\widehat{W} \times \widehat{W}$.

Now, we give the formal statement of the Crofton type inequality. 
\begin{lem}
	\label{lem Crofton type inequality}
	Let $\epsilon$ be a given positive number, and let $\delta$ be a number satisfying \eqref{eqn condition on delta} and \eqref{eqn condition of delta} for the given $\epsilon$.
	Then, there is a constant $A$ depending only on $\Psi, ds$, the fixed metric $g$ on $\widehat{W}$, and the fixed metric $g_E$ on $E$, such that the following inequality holds:
	\[\int_{B_\delta^d} N_n(S) ds \leq A \cdot \mathrm{Vol}\left(\Gamma_{H_n} \cap (W_{r_1} \times W_{r_1})\right),\]
	where $\mathrm{Vol}(\relbar)$ means the volume induced by the metric $g$ on $\widehat{W}$.
\end{lem}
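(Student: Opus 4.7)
My plan is to reduce Lemma~\ref{lem Crofton type inequality} to a double application of the coarea formula on the auxiliary manifold
\[
P := \Psi^{-1}(L), \qquad L := \Gamma_{H_n} \cap (W_{r_1} \times W_{r_1}).
\]
Since $\Psi|_E$ is a proper submersion and $L$ has codimension $\dim \widehat{W}$ in $\widehat{W}\times\widehat{W}$, the set $P$ is a compact submanifold of $E$ of dimension $d$. I will exploit the two canonical projections $\pi_1 : P \to B_\delta^d$ (restriction of the projection to the first factor) and $\pi_2 := \Psi|_P : P \to L$ to pass from $\int N_n(s)\,ds$ to $\mathrm{Vol}(L)$.

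The first step is to identify $|\pi_1^{-1}(s)|$ with a count of intersections. By Lemma~\ref{lem Hamiltonian flow}, a point $(x,x)\in \Delta\cap\Gamma_{H_n^s}$ is a fixed point of $\phi_{H_n}\circ\phi_{g_s}$; setting $y := \phi_{g_s}(x)$ and using $\phi_{g_s}^{-1} = \phi_{-g_s}$ (each $g_i$ is autonomous), the assignment $(x,x)\mapsto (y,x)$ injects $\Delta\cap\Gamma_{H_n^s}$ into $\Delta^s\cap\Gamma_{H_n}$. The $r$-coordinate estimate $|dr(X_{g_s})| < \delta$ used in the proof of Lemma~\ref{lem energy bound} shows $r(y) \le r(x)+\delta < r_1$ whenever $x\in W_{r_1-\delta}$, so the image lies in $L$. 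Rewriting $\pi_1^{-1}(s)$ through $\Psi(s,z) = (z,\phi_{-g_s}(z))$ identifies it bijectively with $\Delta^s\cap L$, hence $|\pi_1^{-1}(s)| \ge N_n(s)$ for almost every $s$, and the coarea formula applied to $\pi_1$ yields
\[
\int_{B_\delta^d} N_n(s)\,ds \;\le\; \int_P |J\pi_1|\,dvol_P \;\le\; C_1 \cdot \mathrm{Vol}(P),
\]
where $C_1$ is an upper bound for the $d$-th power of the operator norm of $D(\mathrm{proj}_{B_\delta^d})$ on $E$ and depends only on $g_E$ and $ds$.

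To bound $\mathrm{Vol}(P)$ I apply the coarea formula to $\pi_2$. The crucial point is that the metric $g_E$ was chosen precisely so that $D\Psi$ is an isometry on horizontals; at any $p\in P$ the horizontal part of $T_pP$ therefore projects isometrically onto $T_{\pi_2(p)}L$, which forces $|J\pi_2|\equiv 1$. Consequently
\[
\mathrm{Vol}(P) = \int_P |J\pi_2|\,dvol_P = \int_L \mathrm{Vol}(\pi_2^{-1}(l))\,dvol_L \;\le\; M\cdot \mathrm{Vol}(L),
\]
where $M := \sup_{q\in W_{r_1}\times W_{r_1}} \mathrm{Vol}(\Psi^{-1}(q))$ is finite by compactness of $E$ and the local triviality of $\Psi|_E$, and depends only on $\Psi$ and $g_E$. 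Taking $A := C_1 M$ assembles the two inequalities into the desired bound.

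The main subtlety will be the first step: identifying the correspondence between intersection points and simultaneously controlling their location in $W_{r_1}\times W_{r_1}$, which rests on the smallness of $g_s$ both in sup-norm and along the Reeb direction. After that, the argument is textbook integral geometry: the chosen metric $g_E$ makes $\pi_2$ a Riemannian submersion with unit Jacobian, and all remaining constants ($C_1$, $M$) are invariants of the fixed data $(\Psi, ds, g, g_E)$, independent of $n$.
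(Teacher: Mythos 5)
Your argument is correct and is essentially the paper's proof: you pass from $\Delta\cap\Gamma_{H_n^s}$ to $\Delta^s\cap\Gamma_{H_n}\cap(W_{r_1}\times W_{r_1})$ via $(x,x)\mapsto(\phi_{g_s}(x),x)$ exactly as the paper does with its isotopy $\Phi_s$, then integrate over $B_\delta^d$ via the projection and apply Fubini/coarea along $\Psi$ using that $g_E$ makes the horizontal differential an isometry, with all constants controlled by compactness of $E$. The only cosmetic difference is that you run both coarea estimates in the metric $g_E$ and absorb the Jacobian bound for the projection into $C_1$, whereas the paper uses the product metric first and then compares it to $g_E$ by compactness; this is just a repackaging of the same constants.
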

\begin{proof}
	For a given $s \in B_{\delta}^d$, we set a Hamiltonian isotopy $\Phi_s: \widehat{W} \times \widehat{W} \to \widehat{W} \times \widehat{W}$ as 
	\[\Phi_s(x,y) = (\phi_{g_s}(x),y).\]
	It is easy to check that $\Phi_s$ is a Hamiltonian isotopy corresponding to the following Hamiltonian function: 
	\[\widehat{W} \times \widehat{W} \to \mathbb{R}, (x,y) \mapsto g_s(x).\]
	
	Let assume that $x \in \widehat{W}$ satisfies that 
	\[(x,x) \in \Delta \cap \Gamma_{H_n^s} \cap \left(W_{r_1 -\delta} \times W_{r_1 -\delta}\right).\]
	The first step of the proof is to show that 
	\[\Phi_s(x,x) \in \Gamma_{-g_s} \cap \Gamma_{H_n} \cap \left(W_{r_1} \times W_{r_1}\right).\]
	
	By definition, $\Phi_s(x,x) = \left(\phi_{g_s}(x), x\right)$. 
	Let $y := \phi_{g_s}(x)$. 
	Then, one can easily see that 
	\[\phi_{-g_s}(y)=\phi_{g_s}^{-1}(y)=x= \phi_{H_n^s}(x) = \left(\phi_{H_n} \circ \phi_{g_s}\right)(x) = \phi_{H_n}(y).\]
	The first and second equality are trivial by definitions, the third equality holds because of $(x,x) \in \Gamma_{H_n^s}$, the fourth equality holds because of Lemma \ref{lem Hamiltonian flow}, and the last equality is trivial. 
	Thus, $\Phi(x,x) \in \Gamma_{-g_s} \cap \Gamma_{H_n}$.
	
	Moreover, because $\left\| s \right\| < \delta$ and because of the condition (A) of Definition \ref{def Lagrangian tomograph}. (1),  
	\[r\text{-coordinate of  } \phi_{g_s}(x) \leq r\text{-coordinate of  } x + \delta.\]
	Since $x \in W_{r_1- \delta}$,	$y=\phi_{g_s}(x) \in W_{r_1}$.
	
	Since $\Phi_s$ is an isotopy, we have 
	\[N_n(S) \leq \left|\Gamma_{-g_s} \cap \Gamma_{H_n} \cap \left(W_{r_1} \times W_{r_1}\right)\right|=:N'(s).\]
	Now, it is enough to prove that 
	\begin{gather}
		\label{eqn Crofton 0}
		\int_{B_{\delta}^d} N'(s)ds \leq A \cdot \mathrm{Vol}\left(\Gamma_{H_n} \cap (W_{r_1} \times W_{r_1})\right).
	\end{gather}
	
	We recall that $B_{\delta}^d$ (resp.\ $\widehat{W}$) carries a metric. 
	Thus, the product space $B_{\delta}^d \times \widehat{W}$ and its subset $E$ carry the product metric. 
	We note that the product metric does not need to be the same as $g_E$. 
	
	We consider the map $\pi: E \hookrightarrow B^d_\delta \times \widehat{W} \to B^d_\delta$, projecting to the first factor. 
	And, for simplicity, let $\Sigma:= \Psi^{-1}\left(\Gamma_{H_n} \cap (W_{r_1} \times W_{r_1})\right)$ and let $\mathrm{Vol}_1(\relbar)$ denote the volume with respect to the product metric on $E$.  
	Then, we have 
	\begin{gather}
		\label{eqn Crofton 1}
		\int_{B_{\delta}^d} N'(s) ds = \int_{B_{\delta}^d}  \left|\left(s \times \widehat{W}\right) \cap \Sigma \right|ds = \int_{\Sigma}\pi^* ds \leq \mathrm{Vol}_1(\Sigma).
	\end{gather}
	
	Now, let $\mathrm{Vol}_2(\relbar)$ denote the volume function on the space of subsets of $E$ with respect to the metric$g_E$.
	Then, Fubini theorem implies that 
	\begin{gather}
		\label{eqn Crofton 2}
		\mathrm{Vol}(\Sigma) = \int_{\Gamma_{H_n} \cap (W_{r_1} \times W_{r_1})} \mathrm{Vol}_2\left(\Psi^{-1}(x)\right)dx \leq \max_{x \in \psi(E)} \mathrm{Vol}_2\left(\Psi^{-1}(x)\right) \cdot \mathrm{Vol}\left(\Gamma_{H_n} \cap (W_{r_1} \times W_{r_1})\right).
	\end{gather}
	
	Finally, we note that $E$ is a compact set. 
	Thus, there is a constant $A_0$ depending only on $g_E$ and the product metric on $E$ such that 
	\begin{gather}
		\label{eqn Crofton 3}
		\mathrm{Vol}_1 (\Sigma) \leq A_0 \cdot \mathrm{Vol}_2(\Sigma).
	\end{gather}
	
	Now, we combine \eqref{eqn Crofton 1}--\eqref{eqn Crofton 3} and we have 
	\[\int_{B_{\delta}^d} N'(s) ds \leq \mathrm{Vol}_1(\Sigma) \leq A_0 \cdot \mathrm{Vol}_2(\Sigma) \leq A_0 \cdot \max_{x \in \psi(E)} \mathrm{Vol}_2\left(\Psi^{-1}(x)\right) \cdot \mathrm{Vol}\left(\Gamma_{H_n} \cap (W_{r_1} \times W_{r_1})\right).\]
	It proves \eqref{eqn Crofton 0} and completes the proof.
\end{proof}

\subsection{The proof of Theorem \ref{thm vs topological entropy formal}}
\label{subsection the proof of Theorem vs topological entropy} 
In this subsection, we prove Theorem \ref{thm vs topological entropy}, or equivalently, Theorem \ref{thm vs topological entropy formal}. 
Theorem \ref{thm vs topological entropy formal} compares $\hbar^{SH}(W, \lambda)$ and $h_{top}(\mu)$ where $\mu$ denotes the time $1$ Reeb flow.
As mentioned in Section \ref{subsection sketch of the proof}, our strategy is to show Lemma \ref{lem topological entropy}, i.e., that $h_{top}(\phi_{H_1}|_{W_1})$ and $h_{top}(\mu^T)$ agree to each other first, then to compare $\hbar^{SH}(W, \lambda)$ and $h_{top}(\phi_{H_1}|_{W_1})$. 

\begin{lem}
	\label{lem topological entropy}
	In the above setting, the Hamiltonian flow $\phi_{H_1}|_{W_{r_1}}$ and the Reeb flow $\mu$ have the same topological entropy, i.e., 
	\[h_{top}\left(\phi_{H_1}|_{W_1}\right) = T\cdot h_{top}(\mu).\]
\end{lem}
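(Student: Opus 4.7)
The proof will split the identity $h_{top}(\phi_{H_1}|_{W_{r_1}}) = T \cdot h_{top}(\mu)$ into matching lower and upper bounds. First, I would record the basic structural fact: since $H_1 \equiv h(r)$ on the cylindrical end $\partial W \times [1,\infty)_r$, the Hamiltonian vector field is $X_{H_1} = h'(r) R_\alpha$, so $\phi_{H_1}$ preserves each level $\{r\} \times Y$ and restricts there to $\mu^{h'(r)}$. Concretely, $\phi_{H_1}|_W = \mathrm{id}$; on the linear region $\partial W \times [r_0, r_1]$ it is $(r,y) \mapsto (r, \mu^T(y))$; and on the transition region $\partial W \times [1, r_0]$ it is $(r,y) \mapsto (r, \mu^{h'(r)}(y))$ with $0 \leq h'(r) \leq T$.

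For the lower bound, the slice $\{r_0\} \times Y$ is a closed $\phi_{H_1}$-invariant subset of $W_{r_1}$ on which $\phi_{H_1}$ restricts to $\mu^T$. Monotonicity of Bowen entropy under restriction to closed invariant subsets then gives
\[ h_{top}(\phi_{H_1}|_{W_{r_1}}) \;\geq\; h_{top}(\mu^T) \;=\; T\,h_{top}(\mu), \]
where the last equality is the classical rescaling property for time-$t$ maps of smooth flows.

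For the upper bound, I would decompose $W_{r_1}$ as a union of three closed $\phi_{H_1}$-invariant regions $W$, $\partial W \times [1, r_0]$ and $\partial W \times [r_0, r_1]$, and apply the standard identity $h_{top}(f|_{A \cup B}) = \max(h_{top}(f|_A), h_{top}(f|_B))$ for closed invariant $A, B$ (immediate from subadditivity of $(n,\epsilon)$-spanning numbers). The restriction to $W$ is the identity, with entropy $0$. On $\partial W \times [r_0, r_1]$ the map is the direct product $\mathrm{id}_{[r_0, r_1]} \times \mu^T$, whose topological entropy equals $0 + T \cdot h_{top}(\mu)$.

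The main obstacle is the upper bound $h_{top}(\phi_{H_1}|_{\partial W \times [1,r_0]}) \leq T\,h_{top}(\mu)$, where the fibrewise rate $h'(r)$ varies continuously with $r$. I would construct an $(n,\epsilon)$-spanning set explicitly: choose $\eta = \eta(\epsilon) > 0$ so that a Reeb time-shift of length $\eta$ moves every point of $Y$ by less than $\epsilon/4$ (possible by compactness of $Y$ and boundedness of $R_\alpha$); partition $[1, r_0]$ into $N = O(n)$ subintervals $I_j$ of length at most $\eta/(n\|h''\|_\infty)$, so that $|h'(r) - h'(r_j)| < \eta/n$ for $r \in I_j$; on each fibre $\{r_j\} \times Y$ take an $(n, \epsilon/2)$-spanning set for $\mu^{h'(r_j)}$, whose cardinality is bounded by $\exp(n\, h'(r_j) h_{top}(\mu) + o(n)) \leq \exp(nT\,h_{top}(\mu) + o(n))$. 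Uniform continuity of the Reeb flow in time then forces the resulting collection to $(n, \epsilon)$-span the full region, giving a spanning number of size $O(n) \cdot \exp(nT\,h_{top}(\mu) + o(n))$; taking logarithms, dividing by $n$, and letting $\epsilon \to 0$ yields the desired bound. The delicate point is tuning the $r$-partition finely enough to absorb the $n$-fold amplification of $|h'(r) - h'(r_j)|$ under iteration, while keeping the number of subintervals only polynomial in $n$ so that it contributes nothing to the exponential growth rate.
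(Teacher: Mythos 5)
Your proposal is correct, and the lower bound and the reduction to the cylindrical region via the max-over-closed-invariant-pieces identity match the paper. Where you genuinely diverge is the treatment of the transition region $\partial W\times[1,r_0]$, where the fibrewise rate $h'(r)$ varies: the paper disposes of the entire cylindrical part in one stroke by citing Bowen's fibered-entropy theorem (Corollary 18 of \cite{Bowen71a}) applied to the projection $\pi:[1,r_1]\times\partial W\to[1,r_1]$, which intertwines $\phi_{H_1}$ with the identity; since each fibre restriction is $\mu^{h'(r)}$, the entropy is $\sup_r h'(r)\,h_{top}(\mu)=T\,h_{top}(\mu)$ by Bowen's Proposition 21. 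You instead prove this special case by hand with an explicit $(n,\epsilon)$-spanning set, partitioning $[1,r_0]$ into $O(n)$ subintervals fine enough that the $n$-fold amplification of $|h'(r)-h'(r_j)|$ stays below the resolution scale. The paper's route is shorter and relies on a classical black box; yours is self-contained and makes the mechanism visible. The one point you should pin down is uniformity over the $O(n)$ fibres of the $o(n)$ error in the spanning-number bounds $r_n(\epsilon/2,\mu^{h'(r_j)})$, since the rates $h'(r_j)$ vary with $j$ and with $n$; this is cleanly handled by deriving all fibre spanning sets from a single $(\lceil nT\rceil+1,\epsilon')$-spanning set for the time-one map $\mu$ via the standard reparametrization argument, after which the count is uniform in $j$ and the linear prefactor $O(n)$ contributes nothing to the exponential growth rate.
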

\begin{proof}
	Before starting the proof, we would like to point out that for convenience, every restriction of $\phi_{H_1}$ on a subset of $\widehat{W}$ will be denoted by $\phi_{H_1}$ without mentioning the domain of the restriction if there is no chance of confusion.
	
	We recall that $H_1$ is convex radial.
	Thus, we have 
	\[H_1 = \begin{cases}
		0 \text{  on  } W \\ 
		h(r) \text{  on  } \partial W \times [1,\infty)_r
	\end{cases}.\]
	Thus, $\phi_{H_1}|_W$ is the identity map, and $\phi_{H_1}|_{[1,\infty) \times \partial W}$ is a flow of $h'(r) R$. 
	
	We note that the following facts about topological entropy are well-known:
	\begin{itemize}
		\item If $\phi: X_1 \cup X_2 \to X_1 \cup X_2$ satisfies that $\phi(X_i) \subset X_i$, then 
		\[h_{top}\left(\phi\right) = \max\{h_{top}\left(X_1\right), h_{top}\left(X_2\right)\}.\]
		\item If $\phi:X \to X$ and $Y \subset X$ satisfy that $\phi(Y)$, then
		\[h_{top}\left(\phi\right) \geq h_{top}\left(\phi|_Y\right).\]
	\end{itemize}
	See \cite[Section 1.6]{Gromov2} for more details.   
	
	From the above listed properties of the topological entropy, it is easy to check that  
	\[h_{top}\left(\phi_{H_1}|_{W_{r_1}}\right) = \max\left\{h_{top}\left(\phi_{H_1}|_W\right), h_{top}\left(\phi_{H_1}|_{[0,r_1] \times \partial W}\right)\right\}.\]
	Since every identity map has the zero topological entropy, we have 
	\[h_{top}\left(\phi_{H_1}|_{W_{r_1}}\right) = h_{top}\left(\phi_{H_1}|_{[0,r_1] \times \partial W}\right).\]
	Moreover, 
	\begin{gather}
		\label{eqn topological entropy 1}
		h_{top}\left(\phi_{H_1}|_{[0,r_1] \times \partial W}\right) \geq h_{top}\left(\phi_{H_1}|_{\{r_1\} \times \partial W}\right) = h_{top}\left(\mu^T\right).
	\end{gather}
	
	Now, we recall two classical results from \cite{Bowen71a}.
	\begin{itemize}
		\item \cite[Corollary 18]{Bowen71a}. Let $X$ and $Y$ be two compact subsets. 
		For a pair of continuous maps $\phi:X \to X$ and $\pi: X \to Y$ such that $\pi \circ \phi = \pi$, the following holds:
		\[h_{top}\left(\phi\right) = \sup_{y \in Y} h_{top}\left(\phi|_{\pi^{-1}(y)}\right).\]
		\item \cite[Proposition 21]{Bowen71a}. If $\phi^t$ is a time $t$ flow of a vector field on a compact space $X$ equipped with a metric structure, for any $t >0$, 
		\[h_{top}\left(\phi^t\right) = t h_{top}\left(\phi^1\right).\]  
	\end{itemize}
	
	Now, we consider the map 
	\[\pi: [1,r_1] \times \partial W \to [1,r_1].\]
	It is easy to check that $\pi \circ \phi_{H_1} = \pi$ and  $\phi_{H_1}|_{\pi^{-1}(r)} = \mu^{h'(r)}$ for all $r \in [1,r_1]$. 
	Thus, from \cite[Vorollary 18 and Proposition 21]{Bowen71a}, one can conclude that 
	\[h_{top}\left(\phi_{H_1}\right) := \sup_{r \in [1,r_1]} \left\{h_{top}\left(\phi_{H_1}|_{\{r\} \times \partial Y}\right) = h'(r) h_{top}\left(\mu\right) | r \in [1,r_1]\right\}.\]
\end{proof}

Now, we prove Theorem \ref{thm vs topological entropy formal}
\begin{proof}[Proof of Theorem \ref{thm vs topological entropy formal}]
	We start the proof by pointing out that the following equality is well-known: 
	\begin{gather}
		\label{eqn proof 3}
		\lim_{n \to \infty} \frac{1}{n} \log \mathrm{Vol}\left(\left\{\left(x,\phi_{H_1}^n(x)\right) | x \in W_{r_1} \right\}\right) \leq h_{top}\left(\phi_{H_1}|_{W_{r_1}}\right)
	\end{gather}
	By the definition of $\Gamma_{H_n}$, 
	\[\mathrm{Vol}\left(\left\{\left(x,\phi_{H_1}^n(x)\right) | x \in W_{r_1} \right\}\right) = \mathrm{Vol}\left(\Gamma_{H_n} \cap (W_{r_1} \times W_{r_1})\right).\]
	Because of Lemma \ref{lem topological entropy}, the inequality in \eqref{eqn proof 3} becomes 
	\begin{gather}
		\label{eqn proof 1}
		\lim_{n \to \infty} \frac{1}{n} \log \mathrm{Vol}\left(\Gamma_{H_n} \cap (W_{r_1} \times W_{r_1})\right) \leq T h_{top}(\mu).
	\end{gather} 
	
	Now, from \eqref{eqn proof 3} and Lemma \ref{lem Crofton type inequality}, we have
	\begin{gather}
		\label{eqn proof 4}
		\limsup_{n \to \infty} \frac{1}{nT} \int_{B^d_\delta} N_n(S) ds \leq h_{top}(\mu). 
	\end{gather}
	In order to prove Theorem \ref{thm vs topological entropy formal}, we relate the left-handed side of \eqref{eqn proof 4} with $b_\epsilon(n), \hbar_{\epsilon}$, and $\hbar$ defined in \eqref{eqn new notations}.
	
	Let $\epsilon$ be a fixed positive small real number. 
	Then, we will fix a positive real number $\delta$ such that
	\begin{enumerate}
		\item[(i)] the inequalities in \eqref{eqn condition on delta} hold, i.e., 	$\delta(r_1 +1 + 3 \delta) < \epsilon$ and $\delta < \frac{1}{2}$, and
		\item[(ii)] if $\left\|s \right\| \leq \delta$, then $\|H_n - H_n^s\|_{\infty, W_{r_1}}$ is sufficiently small so that the interleaving distance between $B(H_n)$ and $B(H_n^s)$ is less than $\frac{\epsilon}{2}$ by Lemma \ref{lem sup norm}.
	\end{enumerate} 
	We note that since $\delta$ is small enough to satisfy (i) and (ii), one can apply the results from Sections \ref{subsection Lagrangian tomograph} and \ref{subsection Crofton type inequality}, especially, Lemmas \ref{lem energy bound} and \ref{lem Crofton type inequality}.
	
	As we did before, let $B_\delta^d \subset \mathbb{R}^d$ be a closed ball of radius $\delta$ so that all $s \in B_\delta^d$ satisfies that $\left\|s \right\| \leq \delta$.
	
	Now, we recall that $b_{2\epsilon}(n)$ is defined as the number of bars in $B(H_n)$ satisfying conditions (A-0) and (B-0) below. 
	\begin{enumerate}
		\item[(A-0)] The length of each of the bars is larger than $2\epsilon$. 
		\item[(B-0)] The left end point of each of the bars is smaller than $nC-2\epsilon$.
	\end{enumerate}
	We note that the number of bars in $B(H_n)$ satisfying (A-0) and (B-0) is equal to $b_{2\epsilon}(n)$.
	For the second condition (B-0), see Remark \ref{rmk independency on the sequence} (2).
	
	We weaken the conditions (A-0) and (B-0) as follows: 
	\begin{enumerate}
		\item[(A-1)] The length of each of the bars is larger than to $2\epsilon$.
		\item[(B-1)] The left end point of each of the bars is smaller than $nC-2\epsilon + \delta$.
	\end{enumerate}
	Let $b_1$ be the number of bars in $B(H_n)$ satisfying (A-1) and (B-1), then 
	\[b_{2\epsilon}(n) \leq b_1,\]
	since (A-1) and (B-1) are weaker than (A-0) and (B-0).
	
	Now, we count the number of bars in $B(H_n^s)$ under the assumption that $\Delta \pitchfork \Gamma_{H_n^s}$. 
	We note that the assumption holds for almost all $s \in B_\delta^d$ 
	Let $b_2$ be the number of bars in $B(H_n^s)$ satisfying (A-2) and (B-2) stated below. 
	\begin{enumerate}
		\item[(A-2)] The lengths of each of the bars is larger than to $\epsilon$.
		\item[(B-2)] The left end point of each of the bars is smaller than $nC-2\epsilon +\delta +\tfrac{\epsilon}{2}$.
	\end{enumerate}
	Then, since $B(H_n)$ and $B(H_n^s)$ are $\frac{\epsilon}{2}$-interleaved, or equivalently, $d_{bot}(B(H_n^s),B(H_n)) <\frac{\epsilon}{2}$, we have 
	\[b_{2\epsilon}(n) \leq b_1 \leq b_2.\]
	
	Again, we can weaken the condition (B-2) as follows: 
	\begin{enumerate}
		\item[(A-3)] The length of a bar is larger than to $\epsilon$.
		\item[(B-3)] The left end point of a bar is smaller than $nC-\epsilon +\delta$.
	\end{enumerate}
	We note that (A-3) is the same as (A-2). 
	Then, let $b_3$ be the number of bars in $B(H_n^s)$ satisfying (A-3) and (B-3). 
	Since (B-2) is stronger than (B-3), 
	\[b_{2\epsilon}(n) \leq b_1 \leq b_2 \leq b_3.\]
	
	We recall that by Lemma \ref{lem svd}, the differential map $\partial: \left(CF(H_n^s), \mathcal{A}_{H_n^s}\right) \to \left(CF(H_n^s), \mathcal{A}_{H_n^s}\right)$ admits a singular value decomposition. 
	Or equivalently, there is an orthogonal basis 
	\[\mathbb{B}:= \{x_1, \dots, x_N, y_1, \dots, y_M, z_1, \dots, z_M\}\]
	of the non-Archimedean normed vector space $\left(CF(H_n^s), \mathcal{A}_{H_n^s}\right)$ such that 
	\begin{gather*}
		\partial x_i = 0 \text{  for all  } i = 1, \dots, N, \partial z_j = y_j \text{  for all  } j = 1, \dots, M, \\
		\mathcal{A}_{H_n^s}(z_1) - \mathcal{A}_{H_n^s}(y_1) \leq \mathcal{A}_{H_n^s}(z_2) - \mathcal{A}_{H_n^s}(y_2) \leq \dots \leq \mathcal{A}_{H_n^s}(z_M) - \mathcal{A}_{H_n^s}(y_M).
	\end{gather*}
	
	We note that $x_i$ (resp.\ the pair $(y_j, z_j)$) corresponds to a bar in $B(H_n^s)$ having the infinite length (resp.\ length $\mathcal{A}_{H_n^s}(z_j) - \mathcal{A}_{H_n^s}(y_j)$) and the left end point of the corresponding bar is $\mathcal{A}_{H_n^s}(x_i)$ (resp.\ $\mathcal{A}_{H_n^s}(y_j)$.)
	Thus, $b_3$ is the number of $i \in \{1, \dots, N\}$ and $j \in \{1, \dots, M\}$ satisfying conditions (A-3') and (B-3').
	\begin{enumerate}
		\item[(A-3')] $\mathcal{A}_{H_n^s}(z_j) - \mathcal{A}_{H_n^s}(y_j) > \epsilon$.
		\item[(B-3')] $\mathcal{A}_{H_n^s}(x_i), \mathcal{A}_{H_n^s}(y_j) < nC-\epsilon +\delta$.
	\end{enumerate}
	
	For the next step, we forget the condition (A-3') and weaken (B-3') as follows:
	Since $\delta (r_1 +1 + 3 \delta) < \epsilon$, we have 
	\[nC- \epsilon + \delta < nC - \delta (r_1 +1 + 2 \delta).\]
	Thus, the (B-4) below is weaker than (B-3).
	\begin{enumerate}
		\item[(B-4)] $\mathcal{A}_{H_n^s}(x_i), \mathcal{A}_{H_n^s}(y_j) < nC - \delta (r_1 +1 + 2\delta)$.
	\end{enumerate}
	
	Let $b_4$ be the number of $i \in \{1, \dots, N\}$ and $j \in \{1, \dots, M\}$ satisfying (B-4). 
	Then, we have 
	\[b_{2\epsilon}(n) \leq b_1 \leq b_2 \leq b_3 \leq b_4.\]
	
	Now, we recall that $CF(H_n^s)$ admits an orthogonal basis other than $\mathbb{B}$. 
	The other orthogonal basis is the set of $1$-periodic orbits of $X_{H_n^s}$, i.e., $\mathcal{P}_1(H_n^s)$. 
	Then, by Lemma \ref{lem one to one map}, there is a one-to-one map preserving the action values between $\mathbb{B}$ and $\mathcal{P}_1(H_n^s)$. 
	
	Let $b_5$ denote the number of $\gamma \in \mathcal{P}_1(H_n^s)$ satisfying the following (B-5):
	\begin{enumerate}
		\item[(B-5)] $\mathcal{A}_{H_n^s}(\gamma) < nC - \delta (r_1 +1+2\delta)$.
	\end{enumerate}
	Then, the one-to-one map between two orthogonal bases of $CF(H_n^s)$ shows that 
	\[b_{2\epsilon}(n) \leq b_1 \leq b_2 \leq b_3 \leq b_4 \leq b_5.\]
	
	Finally, let us consider the map $O_{H_n^s} : \mathcal{P}_1(H_n^s) \to \Delta \cap \Gamma_{H_n^s}$.
	Then, one can observe that, for any $\gamma \in \mathcal{P}_1(H_n^s)$ satisfying (B-5), there exists $x \in W_{r_1 - \delta}$ such that $O_{H_n^s}(\gamma) = (x,x) \in \Delta \cap \Gamma_{H_n^s}$, by Lemma \ref{lem energy bound}. 
	In other words, 
	\[(x,x) \in \Delta \cap \Gamma_{H_n^s} \cap \left(W_{r_1 -\delta} \times W_{r_1 -\delta}\right).\]
	
	We recall that $N_n(S)$ is defined as 
	\[N_n(S):= \left|\Delta \cap \Gamma_{H_n^s} \cap \left(W_{r_1 -\delta} \times W_{r_1 -\delta}\right)\right|.\]
	Thus, 
	\begin{gather}
		\label{eqn proof 5}
		b_{2\epsilon}(n) \leq b_1 \leq b_2 \leq b_3 \leq b_4 \leq b_5 \leq N_n(S).
	\end{gather}
	
	We note that $\epsilon$ is fixed, $\delta$ is also fixed based on the choice of $\epsilon$, and inequalities in \eqref{eqn proof 5} hold for any $n \in \mathbb{N}$.
	Thus, by combining \eqref{eqn proof 4} and \eqref{eqn proof 5}, for any $\epsilon$, we have
	\begin{gather*}
		\hbar_{2\epsilon} = \limsup_{n \to \infty} \frac{1}{nT} \log b_{2\epsilon}(n) \leq \limsup_{n \to \infty} \frac{1}{nT} \int_{B^d_\delta} N_n(S) ds \leq h_{top}(\mu).
	\end{gather*}
	Also, by taking the limit on the both sides of an inequality $\hbar_{2\epsilon} \leq h_{top}(\mu)$, we have
	\[\hbar:=\lim_{\epsilon \searrow 0} \hbar_\epsilon \leq h_{top}(\mu).\]
	In other words, Theorem \ref{thm vs topological entropy formal} holds. 
\end{proof}

\bibliographystyle{amsalpha}
\bibliography{references}
\end{document}